\documentclass{amsart}
\usepackage{amssymb}
\usepackage{amsfonts}
\usepackage{amsmath}
\usepackage[usenames,dvipsnames]{color}
\usepackage{tikz}
\usetikzlibrary{matrix,arrows,decorations.pathmorphing}
\usepackage{amsfonts}
\usepackage{amsthm}
\usepackage{enumerate}
\usepackage{mathrsfs}
\usepackage{bbm} 
\usepackage{draftwatermark}
\SetWatermarkScale{4.5}
\SetWatermarkLightness{1}
\SetWatermarkText{DRAFT$V2-E7$}
\DeclareMathAlphabet{\mathpzc}{OT1}{pzc}{m}{it}
\newtheorem{theorem}{Theorem}[section]

\newtheorem{corollary}[theorem]{Corollary}

\newtheorem{example}[theorem]{Example}

\newtheorem{lemma}[theorem]{Lemma}

\newtheorem{proposition}[theorem]{Proposition}
\newtheorem{remark}[theorem]{Remark}

\theoremstyle{definition}
\newtheorem{definition}[theorem]{Definition}

\newtheoremstyle{named}{}{}{\itshape}{}{\bfseries}{.}{.5em}{#1 \thmnote{#3}}
\theoremstyle{named}

\providecommand{\JOin}{\ensuremath{\bigvee}}

\providecommand{\Z}{\ensuremath{\mathbb{Z}}}
\providecommand{\N}{\ensuremath{\mathbb{N}}}

\providecommand{\weak}{\ensuremath{\text{weak}}}

\providecommand{\pleadsto}{\underset{p}{\leadsto}}

\title{Bounded Topological Speedups}

\author[L.\ Alvin]{Lori Alvin}
\address{Department of Mathematics, Bradley University, 1501 W. Bradley Ave., Peoria, IL 61625}
\email{lalvin@bradley.edu}

\author[D.\ Ash]{Drew D. Ash}
\address{Department of Mathematics and Computer Science, Davidson College, 209 Ridge Rd, Davidson, NC 28035}
\email{drash@davidson.edu}

\author[N.\ Ormes] {Nicholas S. Ormes}
\address{Department of Mathematics, University of Denver, 2280 S. Vine Street, Denver, CO 80208}
\email{normes@du.edu}

\keywords{Topological speedups, odometers, substitutions, Kakutani-Rokhlin towers, entropy}
\subjclass[2010]{Primary 37B05; Secondary 37A20, 37A25, 37B10, 37B40, 54H20}

\begin{document}
\begin{abstract}

This paper explores the range of bounded speedups in the topological category. Bounded
speedups represent both a strengthening of topological speedups as defined in \cite{Ash} and a 
generalization of powers of a transformation. 
Here we show that bounded speedups preserve the structure of two classical minimal Cantor systems. Specifically, a minimal bounded speedup of an odometer is a conjugate odometer, and a minimal bounded speedup of a primitive substitution is again a primitive substitution, though it is never conjugate to the original substitution system. Further, we give bounds on the topological entropy of bounded speedups, and in special cases we compute the topological entropy of bounded speedups.
\end{abstract}
\maketitle
\section{Introduction}
Given a minimal Cantor system $(X,T)$, a \emph{topological speedup} of $(X,T)$ is any dynamical system topologically conjugate to $S:X\rightarrow X$ where $S$ is a homeomorphism of the form $S(x)=T^{p(x)}(x)$ 
for some $p:X\rightarrow\Z^{+}$. In \cite{Ash}, the second author 
characterized the pairs of minimal Cantor systems $(Y,S)$ and $(X,T)$ where $S$ is a speedup of $T$. 
Here we investigate the more restrictive situation, where $S$ is a speedup of $T$ via a 
uniformly bounded (equivalently a continuous) jump function $p$.
The notion of a speedup is closely tied to the notion of \emph{orbit equivalence}. Two dynamical systems $(X,T)$ and $(Y,S)$ are orbit equivalent if, up to conjugacy, 
every $S$-orbit is equal to a $T$-orbit. 

For both orbit equivalence and speedups, results in the measure-theoretic category preceded topological results. In \cite{Dye}, Dye proved that any two ergodic automorphisms of Lebesgue probability spaces are measurably orbit equivalent. In a similar vein, Arnoux, Ornstein, and Weiss showed that every aperiodic automorphism on a Lebesgue probability space is measurably conjugate to a speedup of any ergodic automorphism \cite{AOW}. More restrictive versions 
of both orbit equivalence and speedups yield more specific results. In particular, 
Belinskaya proved that if orbit equivalence is with an integrable jump function then the systems are flip conjugate \cite{Bel} (i.e., the systems are conjugate or one is conjugate to the inverse of the other). Similarly for speedups, in \cite{Neveu2} Neveu computes the entropy of integrable speedups by proving an extension of Abramov's formula.

In the topological category, the most fundamental results about orbit equivalence concern minimal Cantor systems (homeomorphisms $T:X \to X$ where $X$ is a Cantor space and all $T$-orbits are dense). Giordano, Putnam, and Skau proved that orbit equivalence for these systems is completely characterized by an associated unital ordered dimension group, and moreover
two minimal Cantor systems $(X,T)$ and $(Y,S)$ are orbit equivalent if and only if there is a homeomorphism $f:X \to Y$ which carries the simplex $M(X,T)$ of $T$-invariant Borel measures on $X$ to $M(Y,S)$, the simplex of $S$-invariant Borel measures on $Y$ \cite{GPS}.
As shown in \cite{Ash}, these same invariants are relevant to the characterization of 
pairs of minimal Cantor systems where one is a speedup of the other. 
In particular, one minimal Cantor system $(Y,S)$ is a speedup of another $(X,T)$ if and only if an exhaustive surjection of unital ordered dimension groups exists, or equivalently if there is a homeomorphism from $X$ to $Y$ which 
provides an injection of $M(X,T)$ into $M(Y,S)$. It follows easily that orbit equivalent
minimal systems are speedups of one another; the converse remains an open problem. 

For topological orbit equivalence, natural relations arise from assuming continuity properties of the jump function. Boyle proved that two minimal Cantor systems related by an orbit equivalence with a bounded jump function are flip conjugate, providing a topological analog to Belinskaya's result \cite{Bthesis}. The notion of strong orbit equivalence, where the jump functions may have a single point of discontinuity, turns out to be extremely relevant. Two minimal Cantor systems are strongly orbit equivalent if and only if their associated $C^*$-cross products are isomorphic \cite{GPS}. 

In this paper we take up the study of speedups in the case where the jump function is bounded, i.e. \emph{bounded speedups}. Note that a constant power of a transformation is a bounded speedup, e.g. $(X,T^2)$ is a speedup of $(X,T)$. 
Therefore results about bounded speedups capture powers as well. 

The results in this paper generally demonstrate how invariants of $(X,T)$ such as entropy, the space of invariant measures, and the dimension group can change through a bounded speedup. However, we also show that for two well-known families of minimal Cantor systems, odometers and substitution systems, there is less freedom. A minimal bounded speedup of an odometer must be a conjugate odometer (Theorem \ref{sameodom}). A minimal bounded speedup of a minimal substitution system on a Cantor set must be another substitution system (Theorem \ref{subspeed}). 


The structure of the paper is as follows. In Section $2$ we establish some basic properties of bounded topological speedups. We conclude Section $2$ by bounding, both above and below, the topological entropy of bounded topological speedups. This theorem can be thought of as a topological version of Neveu's entropy theorem, as the bounds are in terms of integrating the jump function against various sets of invariant measures. The results do not follow directly from Neveu because the space of invariant measures for a bounded speedup $S$ may be strictly larger than that of the original $T$. 

In Section $3$ we begin our examination of structural properties preserved by bounded topological speedups with odometers. The main result of this section is showing that a minimal bounded speedup of an odometer is a conjugate odometer. Moreover, we give explicit criterion for not only when one can minimally speedup an odometer, but also give a precise description of the form the jump function must take. 

In Section $4$ we switch our focus to minimal substitution systems and achieve a comparable result, albeit with a noticeable difference. A minimal bounded topological speedup of a minimal substitution system is again a minimal substitution system, however, this new substitution system is never conjugate to the original substitution system (in fact, no speedup of an expansive system can be conjugate to the original). Along the way, we provide examples that show bounded topological speedups are, in fact, a strict generalization of powers of a transformation even in the case of substitution systems. 

\section{Bounded Topological speedups and Topological Entropy}
\subsection{Structure of the jump function}
Although many notions here apply more generally, 
we will focus on topological dynamical systems which are 
\emph{minimal Cantor systems}.
By a minimal Cantor system we mean a pair $(X,T)$ where $X$ is a Cantor space 
and $T:X \to X$ is a homeomorphism where every $T$-orbit is dense. 
\begin{definition}
	Let $(X,T)$ be a minimal Cantor system. 
	A \emph{bounded speedup} of $(X,T)$
	is a homeomorphism of the form $(X,S)$ where 
	 $$
	 S(x)=T^{p(x)}(x),
	 $$
	 for some bounded function $p:X\rightarrow\Z^{+}$, or any system topologically conjugate
	to such an $(X,S)$. We will use the notation $T \leadsto S$ to denote when $S$ is a bounded speedup of $T$, 
	and $T \underset{p}{\leadsto} S$ when $S$ is a bounded speedup of $T$ with jump function $p$. 
\end{definition}
Throughout the paper when we say $S$ is a bounded 
speedup of $T$ we will typically assume (without loss of generality) that 
$S$ is of the form $S(x) = T^{p(x)}(x)$ as opposed to a conjugate version of such a map. 

Note that the definition above does not imply that the system $(X,S)$ is minimal, only aperiodic. If all 
$S$-orbits are dense in $X$, then we will say that $(X,S)$ is a \emph{minimal bounded speedup} of $(X,T)$.

It was shown in \cite{Ash} that the function $p$ associated to any speedup is lower semicontinuous. Below we see that $p$ is bounded if and only if $p$ is continuous. 
\begin{proposition}
	Let $p:X\rightarrow\mathbb{Z}^{+}$ and suppose that $S(x)=T^{p(x)}(x)$ defines a speedup of the minimal Cantor system $(X,T)$, then $p$ is bounded if and only if $p$ is continuous.
\end{proposition}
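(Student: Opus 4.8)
The plan is to prove the two implications separately. The direction ``continuous $\Rightarrow$ bounded'' is essentially immediate, so the content lies entirely in ``bounded $\Rightarrow$ continuous,'' and there I would exploit the hypothesis that $S$ itself is a homeomorphism, together with the aperiodicity of a minimal Cantor system.

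For the easy direction, I would observe that $\mathbb{Z}^{+}$ carries the discrete topology, so if $p$ is continuous then each level set $\{x : p(x)=n\}$ is clopen. These level sets form an open cover of the compact space $X$, hence only finitely many are nonempty, and $p$ is bounded. (Equivalently, $p(X)$ is a compact, therefore finite, subset of the discrete space $\mathbb{Z}^{+}$.)

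For the converse I would argue sequentially, since $X$ is metrizable. Fix $x_{k}\to x_{0}$; the goal is to show $p(x_{k})\to p(x_{0})$. Because $p$ is bounded, the integer sequence $(p(x_{k}))$ takes only finitely many values, so it suffices to check that any value attained infinitely often equals $p(x_{0})$. Suppose $p(x_{k_{j}})=m$ along some subsequence. Continuity of $T^{m}$ gives $T^{m}(x_{k_{j}})\to T^{m}(x_{0})$, while continuity of $S$ (here is where I use that $S$ is a homeomorphism) gives $S(x_{k_{j}})\to S(x_{0})$. But $S(x_{k_{j}})=T^{p(x_{k_{j}})}(x_{k_{j}})=T^{m}(x_{k_{j}})$, so passing to the limit yields $T^{m}(x_{0})=S(x_{0})=T^{p(x_{0})}(x_{0})$, whence $T^{\,m-p(x_{0})}(x_{0})=x_{0}$. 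Since $(X,T)$ is a minimal Cantor system its space is infinite and hence $T$ is aperiodic, so $x_{0}$ is not periodic and $m=p(x_{0})$. Therefore the only value attained infinitely often by $(p(x_{k}))$ is $p(x_{0})$, forcing $p(x_{k})=p(x_{0})$ for all large $k$, and $p$ is continuous.

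The part that makes the hypotheses pull their weight is the reduction to a constant subsequence: boundedness is exactly what prevents the values $p(x_{k})$ from escaping to infinity and is what lets me run the limit argument at all. The assumption that $S$ is a genuine homeomorphism (rather than merely the lower semicontinuity of $p$ recorded above) is what supplies the identity $\lim_{j} S(x_{k_{j}})=S(x_{0})$. I note that this argument does not actually require the lower semicontinuity from \cite{Ash}; it delivers continuity directly. Should one prefer to lean on that result instead, it would suffice to prove only upper semicontinuity, i.e.\ that $\{x:p(x)\le n\}$ is open for each $n$, and the identical limit computation shows each $\{x:p(x)\ge m\}$ is closed, which together with lower semicontinuity makes every level set clopen.
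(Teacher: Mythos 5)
Your proof is correct and follows essentially the same route as the paper: both reduce to the limit computation $S(x)=\lim_j S(x_{k_j})=\lim_j T^{m}(x_{k_j})=T^{m}(x_0)$, the paper phrasing it as closedness (hence clopenness) of the finitely many level sets $p^{-1}(\{z_i\})$ while you phrase it as sequential convergence of $p(x_k)$ to $p(x_0)$. Your write-up in fact makes explicit the aperiodicity step ($T^{m-p(x_0)}(x_0)=x_0$ forces $m=p(x_0)$) that the paper leaves implicit in the conclusion ``thus $p(x)=z_i$.''
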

\begin{proof}
	The converse is clear, and thus we only show the necessary condition. Suppose $p$ is bounded, then
	$$
	p(X)=\{z_{1},\dots,z_{n}\}
	$$
	for some $n\in\Z^{+}$. 
	The sets $p^{-1}(\{z_{i}\})$ form a finite partition of $X$. Therefore, if we
	show that each such set is closed it will follow that each is open as well, completing the proof. 
	
	Let $\{x_n\}_{n=1}^{\infty}$ be a sequence of points in $p^{-1}(\{z_{i}\})$ which 
	converge to a point $x$. 
	Then $$S(x) = \lim_{n \to \infty} S(x_n) = \lim_{n \to \infty} T^{z_i}(x_n) = T^{z_i}(x)$$
	thus $p(x) = z_i$. 
\end{proof}

The \emph{orbit} of a point $x$ for a homeomorphism $T:X \to X$ is the set
$\{ T^j x : j \in \mathbb{Z} \}$. 
We define an \emph{orbit block} of length $n$ with respect to the 
point $x$ and map $T$ to be the following set
$$
\mathcal{O}(T,x,n)=\{x,Tx,\dots,T^{n-1}x\}.
$$
\begin{lemma}
	Let $(X,T)$ be a minimal Cantor system and suppose $S$ is a bounded speedup of $T$. 
	There is a constant $c \in \mathbb{Z}^+$ such that 
	every $T$-orbit is the union of exactly $c$ different $S$-orbits. 
\end{lemma}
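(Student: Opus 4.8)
The plan is to pass from the dynamics on $X$ to the combinatorics of a single $T$-orbit, viewed as a copy of $\Z$, and then recover uniformity across orbits from minimality. First I would record two preliminary facts. Since $p(x)\ge 1$, both $S$ and $S^{-1}$ move a point within its own $T$-orbit, so each $S$-orbit is contained in a $T$-orbit and every $T$-orbit is a disjoint union of $S$-orbits. Also, because $(X,T)$ is an infinite minimal system it has no periodic points, so fixing $x\in X$ and writing $j\leftrightarrow T^{j}x$ identifies the $T$-orbit of $x$ with $\Z$. Under this identification the homeomorphism $S$ restricts to a bijection $\sigma\colon\Z\to\Z$, $\sigma(j)=j+p(T^{j}x)$, satisfying $j<\sigma(j)\le j+M$, where $M=\max p$ is finite by the Proposition (bounded $p$ is continuous, hence finitely valued).

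Next I would count the $S$-orbits inside this single $T$-orbit by a crossing-number argument. The key structural observation is that, although $\sigma$ need not be monotone on $\Z$, along any single $\sigma$-orbit the iterates $\sigma^{k}(j)$ are strictly increasing in $k$ (each step adds a positive amount) and tend to $\pm\infty$; hence each $\sigma$-orbit, listed in increasing order, is a bi-infinite strictly increasing sequence. Consider the barrier between $-1$ and $0$ and the count
\[
N(x)=\#\{\,j\in\Z : j\le -1 \text{ and } \sigma(j)\ge 0\,\}.
\]
Because $\sigma(j)\le j+M$, only $j\in\{-M,\dots,-1\}$ can contribute, so $N(x)$ is finite, with the explicit formula $N(x)=\#\{\,j\in\{-M,\dots,-1\}:p(T^{j}x)\ge -j\,\}$, which lies between $1$ and $M$. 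I claim $N(x)$ equals the number of $S$-orbits in the $T$-orbit of $x$: each $\sigma$-orbit, being a strictly increasing bi-infinite sequence, has exactly one index whose value is $\le -1$ and whose successor is $\ge 0$, and conversely every $j$ counted in $N(x)$ lies in a unique $\sigma$-orbit. This simultaneously yields the finiteness of the count and the clean formula.

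Finally I would upgrade this to a statement uniform over $X$. The formula for $N(x)$ exhibits it as a finite sum of indicators $\mathbbm{1}[\,p(T^{j}x)\ge -j\,]$; since $p$ is continuous (bounded, by the Proposition) and each $T^{j}$ is a homeomorphism, each such set is clopen, so $N\colon X\to\{1,\dots,M\}$ is locally constant, i.e.\ continuous. Moreover $N$ depends only on the $T$-orbit of $x$, so it is $T$-invariant. A continuous $T$-invariant function has clopen $T$-invariant level sets, and by minimality the only nonempty closed $T$-invariant subset of $X$ is $X$ itself; hence $N$ is constant, say equal to $c\in\{1,\dots,M\}$. This $c$ is the desired constant, since by construction every $T$-orbit splits into exactly $N(x)=c$ distinct $S$-orbits.

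The main obstacle, and the heart of the argument, is the passage from a single orbit to a global constant: establishing that the orbit-count is finite and, more importantly, that it is the same for every $T$-orbit. The crossing-number representation is what makes this tractable, as it converts the count into a finite, manifestly locally constant expression in $p$, after which continuity of $p$ (guaranteed by boundedness via the Proposition) together with minimality forces the value to be constant. By contrast, the containment of $S$-orbits in $T$-orbits and the aperiodicity of $T$ are routine.
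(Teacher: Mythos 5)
Your proof is correct and takes essentially the same approach as the paper: your crossing count $N(x)=\#\{\,j\in\{-M,\dots,-1\}:p(T^{j}x)\ge -j\,\}$ is exactly the paper's set $S^{-1}\mathcal{O}(T,x,M)\setminus\mathcal{O}(T,x,M)$, and the paper likewise concludes by noting that $c$ is determined by the values of $p$ at $T^{-M}x,\dots,T^{-1}x$, hence continuous, $T$-invariant, and constant by minimality. You merely make explicit (via the strictly increasing bi-infinite orbit sequences) the bijection between $S$-orbits and barrier crossings that the paper leaves implicit.
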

\begin{proof}
	Let $c(x)$ denote the cardinality of the set of 
	distinct $S$-orbits that are a subset of the $T$-orbit of $x$.
	
	As the jump function $p$ for $T \pleadsto S$ is bounded, we may set $M= \sup_{x\in X} p(x)$ and fix $x \in X$. 
	Then each $S$-orbit that is a subset of the $T$-orbit of $x$
	must intersect the set $\mathcal{O}(T,x,M+1)$. 
	It follows that the number of distinct $S$-orbits that are a subset of the $T$-orbit of $x$
	is equal to the cardinality of $S^{-1}\mathcal{O}(T,x,M) \setminus \mathcal{O}(T,x,M)$. Therefore, $c$ is finite, so $c:X \to \mathbb{Z}^+$ is a well-defined function. The function $c$ is $T$-invariant by definition. It remains to show that $c$ is continuous. 
	
	Note that value of $c$ can be determined from the knowledge of the values of 
	$p$ at the points $T^{-M}(x), T^{-M+1}(x), \ldots, T^{-1}(x)$.
	Therefore, since $p$ is continuous, $c$ is continuous.  As $c$ is continuous and $T$-invariant, and $T$ is minimal we conclude that $c$ is constant. 
\end{proof}
We call $c$ the \emph{orbit number} for $T\rightsquigarrow S$ if $c$ is the constant from the previous lemma. 

For a topological dynamical system $(X,T)$ we say that a continuous function 
$g:X \to \mathbb{R}$ is a $T$-\emph{coboundary} if there exists another continuous function 
$f:X \to \mathbb{R}$ such that $g = f - f \circ T$.
We will use the following theorem due to Gottschalk and Hedlund to show that the jump function is the orbit number associated to the speedup plus a $T$-coboundary.

\begin{theorem}[Gottschalk $\&$ Hedlund] \label{GH}
	Let $T$ be a minimal transformation of the compact metric space $X$, and $g\in C(X)$. The following are equivalent:
	\begin{enumerate}
		\item $g=f-f\circ T,$ for some $f\in C(X)$
		\item There exists $x_{0}\in X$ for which
		$$
		\sup_{n}\left|\sum_{j=0}^{n-1}g\circ T^{j}(x_{0})\right|<\infty.
		$$
	\end{enumerate}
\end{theorem}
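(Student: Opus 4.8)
The implication $(1)\Rightarrow(2)$ is the routine direction and I would dispatch it first: if $g = f - f\circ T$, then the partial sums telescope, $\sum_{j=0}^{n-1} g\circ T^{j}(x_{0}) = f(x_{0}) - f(T^{n}x_{0})$, which is bounded in absolute value by $2\|f\|_{\infty}$ because $X$ is compact and $f$ is continuous. Hence $(2)$ in fact holds at \emph{every} point $x_{0}$. All the content is in the converse.

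For $(2)\Rightarrow(1)$, the plan is to realize $f$ as an invariant graph in a skew product. Writing $S_{n}(x) = \sum_{j=0}^{n-1} g\circ T^{j}(x)$, I would introduce the homeomorphism $F\colon X\times\R\to X\times\R$, $F(x,t) = (Tx,\,t-g(x))$, whose iterates satisfy $F^{n}(x,t) = (T^{n}x,\,t - S_{n}(x))$. The hypothesis says precisely that the forward orbit $\{F^{n}(x_{0},0) = (T^{n}x_{0},\,-S_{n}(x_{0})) : n\ge 0\}$ remains inside the compact cylinder $X\times[-M,M]$, where $M=\sup_{n}|S_{n}(x_{0})|$. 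Let $\Omega$ be the closure of this forward orbit; it is compact and forward invariant, $F(\Omega)\subseteq\Omega$. The chain $\Omega\supseteq F(\Omega)\supseteq F^{2}(\Omega)\supseteq\cdots$ is nested, so its core $\Omega_{\infty}=\bigcap_{n\ge 0}F^{n}(\Omega)$ is a nonempty compact set satisfying $F(\Omega_{\infty})=\Omega_{\infty}$, and a Zorn's lemma argument produces a nonempty closed minimal $F$-invariant set $Y\subseteq\Omega_{\infty}\subseteq X\times[-M,M]$.

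Two properties of $Y$ drive the argument. First, since $\pi\circ F = T\circ\pi$ for the first-coordinate projection $\pi$, the image $\pi(Y)$ is a nonempty closed $T$-invariant set, hence $\pi(Y)=X$ by minimality of $(X,T)$. Second, and this is the crux, I would show $\pi|_{Y}$ is injective, so that $Y$ is the graph of a function $f\colon X\to\R$. This injectivity is the main obstacle, and the key is that the vertical translations $R_{s}(x,t)=(x,t+s)$ commute with $F$. If two points $(x,t_{1}),(x,t_{2})\in Y$ had $s=t_{2}-t_{1}>0$, then $R_{s}(Y)$ is another closed $F$-invariant set, and $(x,t_{2})\in Y\cap R_{s}(Y)$; minimality of $Y$ then forces $Y\subseteq R_{s}(Y)$, whence $(x,t-ns)\in Y$ for every $n\ge 0$, contradicting $Y\subseteq X\times[-M,M]$. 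Thus $\pi|_{Y}$ is a continuous bijection from a compact space onto a Hausdorff space, hence a homeomorphism, and $f$ (the second coordinate composed with $(\pi|_{Y})^{-1}$) is continuous.

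Finally, invariance of the graph closes the proof: from $(x,f(x))\in Y$ and $F(Y)=Y$ we get $(Tx,\,f(x)-g(x))\in Y$, so $f(Tx)=f(x)-g(x)$, that is $g = f - f\circ T$ with $f\in C(X)$, establishing $(1)$. I expect the only delicate points to be the verification that $\Omega_{\infty}$ is genuinely $F$-invariant (the nested-intersection computation) and the translation–minimality step showing $Y$ is a graph; the remaining steps are bookkeeping.
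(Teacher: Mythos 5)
The paper does not prove this statement at all: Theorem~\ref{GH} is quoted as a classical result of Gottschalk and Hedlund and then used as a black box (in Lemma~\ref{Nic Lemma} and Theorem~\ref{sameodom}), so there is no in-paper proof to compare against. Your argument is the standard skew-product proof of the Gottschalk--Hedlund theorem, and it is correct. The easy direction telescopes exactly as you say. In the converse, your sign conventions are consistent: with $F(x,t)=(Tx,\,t-g(x))$ the forward orbit of $(x_0,0)$ lies in $X\times[-M,M]$, the minimal set $Y$ projects onto $X$ by minimality of $T$ (note $\pi(Y)$ is closed because $Y$ is compact, and $T$-invariant since $\pi\circ F=T\circ\pi$), and the graph relation $(Tx,\,f(x)-g(x))\in Y$ yields $g=f-f\circ T$ with the stated orientation. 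The crux --- injectivity of $\pi|_Y$ --- is handled correctly: $R_s$ commutes with $F$, so $Y\cap R_s(Y)$ is a nonempty closed $F$-invariant subset of $Y$, minimality gives $Y\subseteq R_s(Y)$, and iterating $R_{-s}$ produces points $(x,\,t-ns)\in Y$ escaping every horizontal strip, contradicting compactness; equivalently you could invoke the fact that two minimal sets which intersect must coincide. Two small economies: the intermediate set $\Omega_\infty=\bigcap_{n\ge0}F^n(\Omega)$ is dispensable, since applying Zorn's lemma directly to the nonempty compact forward-invariant subsets of $\Omega$ yields a minimal $Y$ for which $F(Y)=Y$ is automatic ($F(Y)$ is itself nonempty, compact and forward invariant, hence equals $Y$); and if you do keep $\Omega_\infty$, the equality $F(\Omega_\infty)=\Omega_\infty$ uses injectivity of $F$ (or a compactness argument), which holds here because $F$ is a homeomorphism of $X\times\mathbb{R}$. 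Neither point is a gap.
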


\begin{lemma}\label{Nic Lemma}
	Let $(X,T)$ be a minimal Cantor system. Suppose $T \underset{p}{\leadsto} S$ with orbit 
	number $c$. Then there is an $f \in C(X, \mathbb{Z})$ 
	such that $p(x) = c + f(x) - fT(x)$ for all $x \in X$. 
\end{lemma}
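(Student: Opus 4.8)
The plan is to apply the Gottschalk--Hedlund theorem (Theorem \ref{GH}) to the continuous function $g = p - c$, where $c$ is the orbit number associated to $T \pleadsto S$. Since $p$ takes values in $\Z^+$ and $c \in \Z^+$ is constant, $g$ is continuous and integer-valued. The goal of Lemma \ref{Nic Lemma} is exactly to express $g$ as a $T$-coboundary $f - f\circ T$, so by Gottschalk--Hedlund it suffices to exhibit a single point $x_0 \in X$ for which the partial sums $\sum_{j=0}^{n-1} g\circ T^j(x_0) = \sum_{j=0}^{n-1} \bigl(p(T^j x_0) - c\bigr)$ are bounded uniformly in $n$. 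The integrality of $g$ will then also be used to upgrade the resulting $f \in C(X,\R)$ to one in $C(X,\Z)$.

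\textbf{Bounding the partial sums.} The key observation is a dynamical interpretation of $\sum_{j=0}^{n-1} p(T^j x_0)$. Because $S(x) = T^{p(x)}(x)$, iterating $S$ starting at $x_0$ moves along the $T$-orbit by successive jumps of size $p$: one checks that $S^k(x_0) = T^{\,P_k}(x_0)$ where $P_k = \sum_{j=0}^{k-1} p(S^j x_0)$. So the natural partial sum to control is the one along the $S$-orbit, and I expect to choose $x_0$ so that the $T$-orbit block $\mathcal{O}(T,x_0,n)$ is covered in a controlled way by exactly $c$ interleaved $S$-orbit segments. Intuitively, as we run $T$ forward through $n$ consecutive points, each of the $c$ distinct $S$-orbits inside the $T$-orbit of $x_0$ contributes roughly $n/c$ of those points, so the cumulative excess of $p$ over $c$ should stay bounded by a constant depending only on $M = \sup p$ and $c$, not on $n$.

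\textbf{The main obstacle.} The difficulty is making the counting argument precise: I need to show that the discrepancy between the number of points of $\mathcal{O}(T,x_0,n)$ that lie in a given block of iterates and the ``expected'' number is bounded independently of $n$. Concretely, if I track the positions $T^0 x_0, T^1 x_0, \ldots, T^{n-1} x_0$ and mark which $S$-orbit each belongs to, the sum $\sum_{j=0}^{n-1}(p(T^j x_0) - c)$ measures how far ahead or behind the $S$-dynamics runs relative to the uniform rate $c$. The boundedness comes from the fact that the $c$ distinct $S$-orbits partition the $T$-orbit, so along any window of length $n$ the counts from the $c$ orbits can differ from one another by at most a bounded amount governed by $M$; this forces the telescoping-style partial sum to be bounded, and boundary effects at the two ends of the window contribute only $O(M)$. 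Once boundedness is established for some $x_0$, Gottschalk--Hedlund yields a continuous $f$ with $p - c = f - f\circ T$.

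\textbf{Integrality of $f$.} Finally I would argue that $f$ may be taken integer-valued. Since $p - c$ is integer-valued, the relation $f - f\circ T = p - c$ shows that $f - f\circ T \in \Z$ everywhere, so $f \bmod 1$ is $T$-invariant; by minimality of $T$ and continuity of $f$, the function $f \bmod 1$ is constant. Subtracting that constant from $f$ (which does not affect the coboundary $f - f\circ T$) produces the desired $f \in C(X,\Z)$ with $p(x) = c + f(x) - f(T(x))$, completing the proof.
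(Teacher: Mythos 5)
Your overall route is the same as the paper's: apply Theorem \ref{GH} to $g=p-c$ at a single point, and bound the partial sums by decomposing an orbit block $\mathcal{O}(T,x_0,n)$ into the $c$ interleaved $S$-orbit segments with $O(M)$ boundary effects. Your closing mod-$1$ argument that $f$ may be taken in $C(X,\Z)$ is correct (and addresses a point the paper leaves implicit). The genuine gap is in the middle step: you justify the boundedness of $\sum_{j=0}^{n-1}\bigl(p(T^jx_0)-c\bigr)$ by claiming that each of the $c$ $S$-orbits contributes roughly $n/c$ of the points of the window, and that the per-orbit \emph{counts} can differ by at most an amount governed by $M$. Both claims are false. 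Concretely, take the $3$-adic odometer and let $p$ be constant on the three floors of $\mathcal{P}(1)$ with values $3,1,2$ on floors $0,1,2$ respectively; this $S$ is a bounded speedup homeomorphism with orbit number $c=2$, in which one $S$-orbit occupies the $T$-orbit indices $\equiv 0 \bmod 3$ (jumping by $3$) and the other occupies the remaining indices (jumping $1,2,1,2,\dots$). In a window of length $n$ the two counts are about $n/3$ and $2n/3$, diverging linearly in $n$, even though the conclusion of the lemma holds there ($p$ averages exactly $2$). The count of a segment inside a window is roughly $n$ divided by that orbit's average jump size, and nothing equalizes these averages across the $c$ orbits, so count-based bookkeeping cannot close the argument.

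What the paper bounds instead is each segment's \emph{displacement}. Fix $n>2M$ and let $x_k$ ($0\le k<c$) be the first point of the $k$th $S$-orbit occurring in $x_0,Tx_0,\dots,T^{n-1}x_0$, so $x_k=T^jx_0$ with $0\le j<M$, and let $N_k$ be least with $S^{N_k}x_k\notin\mathcal{O}(T,x_0,n)$. Because $p>0$, a segment never re-enters the window, so the points $S^jx_k$, $0\le j<N_k$, $0\le k<c$, enumerate $\mathcal{O}(T,x_0,n)$ exactly, and the sum of $p$ along the $k$th segment telescopes to the integer $s_k$ with $T^{s_k}x_k=S^{N_k}x_k$. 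Since the entry point lies within $M$ of the left edge of the window and the exit point within $M$ of the right edge, $|s_k-n|<M$, whence $\bigl|\sum_{j=0}^{n-1}p(T^jx_0)-cn\bigr|=\bigl|\sum_{k=0}^{c-1}(s_k-n)\bigr|<cM$ uniformly in $n$, and Theorem \ref{GH} applies. Your phrase \lq\lq telescoping-style partial sum\rq\rq\ gestures at exactly this, but to repair the proof you must replace the equidistribution-of-counts claim with this displacement accounting: it is the displacements $s_k$, not the counts $N_k$, that are forced to be within $M$ of $n$.
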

\begin{proof}
	Let $M = \sup_{x \in X} p(x)$. Fix $x_0 \in X$ and $N> 2 M$. 
	We know that the $T$-orbit of $x_0$ is the union of exactly $c$ $S$-orbits
	and that each such $S$-orbit intersects the orbit block $\mathcal{O}(T,x_{0},N)$.
	Let $x_0, x_1, \ldots, x_{c-1}$ be the first elements of the $c$ different 
	$S$-orbits that occur in the sequence $ x_0, T(x_0), \ldots, T^{N-1}(x_0)$. 
	For each $0 \leq k <c$, let $N_k$ be the smallest natural number such that 
	$S^{N_k}(x_k)$ is not in $\mathcal{O}(T,x_{0},N)$.
	Then 
	$$
	\sum_{j=0}^{N-1} pT^j(x) - c N = \sum_{k=0}^{c-1} \sum_{j=0}^{N_k-1} pS^j(x_k) - c N
	= \sum_{k=0}^{c-1} \sum_{j=0}^{N_k-1} \left( pS^j(x_k) - N \right).
	$$
	Note that the sum $s_k = \sum_{j=0}^{N_k-1} pS^j(x_k)$ is exactly the number satisfying
	$T^{s_k}(x_k) = S^{N_k}(x_k)$. Since $x_k = T^j(x_0)$ for $j \in [0,M)$ and 
	$S^{N_k}(x_k) = T^j(x_0)$ for $j \in [N,N+M)$, we see $N-M< s_k < N+M$
	and $s_k-N \in [-M,M]$ for each $k$. 
	Therefore, the sum above is bounded between $-cM$ and $cM$. 
\end{proof}

When $S$ is a bounded speedup of $T$, a key question will be whether the jump function 
is also a constant plus an $S$-coboundary. The following proposition demonstrates the consequences of this condition. 

\begin{proposition} \label{Scob}
Let $(X,T)$ be a minimal Cantor system, and suppose
$T \pleadsto S$ where $S:X \to X$ is minimal and $p(x) = c + gS(x) - g(x)$. 
Then the function $h(x) = T^{-g(x)}(x)$ provides a factor map 
from $(X,S)$ onto the minimal Cantor system $(h(X),T^c)$. Furthermore, 
if $T^c$ is a minimal action of $X$ then $(X,S)$ is conjugate to $(X,T^c)$. 
\end{proposition}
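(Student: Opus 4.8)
The plan is to verify that the explicit map $h(x) = T^{-g(x)}(x)$ does everything at once: it is the advertised factor map, and once $T^c$ is minimal on all of $X$ it is actually the conjugacy. First I would record the elementary structural facts about $h$. Since $g \in C(X,\mathbb{Z})$ and $X$ is compact, $g$ takes only finitely many values and is locally constant; writing $A_j = g^{-1}(j)$ for $j$ in the finite range of $g$ gives a clopen partition of $X$ on each piece of which $h = T^{-j}$. Hence $h$ is continuous, and its image $h(X) = \bigcup_j T^{-j}(A_j)$ is a finite union of clopen sets, so $h(X)$ is clopen and, being a nonempty clopen subset of a Cantor set, is itself a Cantor set.

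The intertwining relation is then a direct computation: using $S(x) = T^{p(x)}(x)$ together with $p(x) = c + gS(x) - g(x)$,
$$ h(S(x)) = T^{-g(S(x))}(T^{p(x)}(x)) = T^{p(x) - g(S(x))}(x) = T^{c - g(x)}(x) = T^c(h(x)). $$
Thus $h \circ S = T^c \circ h$. Consequently $T^c(h(X)) = h(S(X)) = h(X)$ since $S$ is onto, so $T^c$ restricts to a homeomorphism of $h(X)$; and because $h$ is a continuous surjection onto $h(X)$ intertwining $S$ with $T^c$, it is a factor map. Minimality of $(h(X),T^c)$ follows from the standard fact that a factor of a minimal system is minimal: the $h$-preimage of any closed $T^c$-invariant set is closed and $S$-invariant, hence all of $X$. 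This settles the first assertion.

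For the second assertion, assume $T^c$ is minimal on $X$. Then $h(X)$ is a nonempty closed $T^c$-invariant subset, so $h(X) = X$ and $h$ is onto. It remains to prove $h$ is injective, which is equivalent to the clopen sets $B_j := T^{-j}(A_j)$ being pairwise disjoint: an overlap $z \in B_i \cap B_j$ with $i \neq j$ yields distinct points $T^i z, T^j z$ with common image $z$ under $h$, and conversely any coincidence $h(a) = h(b)$ with $a \neq b$ forces such an overlap. Here is the crux. Let $\mu$ be a $T$-invariant Borel probability measure, which exists and has full support because $(X,T)$ is minimal. Since the $A_j$ partition $X$ and $\mu$ is $T$-invariant, $\sum_j \mu(B_j) = \sum_j \mu(A_j) = 1 = \mu(X)$; as the $B_j$ cover $X$, equality in finite subadditivity forces every pairwise intersection $B_i \cap B_j$ to have $\mu$-measure zero. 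But these intersections are clopen, and a clopen set of measure zero under a full-support measure is empty. Hence the $B_j$ partition $X$, so $h$ is injective, and being a continuous bijection of a compact Hausdorff space it is a homeomorphism; the intertwining relation then upgrades it to a conjugacy $(X,S) \cong (X,T^c)$.

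The routine parts are the intertwining computation and the continuity/clopen bookkeeping. The one genuinely substantive step, and where I expect the real content to lie, is the injectivity of $h$, i.e. turning the covering condition supplied by surjectivity into genuine disjointness of the translates $T^{-j}(A_j)$. The measure argument above is what I would use, leaning on the existence of a full-support invariant measure for the minimal system $(X,T)$ together with the fact that clopen null sets are empty; any measure-free alternative would still have to rule out exactly the same orbit-overlap, so this is the step to get right.
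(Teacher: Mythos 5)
Your proof is correct, and while its first half (the computation $h\circ S=T^c\circ h$, the factor-map claim, and minimality of $(h(X),T^c)$ as a factor of the minimal system $(X,S)$) matches the paper's argument, your injectivity step takes a genuinely different route. The paper argues combinatorially on orbits: since $h$ maps each point into its own $T$-orbit, injectivity need only be checked within one $T$-orbit; points on the same $S$-orbit separate because $h(S^k x)=T^{kc}h(x)$ and $T$ is aperiodic, and points on distinct $S$-orbits separate because each $T$-orbit is the union of exactly $c$ $S$-orbits (the orbit-number lemma) and of exactly $c$ $T^c$-orbits, so surjectivity of $h$ forces the induced map on orbit classes to be a bijection. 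You instead recast injectivity as disjointness of the clopen translates $B_j=T^{-j}(A_j)$ and settle it measure-theoretically: a full-support $T$-invariant measure gives $\sum_j\mu(B_j)=\sum_j\mu(A_j)=1=\mu(X)$, equality in finite subadditivity makes the pairwise intersections $\mu$-null, and clopen null sets under a full-support measure are empty. Your route buys independence from the orbit-number machinery — it never needs $c$ to equal the orbit number, only the coboundary identity — at the mild cost of invoking Krylov--Bogolyubov, whereas the paper's route stays purely topological and additionally exhibits the conceptually illuminating bijection between $S$-orbits and $T^c$-orbits inside each $T$-orbit. Two small points worth making explicit in your write-up: the converse direction of your equivalence (overlap implies non-injectivity) uses aperiodicity of $T$, since $z\in B_i\cap B_j$ yields $T^iz\neq T^jz$ only because infinite minimal Cantor systems have no periodic points; and in the direction you actually use, a coincidence $h(a)=h(b)$ with $a\neq b$ forces $g(a)\neq g(b)$ precisely because $T^{-g(a)}$ is injective on each level set $A_{g(a)}$. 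Neither is a gap, just an implicit step.
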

\begin{proof}
Since $g$ is continuous, it is clear that $h(x) = T^{-g(x)}(x)$ is a continuous map 
from $X$ to itself. We have the following relation
\begin{align*}
h(S(x))  & = T^{-gS(x)}(S(x)) \\
& = T^{-gS(x)}T^{p(x)}(x) \\
& = T^{-gS(x)}T^{c+gS(x)-g(x)}(x) \\
& = T^{c-g(x)}(x) \\
& = T^{c}T^{-g(x)}(x) \\
& = T^{c}h(x). 
\end{align*}
It follows from the above that $T^c(h(X)) = h(S(X))=h(X)$ and since 
$T^c$ is a continuous injection, $T^c:h(X) \to h(X)$ is a homeomorphism. 
The $h$-image of an $S$-orbit in $X$ is a $T^c$-orbit in $h(X)$ and 
by continuity, the $h$-image of the closure of an $S$-orbit is 
the closure of a $T^c$-orbit. Since $S$ is minimal, the closure of every 
$T^c$-orbit is dense in $h(X)$. Therefore, $(T^c,h(X))$ is a minimal 
Cantor system. 

For the \lq \lq furthermore\rq \rq \ claim, note that 
if $T^c:X \to X$ is minimal, then each $T^c$-orbit is dense in $X$ 
and therefore $h(X)=X$. To see that $h$ is one-to-one, first note that if $x,y$ 
are in different $T$-orbits then $h(x) \neq h(y)$. If
$y = S^k(x)$ for $k \neq 0$ then $h(y) = h(S^k(x)) = T^{kc}(h(x)) \neq h(x)$. 
So assume $x,y$ are in the same $T$-orbit but separate $S$-orbits. 
Note that each $T$-orbit is the union of $c$ distinct $S$-orbits and also 
the union of $c$ distinct $T^c$-orbits. Since $h(X)=X$, the map $h$ must induce
a bijection between the $T^c$-orbits and the $S$-orbits which are subsets of a single
$T$-orbit. Therefore, $h(x)$ and $h(y)$ must be in separate $T^c$-orbits. 
This completes the proof that $h$ is one-to-one and therefore
$h$ is a conjugacy from $(X,S)$ to $(X,T^c)$. 
\end{proof}

Therefore there are two possible 
distinctions between a bounded speedup $S$ with orbit number $c$ and
the power $T^c$. First, it may be that $T^c$ is not minimal 
on $X$, but there exists a bounded speedup $S$ with orbit number $c$ such that 
$S$ is minimal on $X$. 
Second, while the jump function for a minimal bounded speedup $S$
is always equal to a constant $c$ plus a $T$-coboundary, 
it may not be equal to $c$ plus an $S$-coboundary. 
In Section $4$, we show that both of these possibilities are realized, 
even when restricting to the case where $T$ is a substitution
system. In general, it is possible that both $T^c$ and $S$ are minimal, 
but the jump function has integral 
different from $c$ for some $S$-invariant measure, implying that
there are $S$-invariant Borel probability measures which are 
not $T$-invariant (see Example \ref{exammeas}). 

\subsection{Introducing new invariant measures}

Let $(X,T)$ be a minimal Cantor system and $(Y,S)$ a speedup of $(X,T)$. As shown in \cite{Ash}, 
the speedup relation induces a homeomorphism $\varphi:Y\rightarrow X$ and a resulting 
injection $\varphi_{*}:M(X,T)\hookrightarrow M(Y,S)$. Below we outline the construction of an example that illustrates that this injection need not be a bijection, even for bounded speedups. Our example is one where $(X,T)$ is a minimal Cantor system such that $(X,T^2)$ is also minimal, with $T$ uniquely ergodic, but where $T^2$ has two invariant measures. 

First, let $(Y,S,\nu)$ be an ergodic automorphism of a Lebesgue probability space which is a tower of height two over a mixing system. 
Let $(X,T)$ be a mixing minimal Cantor system which is uniquely ergodic 
(e.g. apply the Jewett-Krieger Theorem to find a minimal model for a mixing ergodic system). 
Let $\mu$ denote the unique $T$-invariant measure on $X$. 
Now, we may apply Theorem 2.5 of \cite{Ormes} to create a minimal Cantor system $(X,T')$
which is strongly orbit equivalent to $(X,T)$ such that $(X,T',\mu)$ is measurably conjugate
to $(Y,S,\nu)$. Strong orbit equivalence is a notion defined in \cite{GPS}, we cite here the relevant properties. 

As a result of this construction,  
\begin{enumerate}
\item $(X,T')$ is uniquely ergodic with invariant measure $\mu$, \label{uniqerg}
\item $(X,(T')^2)$ is minimal, \label{T'min}
\item $(X,(T')^2)$ has two ergodic invariant measures $\nu_1, \nu_2$ and 
$\mu = \frac{1}{2}(\nu_1 + \nu_2)$. \label{measures}
\end{enumerate}

Property \ref{uniqerg} holds because (strong) orbit equivalence preserves spaces of invariant measures \cite{GPS}. 

To see Property \ref{T'min}, first note that $(T')^2$ is minimal if and only if 
there are no continuous functions $f:X \to \mathbb{C} \setminus \{0\}$ 
such that $f\circ T' = - f$, i.e., if
$-1$ is an eigenvalue for $T'$. Strong orbit equivalence does not 
generally preserve eigenvalues, but it does preserve the rational part of the spectrum
-- eigenvalues of the form $\exp (2 \pi i /n)$ where $n \in \mathbb{Z}^+$ 
(see section 2 of \cite{GPS}). Therefore, since $T$ does not have $-1$ as an eigenvalue, neither does $T'$. Thus $(T')^2$ is minimal. 

To see Property \ref{measures}, note that $(X,T',\mu)$ is measurably conjugate to 
$(Y,S,\nu)$. Therefore, 
there is a Borel set $A \subset X$ such that $\mu(A) = \frac{1}{2}$, 
$\mu((T')^{-1}A \cap A) = 0$ and $\mu((T')^{-1}A \cup A) = 1$.
We obtain two new measures $\nu_1$, $\nu_2$ such that 
$\mu = \frac{1}{2} (\nu_1 + \nu_2)$
by setting 
$\nu_1(B) = 2 \mu(A \cap B)$ and $\nu_2(B) = 2 \mu((T')^{-1}A \cap B)$
for any $\mu$-measurable set $B$. Both $\nu_i$ are $(T')^2$ invariant. 

We remark that $\mu$, being the midpoint of two other measures, is not ergodic for $T'$. 
This illustrates that the injection $\varphi_{*}:M(X,T)\hookrightarrow M(X,S)$ need not preserve ergodic measures.

\subsection{Entropy}
We recall the definition of both topological and measure-theoretic 
entropy for minimal Cantor systems.
See \cite{Walters} for much more. 
Let $(X,T)$ be a Cantor system and let $\mathcal{P}$ 
be a finite, nonempty, clopen partition of $X$, 
and let $H(\mathcal{P})$ denote the number of nonempty elements of $\mathcal{P}$. 
The entropy $T$ relative to the clopen partition $\mathcal{P}$ is
$$
h(T,\mathcal{P})=\lim_{n\rightarrow\infty}\frac{1}{n}H\left(\JOin_{i=0}^{n-1}T^{-i}\mathcal{P}\right).
$$
The topological entropy of $T$ is then defined as 
$$
h(T)=\sup_{\mathcal{P}}h(T,\mathcal{P}),
$$
where $\mathcal{P}$ ranges over all clopen partitions of $X$. When $\mu$ is a $T$-invariant
Borel probability measure, we may define the measure-theoretic entropy $h_{\mu}(T)$ in 
a similar way, with 
$$H_{\mu}(T,\mathcal{P}) = - \sum_{A \in \mathcal{P}} \mu(A) \log \mu(A)$$
$h_{\mu}(T,\mathcal{P})=\lim_{n\rightarrow\infty}\frac{1}{n}H_{\mu}\left(\JOin_{i=0}^{n-1}T^{-i}\mathcal{P}\right)$,
and $h_{\mu}(T)=\sup_{\mathcal{P}}h_{\mu}(T,\mathcal{P})$. The Variational Principle states
$$h(T) = \sup_{\mu \in M(X,T)} h_{\mu}(T).$$

We now recall the following theorem of Boyle and Handelman which shows that entropy is not preserved by strong orbit equivalence. 
\begin{theorem}[\cite{BH94}] \label{BHThm}
	Suppose $0<\log(\alpha)<\infty$. There exists a homeomorphism $S$ strongly orbit equivalent to the dyadic adding machine such that $h(S)=\log(\alpha)$.
\end{theorem}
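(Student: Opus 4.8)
Since this is a cited result, I will sketch the approach I would take. The plan is to realize $S$ as a Bratteli--Vershik system and to invoke the Giordano--Putnam--Skau classification of strong orbit equivalence \cite{GPS}, reducing the problem to a combinatorial construction. Recall that two minimal Cantor systems are strongly orbit equivalent exactly when their unital ordered dimension groups are isomorphic, and that the dyadic adding machine has dimension group $(\mathbb{Z}[1/2], \mathbb{Z}[1/2]_{\ge 0}, 1)$. It therefore suffices to build a minimal Cantor system whose dimension group is order-unit isomorphic to this one and whose topological entropy equals $\log(\alpha)$.

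The dimension group of a Bratteli--Vershik system is the inductive limit $\varinjlim(\mathbb{Z}^{|V_n|}, M_n)$ of its incidence matrices, which is independent of the Vershik ordering; once the diagram is simple, the order and order unit are determined by the invariant-measure (trace) data. The ordering, together with the $M_n$, determines the dynamics and hence the entropy. This gives two essentially independent degrees of freedom, and the task is to arrange a simple diagram with a proper ordering whose limit is the dyadic group while its word complexity grows at a prescribed exponential rate, measured against the definition of entropy via clopen partitions recalled above and the Variational Principle.

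The central difficulty, and the key mechanism, is to decouple complexity from height growth. To keep the limit equal to $\mathbb{Z}[1/2]$ the multiplicative (supernatural) data of the diagram must be purely $2$-adic, so the tower heights should grow like powers of $2$. Entropy, however, is driven not by the heights but by the number of distinct partition-names the towers exhibit, which is governed by the number and multiplicity of paths among the vertices. I would therefore let the number of vertices per level grow without bound while arranging the incidence matrices to collapse this large rank down to the single dyadic direction in the limit (for instance via transitions whose rows become asymptotically proportional), with the surviving generator multiplied by a power of $2$ across each stage. Telescoping to the stage boundaries then exhibits the limit as $(\mathbb{Z}[1/2], \mathbb{Z}[1/2]_{\ge 0}, 1)$, while the proliferation of distinguishable paths supplies the entropy.

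The main obstacle is precisely this decoupling: the group $\mathbb{Z}[1/2]$ is a rigid rank-one object, and finite topological rank is known to force zero entropy, so any positive-entropy model must have unbounded rank yet still collapse to rank one in the limit with purely $2$-adic growth. The technical heart is the quantitative estimate showing that the rank collapse needed to pin down the limit group does not destroy the complexity accumulated among the vertices, so that the growth rate $\lim \frac{1}{H_n}\log C_n$ can be driven to any prescribed value, where $H_n \sim 2^n$ are the heights and $C_n$ counts distinct partition-names. Tuning the free parameters (vertex counts and edge multiplicities per stage) and invoking a monotonicity/continuity argument realizes the exact value $\log(\alpha) \in (0,\infty)$. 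It remains to confirm minimality (a simple diagram with a proper Vershik ordering) and that the trace data induce the standard order on $\mathbb{Z}[1/2]$, after which \cite{GPS} delivers the strong orbit equivalence with the dyadic adding machine.
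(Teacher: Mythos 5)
The paper offers no proof to compare against: Theorem \ref{BHThm} is imported verbatim from \cite{BH94} and used as a black box, so your proposal can only be measured against the actual Boyle--Handelman argument. Your overall skeleton is the right one and matches the known line of proof (also developed further by Sugisaki): reduce via the Giordano--Putnam--Skau characterization of strong orbit equivalence by unital ordered dimension groups, then build a Bratteli--Vershik model whose $K^0$ is $(\mathbb{Z}[1/2],\mathbb{Z}[1/2]_{\geq 0},1)$ while the vertex counts, not the heights, carry the complexity. Your observation that finite topological rank forces zero entropy, so the rank must blow up at finite levels and collapse only in the limit, is exactly the correct tension.

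There is, however, a genuine gap in the mechanism you propose for the collapse. Strong orbit equivalence requires an isomorphism of the dimension group itself, and the rank of $\varinjlim(\mathbb{Z}^{|V_n|},M_n)$ is governed by the ranks of the \emph{finite} composites: the image of level $m$ in the limit has rational dimension $\lim_n \operatorname{rank}(M_n\cdots M_m)$. Incidence matrices whose rows are merely ``asymptotically proportional'' generically have full-rank finite composites, so the limit group has unbounded rank; asymptotic proportionality only controls the trace data, which by \cite{GPS} pins down orbit equivalence ($K^0$ modulo infinitesimals), not strong orbit equivalence. The fix is to make the collapse exact: after telescoping, take each incidence matrix to be rank one with all rows equal (every level-$(n{+}1)$ vertex receives the identical multiset of edges from level $n$) and with row sum a power of $2$. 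Then every matrix factors through $\mathbb{Z}$, the diagram telescopes to $(\mathbb{Z}[1/2],\mathbb{Z}[1/2]_{\geq 0},1)$ on the nose, all level-$n$ towers share a single height $H_n=2^{k_n}$, and $|V_{n+1}|$ remains a free parameter: distinct vertices are distinct \emph{orderings} of the same edge multiset, so one can take roughly $\alpha^{H_{n+1}}$ of them. The quantitative estimate you defer as ``the technical heart'' is then a concrete Grillenberger-type counting argument --- the multinomial number of available orderings dwarfs $\alpha^{H_{n+1}}$, giving the lower bound, while a word of length $\ell$ is determined by a position in a tower plus a bounded number of vertex names, giving the matching upper bound $h(S)\leq\log\alpha$ --- together with a properness/primitivity check for minimality. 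As written, your proposal would establish only orbit equivalence with prescribed entropy, not the strong orbit equivalence asserted in the statement.
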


Combining the above result with the characterization of strong orbit equivalence and orbit equivalence in \cite{GPS} and the main result in \cite{Ash}, it follows that for any two 
(strongly) orbit equivalent minimal Cantor systems $(X_1,T_1)$ and $(X_2, T_2)$, each is conjugate to a speedup of the other. Thus the above theorem indicates that 
there is no hope to control the entropy of a general speedup. However, in the bounded case there is more to say and we look to a theorem of Neveu for inspiration. 

\begin{theorem}[\cite{Neveu2}]\label{Nev}
	Suppose $(X,\mathscr{B},\mu, T)$ is an ergodic automorphism and $(X,\mathscr{B}, \mu, S)$ is an aperiodic automorphism of the form 
	$$
	S(x)=T^{p(x)}(x),
	$$
	where $p:X\rightarrow\Z^{+}$. Then $h_{\mu}(S)=(\int{p\,d\mu})h_{\mu}(T)$ whenever $\int{p\,d\mu}$ is finite.
\end{theorem}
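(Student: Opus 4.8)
The plan is to realize $(X,S,\mu)$ as the first-return map of a Kakutani skyscraper built over $S$ and then to collapse that skyscraper back onto $(X,T,\mu)$, so that Abramov's formula contributes the factor $\int p\,d\mu$. Writing $\bar p=\int p\,d\mu<\infty$, I would form the tower
$$\hat X=\{(x,k): x\in X,\ 0\le k<p(x)\},$$
with tower map $\hat T(x,k)=(x,k+1)$ when $k+1<p(x)$ and $\hat T(x,p(x)-1)=(Sx,0)$, equipped with the normalized measure $\hat\mu=\tfrac{1}{\bar p}(\mu\otimes\nu)$, where $\nu$ is counting measure on the fibers. Since $\mu$ is $S$-invariant, a direct check on the base level $X\times\{0\}$ shows that $\hat\mu$ is $\hat T$-invariant, and finiteness of $\bar p$ is exactly what makes $\hat\mu$ a probability measure.

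First I would apply Abramov's formula on $(\hat X,\hat T,\hat\mu)$ to the cross-section $A=X\times\{0\}$. Every point of a column returns to $A$ in finitely many steps, the induced first-return map on $A$ is (under $x\leftrightarrow(x,0)$) exactly $S$, the return time is $p$, the induced measure is $\mu$, and $\hat\mu(A)=1/\bar p$. Abramov's formula then gives $h_\mu(S)=\bar p\cdot h_{\hat\mu}(\hat T)$, so the theorem reduces to the single identity $h_{\hat\mu}(\hat T)=h_\mu(T)$. To produce it I would introduce the collapsing map $\pi:\hat X\to X$, $\pi(x,k)=T^k x$, which satisfies $\pi\circ\hat T=T\circ\pi$ by construction.

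The crux is to prove that $\pi$ is measure-preserving, $\pi_*\hat\mu=\mu$; granting this, $\pi$ is a factor map that is finite-to-one (each $y$ has only the preimages $(T^{-k}y,k)$ with $0\le k<p(T^{-k}y)$, finitely many since $p$ is bounded), and finite-to-one factor maps preserve entropy, yielding $h_{\hat\mu}(\hat T)=h_\mu(T)$ and hence the result. To verify $\pi_*\hat\mu=\mu$ I would show that the unnormalized image $\lambda(E)=\sum_{k\ge 0}\mu(\{p>k\}\cap T^{-k}E)$ is $T$-invariant: the difference $\lambda(T^{-1}E)-\lambda(E)$ telescopes into $\sum_{j\ge 1}\mu(\{p=j\}\cap T^{-j}E)-\mu(E)$, and since $\{p=j\}\cap T^{-j}E=\{p=j\}\cap S^{-1}E$, the remaining sum is precisely $\mu(S^{-1}E)=\mu(E)$ by $S$-invariance. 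Thus $\lambda$ is a $T$-invariant measure with $\lambda\ll\mu$ and total mass $\bar p$; since $\mu$ is $T$-ergodic, $d\lambda/d\mu$ is constant, forcing $\lambda=\bar p\,\mu$, i.e. $\pi_*\hat\mu=\mu$.

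The main obstacle is exactly this measure-preservation step. The skyscraper over $S$ is a strictly larger space, and the naive collapse $\pi$ overcounts each $T$-orbit, so a priori it is not a factor map at all; it is the simultaneous $T$- and $S$-invariance of $\mu$, funneled through the telescoping that converts the defect at the top of each column into $S$-invariance, together with $T$-ergodicity, that rescues the identity. A secondary subtlety, pertinent to Neveu's full generality rather than to the bounded case used here, is that when $p$ is merely integrable the map $\pi$ is only countable-to-one, so the clean finite-to-one entropy-preservation must be replaced by a direct argument that the height coordinate carries no extra entropy; in the bounded-$p$ regime relevant to this paper that difficulty does not arise.
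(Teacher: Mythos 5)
The paper contains no proof of Theorem~\ref{Nev}: it is imported verbatim from Neveu \cite{Neveu2} as a known result, so your argument can only be judged on its own terms (and, for what it is worth, the skyscraper-plus-Abramov viewpoint is very much in the spirit of Neveu's original ``transformations induites'' setting). The core of your proof is correct and the crux is correctly identified and executed: the unnormalized pushforward $\lambda(E)=\sum_{k\ge 0}\mu(\{p>k\}\cap T^{-k}E)$ has total mass $\sum_k\mu(p>k)=\int p\,d\mu$, the telescoping computation $\lambda(T^{-1}E)-\lambda(E)=\sum_{j\ge 1}\mu(\{p=j\}\cap T^{-j}E)-\mu(E)$ together with $\{p=j\}\cap T^{-j}E=\{p=j\}\cap S^{-1}E$ and $S$-invariance does give $T$-invariance of $\lambda$, absolute continuity holds because $\mu(E)=0$ forces $\mu(T^{-k}E)=0$ for all $k$, and $T$-ergodicity then yields $\lambda=\bar p\,\mu$, i.e.\ $\pi_{*}\hat\mu=\mu$. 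The identification of the first-return map on $X\times\{0\}$ with $S$, with return time $p$ and induced measure $\mu$, is also correct, so the theorem does reduce to $h_{\hat\mu}(\hat T)=h_\mu(T)$.

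Two caveats, one minor and one substantive. First, the theorem assumes only that $S$ is aperiodic, not ergodic, so the tower $(\hat X,\hat T,\hat\mu)$ may be non-ergodic and the classical Abramov formula does not apply verbatim; you need the version for a base that sweeps out the space ($\hat X=\bigcup_{n\ge 0}\hat T^{-n}A$ mod $\hat\mu$, true here by construction), or an ergodic-decomposition patch: writing $\mu=\int\mu_\omega\,d\omega$ over $S$-ergodic components, per-component Abramov gives $h_{\mu_\omega}(S)=\bar p_\omega\, h_{\hat\mu_\omega}(\hat T)$ with $\bar p_\omega=\int p\,d\mu_\omega$, extremality of the $T$-ergodic $\mu$ forces $\pi_{*}\hat\mu_\omega=\mu$ for a.e.\ $\omega$, and integrating recovers the formula since $\int\bar p_\omega\,d\omega=\bar p$. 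Second, the theorem as stated assumes only $\int p\,d\mu<\infty$, while your entropy-preservation step genuinely uses boundedness of $p$, as you acknowledge. The flagged ``direct argument'' for the integrable case is not automatic in your framework: with $\mathcal{F}=\pi^{-1}\mathcal{B}$ and $\mathcal{Q}$ the (countable) height partition, determinism of the height coordinate gives $\bigvee_{i=0}^{n-1}\hat T^{-i}\mathcal{P}\preceq\mathcal{Q}\vee\mathcal{F}$ for every finite $\mathcal{P}$, but concluding zero relative entropy from this requires $H(\mathcal{Q}\mid\mathcal{F})<\infty$, and integrability of $p$ controls $\sum_j\mu(p>j)$ without controlling $-\sum_j\hat\mu(k=j)\log\hat\mu(k=j)$, which can diverge. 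So in the merely integrable case a real argument is still owed --- e.g.\ the fact that invertible extensions with purely atomic fiber measures carry no relative entropy, or Neveu's own stopping-time argument. Since this paper only invokes Theorem~\ref{Nev} with bounded jump functions (Proposition~\ref{lower}), your proof fully covers the use made of the theorem here, but it does not yet establish it in the stated generality.
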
 

In what follows, we provide upper and lower bounds on the entropy of a bounded speedup of $T$ in terms of $p$ and $h(T)$.
Combining Lemma~\ref{Nic Lemma} and Theorem~\ref{Nev}, we obtain a lower bound for the entropy of a bounded speedup.

\begin{proposition}\label{lower}
Let $(X,T)$ be a minimal Cantor system and $S:X\to X$ a minimal bounded speedup of $(X,T)$. 
Then $h(S)\ge c\cdot h(T)$ where $c$ is the orbit number for $T\underset{p}{\leadsto} S$.
\end{proposition}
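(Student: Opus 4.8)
The plan is to realize the topological entropy $h(T)$ through ergodic $T$-invariant measures, transport each such measure to an $S$-invariant measure whose entropy is $c$ times as large by way of Neveu's formula (Theorem \ref{Nev}), and then read off the bound from the variational principle for $S$.

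First I would record the measure-theoretic fact on which everything rests: every $T$-invariant Borel probability measure $\mu$ is automatically $S$-invariant, so that $M(X,T)\subseteq M(X,S)$. Indeed, since $p$ is bounded, $X=\bigsqcup_i X_i$ is a finite clopen partition with $X_i=p^{-1}(i)$ and $S|_{X_i}=T^i$; because $S$ is a homeomorphism the images $S(X_i)=T^i(X_i)$ again partition $X$, so for any Borel set $A$ one has $\mu(S^{-1}A)=\sum_i\mu(X_i\cap T^{-i}A)=\sum_i\mu(T^iX_i\cap A)=\mu(A)$, the middle equality being $T$-invariance. Thus any $T$-invariant $\mu$ is a legitimate common measure to which Neveu's theorem can be applied.

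Next I would fix a measure $\mu$ that is ergodic for $T$. Since $S$ is minimal on an infinite Cantor set it is aperiodic, $\mu$ is $S$-invariant by the previous step, and $\int p\,d\mu\le\sup_X p<\infty$; hence Theorem \ref{Nev} gives $h_\mu(S)=\left(\int p\,d\mu\right)h_\mu(T)$. I would then evaluate $\int p\,d\mu$ using Lemma \ref{Nic Lemma}: writing $p=c+f-f\circ T$ with $f\in C(X,\Z)$ and using $T$-invariance of $\mu$, the coboundary integrates to zero, so $\int p\,d\mu=c$ and therefore $h_\mu(S)=c\cdot h_\mu(T)$. The variational principle for $S$ then yields $h(S)\ge h_\mu(S)=c\cdot h_\mu(T)$ for every ergodic $T$-invariant $\mu$. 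Because $\nu\mapsto h_\nu(T)$ is affine with the ergodic measures as its extreme points, the supremum of $h_\mu(T)$ over ergodic $\mu$ equals $\sup_{\nu\in M(X,T)}h_\nu(T)=h(T)$; taking this supremum produces $h(S)\ge c\cdot h(T)$.

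Each step is short, so I do not expect a single hard computation; the conceptual crux, and the only point needing care, is the opening observation that $T$-invariance forces $S$-invariance. That is precisely what makes Neveu's theorem applicable to a common measure and what turns the $T$-coboundary identity of Lemma \ref{Nic Lemma} into the clean value $\int p\,d\mu=c$ (note that $p$ need not be an $S$-coboundary, so $h_\mu(S)=c\,h_\mu(T)$ is an equality for this particular $\mu$ even though $S$ is not conjugate to $T^c$). A final minor point to dispatch is the case $h(T)=\infty$, where the same inequalities force $h(S)=\infty$ and the bound holds trivially.
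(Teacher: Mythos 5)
Your proposal is correct and follows essentially the same route as the paper's proof: restrict the variational principle for $S$ to ergodic $T$-invariant measures (using $M(X,T)\subseteq M(X,S)$), apply Neveu's formula (Theorem \ref{Nev}), and evaluate $\int p\,d\mu=c$ via the coboundary decomposition of Lemma \ref{Nic Lemma}. The only difference is that you spell out the inclusion $M(X,T)\subseteq M(X,S)$ explicitly via the clopen partition $X_i=p^{-1}(i)$, a step the paper leaves implicit (it was established earlier via the injection $\varphi_*$ from \cite{Ash}), and your argument for it is sound.
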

\begin{proof}
	Let $c\in\Z^{+}$ be the orbit number for $T\underset{p}{\leadsto} S$ and let $p:X \to \mathbb{Z}^+$ be the jump function. Consider the following calculation:
	\begin{align*}
	h(S)&=\displaystyle\sup_{\mu\in M(X,S)}h_{\mu}(S) \\
	&\ge\displaystyle\sup_{\nu\in M(X,T)}h_{\nu}(S) \\
	&\ge\displaystyle\sup_{\nu\in \partial_{E}(M(X,T))}\left(\int{p\, d\nu}\right)h_{\nu}(T)\\
	&=\displaystyle\sup_{\nu\in \partial_{E}(M(X,T))}\left(\int{(c+(f-f\circ T))}\,d\nu\right)h_{\nu}(T)\text{ by Lemma~\ref{Nic Lemma}}\\
	&=\displaystyle\sup_{\nu\in \partial_{E}(M(X,T))}c\cdot h_{\nu}(T)\\
	&=c\left(\displaystyle\sup_{\nu\in \partial_{E}(M(X,T))}h_{\nu}(T)\right) \\
	&=c\cdot h(T).
	\end{align*}
\end{proof}

Before proceeding with the proof of the upper bound for the topological entropy of bounded speedups we will need the following lemma.
\begin{lemma}\label{GammaLemma}
	Let $(X,T)$ be minimal Cantor system and suppose
	$T \pleadsto S$ where $S:X \to X$ is minimal. 
	For every $\varepsilon>0$ there exists $N\in\N$ such that for all $n\ge N$ and for every $x\in X$
	$$
	\frac{1}{n}\sum_{i=0}^{n-1}p(S^{i}(x))<\left(\sup_{\mu\in M(X,S)}\int{p}\,d\mu\right) +\varepsilon.
	$$
\end{lemma}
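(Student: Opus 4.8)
The goal is a uniform-in-$x$ upper bound on the Birkhoff averages of $p$ along $S$-orbits, so this is exactly the statement that the maximal ergodic average controls all orbital averages once $n$ is large. The plan is to argue by contradiction and extract an invariant measure whose integral of $p$ beats the supremum, contradicting its own definition. Suppose the conclusion fails for some $\varepsilon>0$; then for every $N$ there is an $n_N\ge N$ and a point $x_N\in X$ with
$$
\frac{1}{n_N}\sum_{i=0}^{n_N-1}p(S^i(x_N))\ge\left(\sup_{\mu\in M(X,S)}\int p\,d\mu\right)+\varepsilon.
$$
I would then form the empirical measures $\mu_N=\frac{1}{n_N}\sum_{i=0}^{n_N-1}\delta_{S^i(x_N)}$ along these orbit segments.

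The main tool is weak-$*$ compactness of the space of Borel probability measures on the Cantor space $X$. First I would pass to a subsequence so that $n_N\to\infty$ and $\mu_N\to\mu$ weak-$*$ for some probability measure $\mu$. The standard computation shows that any weak-$*$ limit of such long Birkhoff averages is $S$-invariant: for any $g\in C(X)$ one has $\int g\,d\mu_N-\int g\circ S\,d\mu_N=\frac{1}{n_N}(g(x_N)-g(S^{n_N}(x_N)))$, whose absolute value is at most $\frac{2\|g\|_\infty}{n_N}\to 0$, so $\int g\,d\mu=\int g\circ S\,d\mu$ and hence $\mu\in M(X,S)$. Here it is essential that $p$ is continuous (Proposition preceding this, since $p$ is bounded), so $p\in C(X)$ and weak-$*$ convergence gives $\int p\,d\mu_N\to\int p\,d\mu$.

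Now I would take the limit in the displayed inequality. By construction $\int p\,d\mu_N=\frac{1}{n_N}\sum_{i=0}^{n_N-1}p(S^i(x_N))\ge\left(\sup_{\mu'\in M(X,S)}\int p\,d\mu'\right)+\varepsilon$ for every $N$, and letting $N\to\infty$ yields
$$
\int p\,d\mu\ge\left(\sup_{\mu'\in M(X,S)}\int p\,d\mu'\right)+\varepsilon.
$$
But $\mu\in M(X,S)$, so the left-hand side is at most the supremum it is being compared against, forcing $\varepsilon\le 0$, a contradiction. This establishes the existence of the required $N$, and I note the bound holds uniformly in $x$ precisely because the failing point $x_N$ was allowed to be arbitrary at each stage.

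The only delicate point, and the step I expect to require the most care, is the interchange of limit and supremum: a priori the contradiction hypothesis produces a potentially different bad pair $(n_N,x_N)$ for each $N$, so I must make sure the extracted limit measure $\mu$ is genuinely $S$-invariant rather than merely a weak-$*$ accumulation point with no dynamical meaning. The invariance computation above handles this, and crucially it uses only that $n_N\to\infty$ together with the boundedness of continuous functions on the compact space $X$; the continuity of $p$ then transfers the orbital inequality to the limit measure. Everything else is routine.
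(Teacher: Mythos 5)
Your proof is correct and follows essentially the same route as the paper: argue by contradiction, form empirical measures along the offending orbit segments, extract a weak-$*$ limit, and use continuity of $p$ together with $S$-invariance of the limit measure to contradict the definition of the supremum. The only cosmetic difference is that you verify invariance of the limit by the explicit telescoping computation, whereas the paper cites Theorem 6.9 of \cite{Walters} for the same fact.
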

\begin{proof}
	Let $$\displaystyle\int{p}\, d\mu_{1}=\sup_{\mu\in M(X,S)}\int{p}\,d\mu$$ and assume the conclusion is false; then there exists an $\varepsilon>0$ and an increasing sequence of positive integers $\{n_{k}\}$ and corresponding sequence of points $\{x_{n_{k}}\}$ which have the property that
	$$
	\frac{1}{n_{k}}\sum_{i=0}^{n_{k}-1}p(S^{i}(x_{n_{k}}))\ge\int{p}\,d\mu_{1}+\varepsilon.
	$$
	Define $$\nu_{n_{k}}=\frac{1}{n_{k}}\sum_{i=0}^{n_{k}-1}\delta_{x_{n_{k}}} \circ S^{-i},$$ where $\delta_{x_{n_{k}}}$ represents the Dirac point-mass measure at $x_{n_{k}}$. As $M(X)$ is compact in the $\weak^{*}$ topology there exists $\nu\in M(X)$ and a subsequence $\{n_{k_{\ell}}\}$ for which
	$$
	\begin{tikzpicture}
	\matrix(m)[matrix of math nodes,
	row sep=2.6em, column sep=2.8em,
	text height=1.5ex, text depth=0.25ex]
	{\nu_{n_{k_{\ell}}} & \nu. \\};
	\path[->,font=\scriptsize,>=angle 90]
	(m-1-1) edge node[auto] {$\weak^{*}$} (m-1-2);
	\end{tikzpicture}
	$$
	The measure $\nu$ is $S$-invariant (see \cite[Theorem $6.9$]{Walters}). Since $p$ is continuous, we can make the following estimate
	\begin{align*}
	\displaystyle\int{p}\,d\nu&=\lim_{\ell\rightarrow\infty}\int{p}\,d\nu_{n_{_{k\ell}}}\\
	&=\displaystyle\lim_{\ell\rightarrow\infty}\frac{1}{n_{k_{\ell}}}\sum_{i=0}^{n_{k_{\ell}}-1}p(S^{i}x_{n_{k_{\ell}}})\\
	&\ge\displaystyle\int{p}\,d\mu_{1}+\varepsilon,
	\end{align*}
	which yields our contradiction as $\int{p}\,d\mu_{1}=\sup_{\mu\in M(X,S)}\int{p}\,d\mu$.
\end{proof}
Now we use Lemma~\ref{GammaLemma} to prove our upper bound on the entropy of a bounded speedup.
\begin{proposition}\label{upper}
	Let $(X,T)$ be minimal Cantor system and suppose
	$T \pleadsto S$ where $S:X \to X$ is minimal. 
	Then $$ h(S)\le\sup_{\mu\in M(X,S)}\left(\int{p}\,d\mu\right)h(T).$$
\end{proposition}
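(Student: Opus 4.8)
The plan is to bound the entropy $h(S)$ by relating the $S$-refinement of a clopen partition to a $T$-refinement, controlling the difference by the Birkhoff-type sums of the jump function $p$. Fix a clopen partition $\mathcal{P}$ of $X$; it suffices to bound $h(S,\mathcal{P})$ uniformly in $\mathcal{P}$, since $T$ and $S$ generate the same topology and hence $h(S) = \sup_{\mathcal{P}} h(S,\mathcal{P})$. The key geometric fact is that each $S$-orbit block $\mathcal{O}(S,x,n)$ lies inside the $T$-orbit block $\mathcal{O}(T,x,m_n(x))$ where $m_n(x) = \sum_{i=0}^{n-1} p(S^i(x))$, because applying $S$ exactly $n$ times advances along the $T$-orbit by precisely $\sum_{i=0}^{n-1} p(S^i(x))$ steps.

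Next I would translate this into a counting estimate on atoms of the joined partitions. The atom of $\bigvee_{i=0}^{n-1} S^{-i}\mathcal{P}$ containing $x$ is determined by the $\mathcal{P}$-names of $x, S(x), \ldots, S^{n-1}(x)$; since each of these points equals $T^{j}(x)$ for some $0 \le j < m_n(x)$, this $S$-name is a subsequence of the full $T$-name $x, T(x), \ldots, T^{m_n(x)-1}(x)$. Let $M = \sup_{x} m_n(x)$ over the relevant range. Then every atom of $\bigvee_{i=0}^{n-1} S^{-i}\mathcal{P}$ is a union of atoms of $\bigvee_{j=0}^{M-1} T^{-j}\mathcal{P}$ refined by the finitely many possible patterns of which $T$-coordinates are selected. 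Since $p$ takes only finitely many values $\{z_1,\dots,z_\ell\}$, the number of such selection patterns for a block of length $n$ is bounded by $\ell^n$, so that
$$
H\left(\JOin_{i=0}^{n-1}S^{-i}\mathcal{P}\right) \le H\left(\JOin_{j=0}^{M-1}T^{-j}\mathcal{P}\right) + n\log \ell.
$$

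Now I would invoke Lemma~\ref{GammaLemma}: given $\varepsilon > 0$, for all large $n$ we have $m_n(x) < n\bigl(\sup_{\mu} \int p\,d\mu + \varepsilon\bigr)$ uniformly in $x$, so $M \le n\bigl(\sup_{\mu}\int p\,d\mu + \varepsilon\bigr)$. Dividing the displayed inequality by $n$ and taking $n \to \infty$ gives
$$
h(S,\mathcal{P}) \le \left(\sup_{\mu \in M(X,S)}\int p\,d\mu + \varepsilon\right) h(T,\mathcal{P}) + \log \ell \cdot \lim_{n}\tfrac{1}{n}.
$$
The main obstacle I anticipate is the extra additive term $n\log\ell$: a naive bound leaves a residual $\log\ell$ that does not vanish and spoils the claimed inequality. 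I would resolve this by being more careful about the counting — rather than bounding the selection patterns crudely by $\ell^n$, one observes that the entropy of the sequence of jump values $p(x), pS(x), \dots, pS^{n-1}(x)$ is itself dominated by $h(S,\mathcal{P}')$ for the partition $\mathcal{P}'$ into level sets of $p$, which is already counted inside $h(S)$; alternatively, one absorbs the pattern information by noting the selection of coordinates is itself determined by the $T$-names, so no independent $\log\ell$ factor is needed and the term drops out. After this refinement, letting $\varepsilon \to 0$ and taking the supremum over $\mathcal{P}$ yields $h(S) \le \bigl(\sup_{\mu \in M(X,S)}\int p\,d\mu\bigr) h(T)$.
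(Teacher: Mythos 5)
Your overall route is exactly the paper's: use Lemma~\ref{GammaLemma} to get the uniform estimate $m_n(x)=\sum_{i=0}^{n-1}p(S^i x)<n\left(\sup_{\mu\in M(X,S)}\int p\,d\mu+\varepsilon\right)$ for all large $n$ and all $x$, dominate the $n$-fold $S$-join of a clopen partition by a $T$-join of stretched length, divide by $n$, and let $\varepsilon\to 0$. The only substantive issue is how you dispose of the selection-pattern term $n\log\ell$, and here your two proposed fixes are not equally good. The first does not work: bounding the pattern entropy by $h(S,\mathcal{P}')$, where $\mathcal{P}'$ is the partition into level sets of $p$, yields at best $h(S)\le\left(\sup_{\mu}\int p\,d\mu\right)h(T)+h(S,\mathcal{P}')$, and since $h(S,\mathcal{P}')$ is part of the very quantity being bounded, this is circular and strictly weaker than the claim whenever it is nonzero.

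Your second fix is the correct one, but as stated --- \lq\lq the selection of coordinates is itself determined by the $T$-names\rq\rq\ --- it is false for an arbitrary clopen $\mathcal{P}$: the $\mathcal{P}$-name of $x,Tx,\ldots,T^{M-1}x$ carries no information about the values of $p$ along the orbit unless $\mathcal{P}$ refines $\{p^{-1}(z_1),\ldots,p^{-1}(z_\ell)\}$. The repair is to impose this refinement from the outset, which is precisely the paper's standing hypothesis $p^{-1}(\Z^{+})\le\alpha$. Then the atom of $\bigvee_{j=0}^{M-1}T^{-j}\mathcal{P}$ containing $x$ determines $p(x)$, hence the position $m_1=p(x)$ of $S(x)$ in the $T$-orbit, hence $p(S x)=p(T^{m_1}x)$, and so on recursively; so the $T$-name of length $M$ determines the entire $S$-name of length $n$, giving $H\left(\bigvee_{i=0}^{n-1}S^{-i}\mathcal{P}\right)\le H\left(\bigvee_{j=0}^{M-1}T^{-j}\mathcal{P}\right)$ with no extra factor at all. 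This restriction costs nothing: clopen partitions refining the level sets of $p$ are cofinal among all clopen partitions and $h(S,\cdot)$ is monotone under refinement, so the supremum of $h(S,\mathcal{P})$ over such $\mathcal{P}$ is still $h(S)$, while $\frac{1}{M}H\left(\bigvee_{j=0}^{M-1}T^{-j}\mathcal{P}\right)$ is still bounded by $h(T)$ in the limit. With that one sentence made explicit (and the incorrect first alternative deleted), your argument is complete and coincides with the paper's proof.
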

\begin{proof}
Suppose $\mu_1 \in M(X,S)$ satisfies 
$\displaystyle \int{p}\,d\mu_1 = \sup_{\mu\in M(X,S)} \int{p}\,d\mu$.
	Fix $\varepsilon>0$ and let $\alpha$ be a finite clopen partition of $X$ such that $p^{-1}(\Z^{+})\le\alpha$. By Lemma~\ref{GammaLemma} there exists $N\in\N$ such that for every $n\ge N$ and every $x\in X$ we have $$\frac{1}{n}\sum_{i=0}^{n-1}p(S^{i}x)<\int{p}\,d\mu_{1}+\varepsilon.$$ It immediately follows then that for every $n>N$
	$$
	\JOin_{i=0}^{n-1}S^{i}(\alpha)\le\JOin_{i=0}^{\lceil (n-1)(\int{p}\,d\mu_{1}+\varepsilon)\rceil} T^{i}(\alpha).
	$$
	From this we can immediately deduce that
	\begin{align*}
	h(S,\alpha)&\le\lim_{n\rightarrow}\frac{1}{n} H\left(\JOin_{i=0}^{\lceil (n-1)(\int{p}\,d\mu_{1}+\varepsilon)\rceil} T^{i}(\alpha)\right)\\
	&=\lim_{n\rightarrow\infty}\frac{\lceil n(\int{p}\,d\mu_{1}+\varepsilon)\rceil}{n}\cdot\frac{1}{\lceil n(\int{p}\,d\mu_{1}+\varepsilon)\rceil}H\left(\JOin_{i=0}^{\lceil (n-1)(\int{p}\,d\mu_{1}+\varepsilon)\rceil} T^{i}(\alpha)\right)\\
	&=\left(\int{p}\,d\mu_{1}+\varepsilon\right)h(T).
	\end{align*}
	It follows that 
	$$h(S)\le\left(\int{p}\,d\mu_{1}+\varepsilon\right)h(T).$$
	Since $\varepsilon$ was arbitrarily given, we may conclude that
	$$
	h(S)\le\left(\int{p}\,d\mu_{1}\right)h(T).
	$$\end{proof}
Combining Proposition~\ref{lower} and Proposition~\ref{upper} we obtain the main theorem of this section below.

\begin{theorem}\label{entropytheorem}
	Let $(X,T)$ be minimal Cantor system and suppose
	$T \pleadsto S$ where $S:X \to X$ is minimal.
	The entropy of $S$ lies within the following interval
	$$
	c\cdot h(T)\le h(S)\le \left(\int{p}\,d\mu_{1}\right)h(T)
	$$
	where is the $c$ orbit number for $T\pleadsto S$ and $\displaystyle\int{p}\,d\mu_{1}=\displaystyle\sup_{\mu\in M(X,S)}\int{p}\,d\mu.$
\end{theorem}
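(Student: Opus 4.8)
The plan is to read the theorem as the conjunction of the two one-sided estimates already isolated as Proposition~\ref{lower} and Proposition~\ref{upper}, and simply to concatenate them: the former supplies $c\cdot h(T)\le h(S)$ and the latter supplies $h(S)\le(\int p\,d\mu_1)h(T)$, where $\int p\,d\mu_1=\sup_{\mu\in M(X,S)}\int p\,d\mu$. Thus no new argument is needed once both bounds are in place, and the substance lies in how each bound is obtained; below I indicate the route I would take to each, since they rest on genuinely different ideas.

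For the lower bound I would run the variational principle for $S$ and then shrink the index set. The starting observation is that every $T$-invariant measure is automatically $S$-invariant: because $p$ is continuous it is locally constant, so $X$ splits into finitely many clopen pieces on each of which $S$ agrees with a fixed power of $T$, and since $S$ is a homeomorphism it merely permutes these pieces by $\mu$-preserving maps. Hence $M(X,T)\subseteq M(X,S)$ and $h(S)=\sup_{\mu\in M(X,S)}h_\mu(S)\ge\sup_{\nu\in\partial_E(M(X,T))}h_\nu(S)$. On an ergodic $T$-invariant $\nu$ I would apply Neveu's formula (Theorem~\ref{Nev}) to rewrite $h_\nu(S)=(\int p\,d\nu)h_\nu(T)$, and then invoke Lemma~\ref{Nic Lemma} to write $p=c+f-f\circ T$, so that the coboundary integrates to zero and $\int p\,d\nu=c$. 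The bound then reduces to $c\sup_{\nu\in\partial_E(M(X,T))}h_\nu(T)=c\cdot h(T)$ by a second use of the variational principle.

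For the upper bound the idea is combinatorial: compare $S$-refinements with $T$-refinements of a clopen partition $\alpha$ chosen to refine the level-set partition $p^{-1}(\Z^+)$ of $p$. With such an $\alpha$ the $S$-name of $x$ determines $p$ along the $S$-orbit, hence determines how far out along the $T$-orbit one has traveled, namely to $T$-time $\sum_{i=0}^{n-1}p(S^i x)$. The crux is to bound this sum uniformly in $x$, which is exactly Lemma~\ref{GammaLemma}: for large $n$ and every $x$ the average $\frac1n\sum_{i=0}^{n-1}p(S^i x)$ stays below $\int p\,d\mu_1+\varepsilon$. This yields the partition comparison $\JOin_{i=0}^{n-1}S^i(\alpha)\le\JOin_{i=0}^{\lceil(n-1)(\int p\,d\mu_1+\varepsilon)\rceil}T^i(\alpha)$, and a direct computation of $h(S,\alpha)$ then gives $h(S,\alpha)\le(\int p\,d\mu_1+\varepsilon)h(T)$; taking the supremum over $\alpha$ and letting $\varepsilon\to0$ completes the estimate.

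I expect the genuine difficulty to sit in the uniform control of Birkhoff averages underlying Lemma~\ref{GammaLemma}: pointwise ergodic averages of $p$ converge only along invariant measures, so promoting this to a single threshold $N$ valid for every $x$ simultaneously must be argued by contradiction, extracting a weak$^*$ limit of empirical measures $\frac1{n_k}\sum_{i=0}^{n_k-1}\delta_{x_{n_k}}\circ S^{-i}$ and using compactness of $M(X)$ together with continuity of $p$ to manufacture an $S$-invariant measure whose $p$-integral exceeds $\sup_{\mu\in M(X,S)}\int p\,d\mu$. The parallel care on the lower-bound side is simply checking that Neveu's hypotheses genuinely hold for each ergodic $\nu$, which the $S$-invariance observation and the finiteness of $\int p\,d\nu=c$ both secure.
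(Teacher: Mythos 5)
Your proposal is correct and follows the paper's proof essentially verbatim: the theorem is indeed obtained by simply concatenating Proposition~\ref{lower} and Proposition~\ref{upper}, and your sketches of the two bounds reproduce the paper's arguments --- the lower bound via the variational principle, Neveu's formula (Theorem~\ref{Nev}), and Lemma~\ref{Nic Lemma} making the coboundary integrate to zero so that $\int p\,d\nu = c$, and the upper bound via the uniform Birkhoff-average estimate of Lemma~\ref{GammaLemma} (proved, as you anticipate, by contradiction through weak$^*$ limits of empirical measures) feeding the partition comparison $\JOin_{i=0}^{n-1}S^i(\alpha)\le\JOin_{i=0}^{\lceil(n-1)(\int p\,d\mu_1+\varepsilon)\rceil}T^i(\alpha)$. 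Your explicit check that $M(X,T)\subseteq M(X,S)$ (via local constancy of $p$) is a small point the paper leaves implicit, but it does not constitute a different route.
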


An immediate corollary of this theorem describes the entropy of $S$ when $M(X,T)=M(X,S)$. Observe that in this case the two systems are orbit equivalent.
\begin{corollary}
	Let $(X,T)$ be a minimal Cantor system and $(X,S)$ a speedup of $(X,T)$ with bounded jump function $p:X\rightarrow\Z^{+}$. If $M(X,T)=M(X,S)$, then
	$$
	h(S)=c\cdot h(T)
	$$
	where $c$ is the orbit number for $T\underset{p}{\leadsto} S$.
\end{corollary}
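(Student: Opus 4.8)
The plan is to derive this directly from Theorem~\ref{entropytheorem} by showing that the hypothesis $M(X,T) = M(X,S)$ forces the upper bound to collapse onto the lower bound. The lower inequality $c \cdot h(T) \le h(S)$ is already supplied by Theorem~\ref{entropytheorem}, so the only work is to verify that the upper bound $\left(\int p\,d\mu_1\right)h(T)$ equals $c \cdot h(T)$ under the stated assumption, where $\mu_1 \in M(X,S)$ is a measure achieving $\int p\,d\mu_1 = \sup_{\mu\in M(X,S)}\int p\,d\mu$.

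First I would invoke Lemma~\ref{Nic Lemma} to write the jump function as $p(x) = c + f(x) - fT(x)$ for some $f \in C(X,\Z)$, so that $p$ differs from the constant $c$ by the $T$-coboundary $f - f\circ T$. Next I would observe that for any $T$-invariant Borel probability measure $\nu$, a coboundary integrates to zero: since $\nu$ is $T$-invariant we have $\int f\circ T\,d\nu = \int f\,d\nu$, hence $\int (f - f\circ T)\,d\nu = 0$ and therefore $\int p\,d\nu = c$ for every $\nu \in M(X,T)$.

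The key step is then to note that the maximizing measure $\mu_1$, which a priori lives in $M(X,S)$, is in fact $T$-invariant: the hypothesis $M(X,T)=M(X,S)$ gives $\mu_1 \in M(X,T)$, so the computation above applies and yields $\int p\,d\mu_1 = c$. Substituting this into the upper bound of Theorem~\ref{entropytheorem} produces $h(S) \le c\cdot h(T)$, and combining with the lower bound gives the desired equality $h(S) = c\cdot h(T)$.

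There is no substantial obstacle here, as the result is genuinely an immediate corollary; the only conceptual point worth flagging is that $\mu_1$ need not be $T$-invariant in general (indeed, the earlier construction in the subsection on introducing new invariant measures shows $M(X,T)$ can be strictly smaller than $M(X,S)$), so the equality $\int p\,d\mu_1 = c$ is precisely what the hypothesis $M(X,T)=M(X,S)$ buys us. The final remark that such $S$ is orbit equivalent to $T$ follows from the characterization in \cite{GPS}, since the two systems share the same simplex of invariant measures.
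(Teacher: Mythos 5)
Your proof is correct and is essentially the argument the paper intends: the corollary follows immediately from Theorem~\ref{entropytheorem} once one notes, via Lemma~\ref{Nic Lemma}, that $p-c$ is a $T$-coboundary and hence $\int p\,d\mu = c$ for every $\mu \in M(X,S) = M(X,T)$, collapsing the upper bound to the lower bound. You correctly identify the role of the hypothesis (ensuring the maximizing measure $\mu_1$ is $T$-invariant), which is exactly the point the paper leaves implicit.
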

\begin{corollary}
	Let $(X,T)$ be a minimal Cantor system and let $(X,S)$ be a bounded speedup of $(X,T)$. If $h(T)=0$, then $h(S)=0$.
\end{corollary}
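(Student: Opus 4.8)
The plan is to derive this from the upper bound already in hand. Entropy is nonnegative, so it suffices to show $h(S)\le 0$. If the speedup $S$ happens to be minimal, the conclusion is immediate: Theorem~\ref{entropytheorem} gives $h(S)\le\left(\int p\,d\mu_1\right)h(T)$, and since $p$ is bounded the factor $\int p\,d\mu_1$ is finite, so the right-hand side collapses to $0$ when $h(T)=0$.

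Since the statement does not assume $S$ is minimal (only that it is a bounded speedup, hence aperiodic), I would not invoke Theorem~\ref{entropytheorem} verbatim, because its proof routes through Lemma~\ref{GammaLemma}, which uses minimality. Instead I would isolate the purely combinatorial part of the upper-bound argument, which needs only boundedness of $p$. Set $M=\sup_{x}p(x)<\infty$ and fix any finite clopen partition $\alpha$ refining the level-set partition $p^{-1}(\Z^{+})$; such $\alpha$ exist because $p$ is continuous with finite range. The key observation is that the $S$-itinerary $x,Sx,\dots,S^{n-1}x$ only visits $T$-iterates $T^{j}x$ with $0\le j<nM$, and on each atom of $\alpha$ the value of $p$ is constant; hence the $T$-$\alpha$-name of $x$ of length $nM$ determines its $S$-$\alpha$-name of length $n$. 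This yields the refinement $\JOin_{i=0}^{n-1}S^{i}(\alpha)\le\JOin_{i=0}^{nM-1}T^{i}(\alpha)$, and therefore $H\left(\JOin_{i=0}^{n-1}S^{i}\alpha\right)\le H\left(\JOin_{i=0}^{nM-1}T^{i}\alpha\right)$.

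Dividing by $n$ and letting $n\to\infty$ then gives $h(S,\alpha)\le M\,h(T,\alpha)\le M\,h(T)=0$. Finally, since every finite clopen partition can be refined to one that also refines $p^{-1}(\Z^{+})$, and refinement does not decrease relative entropy, the supremum defining $h(S)$ is attained over such $\alpha$, so $h(S)=\sup_{\alpha}h(S,\alpha)=0$. The main point to get right is the bookkeeping in the itinerary-determination step: that $M$, rather than $\int p\,d\mu_1$, is the correct uniform stretch factor when minimality is unavailable, and that passing to a partition refining $p^{-1}(\Z^{+})$ loses no entropy. Both are routine once the combinatorial comparison of joins is set up correctly.
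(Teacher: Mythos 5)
Your proof is correct, and it takes a genuinely different (and in one respect stronger) route than the paper. The paper offers no separate proof: the corollary is presented as an immediate consequence of Theorem~\ref{entropytheorem}, i.e.\ $h(S)\le\left(\int p\,d\mu_1\right)h(T)=0$, which silently inherits the hypothesis that $S$ is minimal, since the upper bound of Proposition~\ref{upper} (via Lemma~\ref{GammaLemma}) is stated for minimal $S$ --- whereas the corollary itself assumes only that $S$ is a bounded speedup. You correctly spot this mismatch and repair it by extracting the purely combinatorial core of Proposition~\ref{upper} with the crude uniform bound $M=\sup_x p(x)$ in place of $\int p\,d\mu_1+\varepsilon$: since $p$ is constant on atoms of any clopen $\alpha$ refining $p^{-1}(\Z^+)$, the $\alpha$-atom of $x$ determines $p(x)$, so inductively $S^ix=T^{j_i}x$ with $j_i=\sum_{k<i}p(S^kx)\le iM$, giving
$$
\bigvee_{i=0}^{n-1}S^{i}\alpha\;\le\;\bigvee_{i=0}^{nM-1}T^{i}\alpha,
$$
hence $h(S,\alpha)\le M\,h(T,\alpha)\le M\,h(T)=0$; and since refining $\alpha$ by $p^{-1}(\Z^+)$ only increases $h(S,\cdot)$, the supremum over all clopen partitions is unaffected. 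What the paper's route buys is the sharper multiplicative constant $\int p\,d\mu_1$, which matters for the companion results where $h(T)>0$; what your route buys is elementarity (no invariant measures, no weak$^*$ compactness, no Lemma~\ref{GammaLemma}) and strictly greater generality, as it proves the corollary exactly as stated, without minimality of $S$. Since $h(T)=0$ kills any finite multiplicative constant, trading $\int p\,d\mu_1$ for $M$ costs nothing here.
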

\begin{corollary}
	Let $(X,T)$ be a minimal Cantor system and let $(X,S)$ be a bounded speedup of $(X,T)$. If $h(T)>0$, then $h(S)>0$. Moreover, $h(S)\ge h(T)$ with a strict inequality when the jump function $p\not\equiv 1$.
\end{corollary}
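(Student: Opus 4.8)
The plan is to extract all three assertions from the lower bound $h(S) \ge c \cdot h(T)$ supplied by Proposition~\ref{lower}, where $c$ is the orbit number for $T \pleadsto S$. Since $c$ is a positive integer we have $c \ge 1$, so $h(S) \ge c \cdot h(T) \ge h(T)$ at once. This already gives the unconditional inequality $h(S) \ge h(T)$, and under the hypothesis $h(T) > 0$ it gives $h(S) \ge h(T) > 0$, which is the first claim. Thus the only real content is the strict inequality in the case $p \not\equiv 1$.

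For that I would show that $p \not\equiv 1$ forces the orbit number to satisfy $c \ge 2$. The key step is Lemma~\ref{Nic Lemma}, which writes $p = c + f - f\circ T$ for some $f \in C(X,\Z)$. Integrating against an arbitrary $T$-invariant Borel probability measure $\nu$ kills the coboundary term, so $\int p \, d\nu = c$. Because $p$ takes values in $\Z^{+}$ we have $p \ge 1$ pointwise, whence $c = \int p \, d\nu \ge 1$; and if $c = 1$ then $\int (p-1)\,d\nu = 0$ with the integrand nonnegative, so $p = 1$ holds $\nu$-almost everywhere.

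To upgrade ``$p=1$ $\nu$-a.e.'' to ``$p\equiv 1$'' I would invoke minimality of $T$: the support of $\nu$ is a nonempty closed $T$-invariant subset of $X$, hence equals $X$, so $\nu$ has full support. Since $p$ is continuous, the set $\{x : p(x) > 1\}$ is open; were it nonempty it would carry positive $\nu$-measure, contradicting $p = 1$ $\nu$-a.e. Therefore $c = 1$ implies $p \equiv 1$, and contrapositively $p \not\equiv 1$ yields $c \ge 2$. Combining with Proposition~\ref{lower} gives $h(S) \ge c \cdot h(T) \ge 2\,h(T) > h(T)$ whenever $h(T) > 0$, which completes the strict inequality. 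I expect no serious obstacle: once Lemma~\ref{Nic Lemma} provides the identity $\int p\,d\nu = c$, everything is formal. The single point meriting care is the passage from an almost-everywhere statement to an everywhere statement, which genuinely relies on full support of invariant measures under minimality rather than on any purely measure-theoretic fact.
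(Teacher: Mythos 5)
Your proof is correct and follows exactly the route the paper intends: the corollary is stated as an immediate consequence of the lower bound $h(S)\ge c\cdot h(T)$ of Proposition~\ref{lower}, with the strict inequality coming from the observation that $p\not\equiv 1$ forces $c\ge 2$, via the identity $\int p\,d\nu=c$ obtained by integrating the decomposition of Lemma~\ref{Nic Lemma} against a $T$-invariant measure. Your careful upgrade from ``$p=1$ $\nu$-a.e.'' to ``$p\equiv 1$'' using full support of invariant measures under minimality is precisely the detail the paper leaves implicit, and it is handled correctly.
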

\begin{remark}
	We would like to emphasize two general observations about the entropy of topological speedups. First, it follows from Theorem~\ref{entropytheorem} that the entropy of a bounded speedup can only increase, whereas in the unbounded case entropy can decrease. 
	
	Second, the only instances where a bounded speedup of a minimal Cantor system could possibly be conjugate to the original system is when the original system has entropy $0$ or $\infty$.
\end{remark}

\section{Odometers}
In this section we investigate bounded speedups of odometers, a family of minimal Cantor
systems with zero entropy. We begin with some preliminaries on odometers and then characterize which odometers have minimal bounded topological speedups of a particular orbit number 
(see \cite{BK} for a more thorough introduction to odometers). 

\subsection{Background on Odometers} 

Let $\alpha = \langle \alpha_1,\alpha_2,\alpha_3,\ldots \rangle$ be a sequence of integers with each $\alpha_i \geq 2$.  Denote by $X_\alpha$ the set of all sequences $(a_1,a_2,\ldots)$ such that $0\leq a_i\leq \alpha_i-1$ for each $i\geq 1$.  We apply the metric $d_\alpha$ to $X_\alpha$ by $$d_\alpha\left((x_1,x_2,\ldots),(y_1,y_2,\ldots)\right) = \sum_{t=1}^\infty \frac{\delta(x_i,y_i)}{2^i},$$ where $\delta(x_i,y_i) = 0$ if $x_i = y_i$ and $\delta(x_i,y_i) = 1$ if $x_i\neq y_i$.  

The set $X_{\alpha}$ is the set of $\alpha$-adic numbers with 
addition on $X_\alpha$ defined as follows. 
Set $$(x_1,x_2,\ldots) + (y_1,y_2,\ldots) = (z_1,z_2,\ldots)$$ where $z_1 = (x_1+y_1) \mod \alpha_1$, $r_1=0$ and for each $j\geq 2$, 
$z_j = (x_j +y_j + r_j)\mod \alpha_j$ with $r_j = 0$ if 
$x_{j-1}+y_{j-1} + r_{j-1} < \alpha_{j-1}$ and $r_{j} = 1$ otherwise.  

The map $T_\alpha:X_\alpha \to X_\alpha$, defined by $$T_\alpha\left((x_1,x_2,x_3,\ldots)\right) = (x_1,x_2,x_3,\ldots) + (1,0,0,\ldots),$$ is called the {\em $\alpha$-adic odometer} or {\em $\alpha$-adic adding machine map}. It is straightforward to see that the system $(X_{\alpha},T_{\alpha})$ is a minimal Cantor system. 
We will make use of the following results about odometer systems.

\begin{theorem}\label{BKTHM}\cite{BK,BS} Let $\alpha = \langle \alpha_1,\alpha_2,\ldots\rangle$ be a sequence of integers greater than 1.  Let $m_k = \alpha_1\alpha_2\cdots \alpha_k$ for each $i$.  Let $T:X\rightarrow X$ be a continuous map of a compact topological space $X$.  Then $T$ is topologically conjugate to $T_\alpha$ if and only if there is a sequence of partitions $\{ \mathcal{P}(k):k \geq 1\}$
of $X$ such that the following hold.
	\begin{enumerate}
		\item For each positive integer $k$, the partition 
		$\mathcal{P}(k)$ consists of $m_k$ nonempty, 
		clopen sets which are cyclically 		
			permuted by $T$.
		\item For all $k\geq 1$, $\mathcal{P}(k+1)$ refines $\mathcal{P}(k)$.
		\item The sequence of partitions $\{ \mathcal{P}(k)\}$ separates points. 
	\end{enumerate}
\end{theorem}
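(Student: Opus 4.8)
The plan is to prove both implications by translating between the $\alpha$-adic coordinates of a point in $X_\alpha$ and the sequence of partition elements it lies in. The forward (``only if'') direction is essentially a matter of exhibiting the canonical partitions on $X_\alpha$ itself and pushing them through the conjugacy. On $X_\alpha$, for each $k$ let $\mathcal{P}(k)$ consist of the $m_k$ cylinder sets obtained by fixing the first $k$ coordinates $(a_1,\ldots,a_k)$; these are clopen because they depend on finitely many coordinates, there are exactly $m_k=\alpha_1\cdots\alpha_k$ of them, and the definition of $\alpha$-adic addition shows that $T_\alpha$ sends the cylinder whose first $k$ digits encode the value $j$ (in mixed radix) to the cylinder encoding $j+1 \bmod m_k$, so $T_\alpha$ cyclically permutes them. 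Refinement holds because fixing $k+1$ coordinates determines the first $k$, and the family separates points because the metric $d_\alpha$ registers any disagreement in some coordinate. If $\psi:X\to X_\alpha$ is a conjugacy, then $\{\psi^{-1}\mathcal{P}(k)\}$ is the desired family on $X$.

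For the (``if'') direction I would build the conjugacy $h:X\to X_\alpha$ directly from the given partitions. First fix a base point $x_0\in X$ and, for each $k$, label by $P_0^{(k)}$ the unique element of $\mathcal{P}(k)$ containing $x_0$ and set $P_j^{(k)}=T^j P_0^{(k)}$ for $0\le j<m_k$; because $T$ cyclically permutes $\mathcal{P}(k)$ this is a well-defined labeling with $T P_j^{(k)}=P_{j+1\bmod m_k}^{(k)}$ and with $T^{m_k}$ fixing each $P_j^{(k)}$ setwise. The crucial compatibility is that $P_j^{(k+1)}\subseteq P_{j\bmod m_k}^{(k)}$: indeed $P_0^{(k+1)}\subseteq P_0^{(k)}$ since $\mathcal{P}(k+1)$ refines $\mathcal{P}(k)$ and both contain $x_0$, and applying $T^j$ together with $T^{m_k}P_0^{(k)}=P_0^{(k)}$ gives the general case. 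Writing $n_k(x)$ for the index with $x\in P_{n_k(x)}^{(k)}$, this compatibility says $n_k(x)\equiv n_{k+1}(x)\pmod{m_k}$, so the mixed-radix digits $a_i(x)$ extracted from $n_k(x)$ (for any $k\ge i$) are well defined, and I set $h(x)=(a_1(x),a_2(x),\ldots)$.

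It then remains to verify that $h$ has the required properties, each of which is short. Continuity is immediate since every coordinate $a_i$ is locally constant on the clopen partition; injectivity follows from the separation hypothesis; surjectivity follows by intersecting the nested nonempty compact sets $P_{n_k}^{(k)}$ prescribed by a target sequence. Thus $h$ is a continuous bijection of compact Hausdorff spaces, hence a homeomorphism. The conjugacy identity $h\circ T=T_\alpha\circ h$ is exactly the statement that $n_k(Tx)=n_k(x)+1\bmod m_k$ for all $k$, which is precisely how $\alpha$-adic addition of $(1,0,0,\ldots)$ acts on coordinates, carries included; note that this also shows $T$ is automatically a homeomorphism even though only continuity was assumed.

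The main obstacle is the labeling step: the hypotheses only assert that each $\mathcal{P}(k)$ is cyclically permuted, with no a priori coherence between the cyclic orderings at different levels, so without care the extracted digits need not be mutually consistent. The base-point normalization $x_0\in P_0^{(k)}$ together with the identity $T^{m_k}P_0^{(k)}=P_0^{(k)}$ is what forces $n_k\equiv n_{k+1}\pmod{m_k}$ and guarantees that $h$ actually lands in $X_\alpha$ and intertwines the two maps; everything else reduces to routine compactness and separation arguments.
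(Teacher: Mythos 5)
Your proof is correct, but note that the paper offers no proof of this statement to compare against: Theorem~\ref{BKTHM} is quoted from the literature (Block--Keesling and Buescu--Stewart, \cite{BK,BS}), and your argument is essentially the standard one from those sources. The forward direction via cylinder sets is routine, and your converse correctly isolates the one real issue --- coherence of the cyclic labelings across levels --- and resolves it with the base-point normalization $x_0\in P_0^{(k)}$ for all $k$, which together with refinement forces $P_j^{(k+1)}\subseteq P_{j\bmod m_k}^{(k)}$ and hence $n_{k+1}(x)\equiv n_k(x)\pmod{m_k}$, so the mixed-radix digits are well defined and $h$ lands in $X_\alpha$ and intertwines $T$ with $T_\alpha$. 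Two small points of hygiene: first, writing $P_j^{(k)}=T^jP_0^{(k)}$ tacitly reads ``cyclically permuted'' as meaning $T$ carries each partition element \emph{onto} another (a genuine permutation of the $m_k$ sets); this is the intended reading, but if one only had $T(P_j^{(k)})\subseteq P_{j+1\bmod m_k}^{(k)}$ the argument still goes through by instead defining $P_j^{(k)}$ as the element of $\mathcal{P}(k)$ containing $T^jx_0$, since the conjugacy identity only needs $n_k(Tx)\equiv n_k(x)+1\pmod{m_k}$. Second, the hypotheses say only ``compact topological space,'' not Hausdorff; your appeal to ``continuous bijection of compact Hausdorff spaces'' is nonetheless legitimate because a separating sequence of clopen partitions forces $X$ to be Hausdorff (distinct points lie in disjoint clopen sets), and in any case all that is needed is compactness of $X$ and Hausdorffness of the metric space $X_\alpha$. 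Your closing observation that the conjugacy retroactively shows $T$ is a homeomorphism, despite only continuity being assumed, is a nice touch and matches the formulation in the cited references.
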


From Theorem \ref{BKTHM}, it follows that the partitions $\mathcal{P}(k)$ associated
with an odometer $T=T_{\alpha}$ are of the form 
$$\mathcal{P}(k) = \{ T^i A(k) : 0 \leq i < m_k \}$$
where $\cap_k A(k)$ is a singleton $\{x_0\}$ and 
$T^j A(k+1)  \subset T^i A(k)$ if and only if $j \equiv i \mod m_k$.
The partitions $\mathcal{P}(k)$ are the relevant \emph{Kakutani-Rokhlin partitions} for odometers. 
A Kakutani-Rokhlin partition, or a KR-partition, for a minimal Cantor system $(X,T)$ is a partition of the 
form $\mathcal{P} = \{ T^j A_i : 0 \leq j < l_i, 1 \leq i \leq I \}$ where 
each set $A_i$ is clopen. 
The sets $T^jA_i$ are called the \emph{floors} of the KR-partition and $j$ is the height of the floor $T^jA_i$. 
The set $\{ T^j A_i : 0 \leq j < l_i \}$ is referred to as the \emph{$i$th column} of $\mathcal{P}$
and $l_i$ is the \emph{height} of this column. We will call 
the set $\cup_i A_i$ the \emph{base} of $\mathcal{P}$ and 
the set $\cup_i T^{l_i-1}A_i$ the \emph{top} of $\mathcal{P}$.

Theorem  \ref{BKTHM} states that odometers are characterized by the existence of a generating 
sequence of KR-partitions, each comprised of a single column. In such a situation 
we will suppress the subscripts on the base sets $A(k)=A_1(k)$. 
KR-partitions will also play an important role in our discussion of substitution systems 
in Section \ref{sub}. 

The following provides a means for determining when two odometers are topologically conjugate. 
\begin{lemma}\cite[Corollaries 2.6 and 2.8]{BK}\label{Smeasure}
		Let $\beta = \langle \beta_1, \beta_2, \cdots \rangle$ and $\gamma = \langle \gamma_1,\gamma_2,\ldots \rangle$ be such that $\beta_i,\gamma_i \geq 2$ for all $i\in \N$. Set $\mathcal{S}(T_\beta)$ to be the collection of positive integers $k$ such that for some subset $M$ of $X$, $M$ is $T_\beta^i$ minimal but not $T_\beta^j$-minimal for $j<i$.  Then $T_\beta$ and $T_\gamma$ are topologically conjugate if and only if $\mathcal{S}(T_\beta) = \mathcal{S}(T_\gamma)$.
		
		Moreover, let $M_\beta$ be the function mapping the prime numbers to the extended natural numbers $\{0,1,2,\ldots,\infty\}$ by $M_\beta(p) = \sum_{i=1}^\infty n_i$ where $n_i$ is the power of the prime $p$ in the prime factorization of $\beta_i$. Then $T_\beta$ and $T_\gamma$ are topologically conjugate if and only if $M_\beta = M_\gamma$.
	\end{lemma}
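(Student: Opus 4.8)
The plan is to realize each odometer as a translation on a profinite group and to read off both invariants $\mathcal{S}(T_\alpha)$ and $M_\alpha$ from the associated supernatural number. Concretely, set $m_k = \alpha_1\cdots\alpha_k$ and identify $(X_\alpha,T_\alpha)$ with the inverse limit $\varprojlim \mathbb{Z}/m_k\mathbb{Z}$, in which $T_\alpha$ is translation by the canonical generator $\mathbf{1}=(1,0,0,\ldots)$. Writing $N_\alpha = \prod_p p^{M_\alpha(p)}$ for the supernatural (Steinitz) number, the exponent of $p$ in $m_k$ increases to $M_\alpha(p)$, so $N_\alpha$ is exactly the data recorded by $M_\alpha$. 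This makes the equivalence $M_\beta = M_\gamma \iff N_\beta = N_\gamma$ a tautology and reduces the whole lemma to relating topological conjugacy and $\mathcal{S}(T_\alpha)$ to $N_\alpha$.

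First I would dispatch both ``only if'' directions at once by noting that $\mathcal{S}(T_\alpha)$ is manifestly a topological-conjugacy invariant: a conjugacy $h$ intertwining $T_\beta$ and $T_\gamma$ carries $T_\beta^k$-minimal sets to $T_\gamma^k$-minimal sets for every $k$, hence preserves $\mathcal{S}$. Thus $T_\beta \cong T_\gamma$ forces $\mathcal{S}(T_\beta)=\mathcal{S}(T_\gamma)$, and so (via the identification below) $M_\beta = M_\gamma$. The substantive content of the first biconditional is then the computation of $\mathcal{S}(T_\alpha)$. For a positive integer $k$ with $p$-adic valuations $v_p(k)$, the map $T_\alpha^k$ is translation by $k$, so its minimal sets are precisely the cosets of the closed subgroup $\overline{k\mathbb{Z}}$, which has index $d_k = \prod_p p^{\min(v_p(k),\,M_\alpha(p))}$. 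A short argument shows that the least power $j$ for which such a coset is $T_\alpha^j$-minimal equals this index $d_k$ itself (the coset is $T_\alpha^j$-minimal iff $\overline{j\mathbb{Z}}=\overline{k\mathbb{Z}}$, and the smallest such $j$ has valuations $\min(v_p(k),M_\alpha(p))$). Letting $k$ range over $\mathbb{Z}^+$ then yields $\mathcal{S}(T_\alpha)=\{\,d\in\mathbb{Z}^+ : d \mid N_\alpha\,\}$, the set of finite divisors of $N_\alpha$. Since the set of finite divisors of a supernatural number determines it, we get $\mathcal{S}(T_\beta)=\mathcal{S}(T_\gamma) \iff N_\beta = N_\gamma \iff M_\beta = M_\gamma$.

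It remains to prove the single substantive implication $N_\beta = N_\gamma \Rightarrow T_\beta \cong T_\gamma$, which I expect to be the main obstacle. The idea is that equal supernatural numbers make the two sequences $\langle m_k^\beta\rangle$ and $\langle m_k^\gamma\rangle$ mutually cofinal for divisibility: every $m_k^\beta$ divides some $m_\ell^\gamma$ and vice versa. Interleaving them produces a single increasing sequence $\langle r_j\rangle$ of orders with the same supernatural number through which both inverse systems factor cofinally, so $\varprojlim \mathbb{Z}/m_k^\beta\mathbb{Z} \cong \varprojlim \mathbb{Z}/r_j\mathbb{Z} \cong \varprojlim \mathbb{Z}/m_k^\gamma\mathbb{Z}$ as topological groups by an isomorphism sending $\mathbf{1}$ to $\mathbf{1}$; such an isomorphism is exactly a conjugacy of the two translation maps. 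Equivalently, and more in keeping with the toolkit of this paper, one can feed the common refining sequence into Theorem~\ref{BKTHM}: the nested cyclic partitions coming from the $r_j$ simultaneously refine generating KR-partition sequences for $T_\beta$ and for $T_\gamma$, exhibiting both as conjugate to the odometer $T_{\langle r_1,\,r_2/r_1,\,r_3/r_2,\ldots\rangle}$. The delicate points are arranging the interleaving so that the divisibilities $m_k^\beta \mid r_j \mid m_{k'}^\gamma$ alternate consistently (this is where $N_\beta=N_\gamma$ is used prime by prime, including primes with infinite exponent) and checking that the resulting group isomorphism respects the distinguished generator, so that it is a genuine dynamical conjugacy and not merely a homeomorphism.
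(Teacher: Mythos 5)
Your proof is correct, but there is no internal proof to compare it against: the paper imports this lemma verbatim from Block and Keesling \cite{BK} (their Corollaries 2.6 and 2.8) and gives no argument, so you have in effect supplied the missing one. Your route is the algebraic classification: realize $(X_\alpha,T_\alpha)$ as translation by $\mathbf{1}$ on $\varprojlim \mathbb{Z}/m_k\mathbb{Z}$ and read everything off the Steinitz number $N_\alpha$. Block and Keesling instead work topologically and combinatorially, deriving these corollaries from their partition characterization of adding machines (Theorem~\ref{BKTHM} of this paper) by analyzing minimal sets of powers directly through the nested cyclic partitions $\mathcal{P}(k)$; your second suggested construction, feeding the interleaved tower of orders $r_j$ into Theorem~\ref{BKTHM}, essentially reproduces their style of argument, while your computation of $\mathcal{S}(T_\alpha)$ as the set of finite divisors of $N_\alpha$ is arguably cleaner, since both biconditionals fall out of the single invariant $N_\alpha$. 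The steps you flag as delicate are in fact immediate, and your sketch of them is sound: mutual cofinality holds prime by prime because $v_p(m_k^{\beta}) \leq M_\beta(p) = M_\gamma(p)$ while $v_p(m_\ell^{\gamma}) \to M_\gamma(p)$ as $\ell \to \infty$ (only the finitely many primes dividing $m_k^{\beta}$ matter, and primes with infinite exponent are the easy case), and the bonding maps of the interleaved tower are the canonical reductions, each sending $1$ to $1$, so the resulting topological group isomorphism carries $\mathbf{1}$ to $\mathbf{1}$ and hence intertwines the translations. Your key computation also checks out: a coset of $\overline{\langle k \rangle}$ is $T^j$-minimal precisely when $\overline{\langle j \rangle} = \overline{\langle k \rangle}$, since the $T^j$-orbit closure of any of its points is the corresponding coset of $\overline{\langle j \rangle}$; the least such $j$ is $d_k = \prod_p p^{\min(v_p(k),\,M_\alpha(p))}$, so $k \in \mathcal{S}(T_\alpha)$ if and only if $k = d_k$, if and only if $v_p(k) \leq M_\alpha(p)$ for all $p$, i.e., $k$ divides $N_\alpha$ --- and the finite divisor set determines $N_\alpha$, closing the loop $\mathcal{S}(T_\beta)=\mathcal{S}(T_\gamma) \Leftrightarrow N_\beta = N_\gamma \Leftrightarrow M_\beta = M_\gamma$.
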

	
Hence, if for each prime number $p$ we have $M_\beta(p) = \infty$, then $T_\beta$ is topologically conjugate to both $T_\alpha$ where $\alpha = \langle 2, 2\cdot 3, 2\cdot 3\cdot 5, \ldots \rangle$ and $T_\gamma$ where $\gamma = \langle 2, 3, 4, 5, \ldots \rangle$.  Additionally, for any given $\alpha$, rearranging the $\alpha_i$ values does not change the conjugacy class of the odometer $T_\alpha$.

	\subsection{Speedups of Odometers}
	
In this section we focus on bounded speedups of odometers. We first show that a minimal bounded speedup of an odometer is a conjugate odometer. We then investigate which odometers have nontrivial minimal 
bounded speedups and conclude by characterizing the jump functions $p$ which 
produce a nontrivial minimal speedup of a given odometer.	

	\begin{theorem}\label{sameodom}
Let $(X,T)=(X_\alpha, T_\alpha)$ be an odometer and suppose $T \pleadsto S$
where $S:X \to X$ is minimal. 
Then $(X,S)$ is topologically conjugate to $(X,T)$.
Moreover, $p(x)-c$ is an $S$-coboundary where $c$ is the orbit
number for $T \pleadsto S$. 
	\end{theorem}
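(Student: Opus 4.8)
The plan is to exploit the rigidity of the odometer's own Kakutani--Rokhlin partitions, applying Theorem~\ref{BKTHM} with $S$ in place of the generating map. Write $\mathcal{P}(k)=\{F_0,\dots,F_{m_k-1}\}$ with $F_i=T^iA(k)$ for the odometer partitions, where $m_k=\alpha_1\cdots\alpha_k$, and set $M=\sup_x p(x)$. Since $p$ is continuous it takes finitely many values and each level set $p^{-1}(z)$ is clopen; because the $\mathcal{P}(k)$ separate points and refine one another, there is a $k_0$ so that $p$ is constant on every floor of $\mathcal{P}(k)$ for all $k\ge k_0$. Fixing such a $k$ and writing $p_i$ for the value of $p$ on $F_i$, one computes
$$
S F_i = T^{p_i}F_i = T^{\,i+p_i}A(k) = F_{(i+p_i)\bmod m_k}.
$$
Thus $S$ carries each floor onto another floor, inducing a map $\sigma_k(i)=(i+p_i)\bmod m_k$; since $S$ is a homeomorphism and the floors are disjoint, $\sigma_k$ is a bijection.

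The first key step is that minimality of $S$ forces $\sigma_k$ to be a single $m_k$-cycle: if $\sigma_k$ decomposed into two or more cycles, the union of the floors indexed by one cycle would be a proper, nonempty, clopen, $S$-invariant set, contradicting minimality. Hence, for every $k\ge k_0$, the collection $\mathcal{P}(k)$ is a single-column Kakutani--Rokhlin partition into $m_k$ clopen sets cyclically permuted by $S$, and the refinement and point-separation properties are inherited unchanged from the odometer. Applying Theorem~\ref{BKTHM} to the cofinal sequence $\{\mathcal{P}(k):k\ge k_0\}$ with the map $S$ shows that $(X,S)$ is conjugate to the odometer $T_\beta$ for $\beta=\langle m_{k_0},\alpha_{k_0+1},\alpha_{k_0+2},\dots\rangle$. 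Grouping the first $k_0$ coordinates into a single term does not change the associated supernatural number, so $M_\beta=M_\alpha$ and Lemma~\ref{Smeasure} gives $T_\beta\cong T_\alpha$; therefore $(X,S)\cong(X,T)$.

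For the ``moreover'' claim I would track the total $T$-displacement accumulated over one $\sigma_k$-cycle. As $j$ runs over $0,\dots,m_k-1$ the point $S^j(x)$ visits each floor exactly once (because $\sigma_k$ is an $m_k$-cycle), so
$$
S^{m_k}(x) = T^{Q_k}(x), \qquad Q_k=\sum_{i=0}^{m_k-1}p_i,
$$
independently of $x$. Each floor has measure $1/m_k$ for the unique invariant measure $\mu$ of the odometer, so $Q_k/m_k=\int p\,d\mu$, and by Lemma~\ref{Nic Lemma} together with $T$-invariance of $\mu$ this integral equals the orbit number $c$; hence $Q_k=c\,m_k$ and $S^{m_k}=T^{c m_k}$ as maps of $X$. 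Writing $n=qm_k+r$ with $0\le r<m_k$ and setting $P_n(x_0)=\sum_{j=0}^{n-1}p(S^jx_0)$, so that $S^n(x_0)=T^{P_n(x_0)}(x_0)$, the relation $S^{m_k}=T^{cm_k}$ gives $P_n(x_0)-cn=P_r(T^{cqm_k}x_0)-cr$, whose absolute value is at most $m_k\max(M,c)$ uniformly in $n$. Thus the partial sums of $p-c$ along the $S$-orbit of $x_0$ are bounded, and Gottschalk--Hedlund (Theorem~\ref{GH}) yields that $p-c$ is an $S$-coboundary.

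The main obstacle is the structural observation underlying the first two paragraphs: that $S$ respects the odometer's own tower partitions, and that minimality collapses the induced floor permutation to a single cycle. Once this rigidity is established, identifying the target odometer via Lemma~\ref{Smeasure} and producing the coboundary via $S^{m_k}=T^{cm_k}$ and Gottschalk--Hedlund are comparatively routine. A secondary point needing care is the identification of the per-cycle displacement constant with the orbit number $c$, since it is precisely this equality that makes the partial sums of $p-c$ — rather than those of $p$ against some other constant — remain bounded.
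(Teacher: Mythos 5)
Your proposal is correct and follows essentially the same route as the paper: constancy of $p$ on the floors of $\mathcal{P}(k)$ forces $S$ to permute the floors, minimality makes that permutation a single $m_k$-cycle, Theorem~\ref{BKTHM} and Lemma~\ref{Smeasure} identify $(X,S)$ with $(X,T_\alpha)$, and the relation $S^{m_k}=T^{cm_k}$ bounds the partial sums of $p-c$ so that Gottschalk--Hedlund (Theorem~\ref{GH}) yields the $S$-coboundary. One small improvement on the paper: where it simply names the constant with $cm_k=p(x,m_k)$, you explicitly verify that this constant equals the orbit number by computing $Q_k/m_k=\int p\,d\mu$ and integrating the decomposition of Lemma~\ref{Nic Lemma} against the unique $T$-invariant measure, a step the paper leaves implicit.
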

	\begin{proof}
Let $T=T_{\alpha}$ be the odometer with $\alpha = \langle \alpha_1, \alpha_2, \alpha_3, \ldots \rangle$. Consider the sequence of partitions $\{\mathcal{P}(k)\}$ of $X$ as in Theorem \ref{BKTHM} for $T$ and $\alpha$. Because  $\{\mathcal{P}(k) \}$ separates points and $p$ is uniformly continuous, there is an $N$ such that if $k\geq N$ and $x,y$ are in the same element of $\mathcal{P}(k)$ then $p(x)=p(y)$. 

Consider the sets of the form $S^j A(k)$ for $j \geq 1$. Since $p$ is constant on elements of $\mathcal{P}(k)$, each set $S^jA(k)$ must be of the form $T^i A(k)$, another element of $\mathcal{P}(k)$. Because $S$ is a minimal homeomorphism of $X$, it follows that $S$ acts as a cyclic permutation on elements of $\mathcal{P}(k)$. By Theorem \ref{BKTHM}, 
$S$ is topologically conjugate to an odometer
$T_{\beta}$ where $\beta=\langle m_N, \alpha_{N+1}, \alpha_{N+2},\ldots \rangle$. 
By Theorem \ref{Smeasure}, the odometers $T_{\beta}$ and $T_{\alpha}$ are conjugate. 

To see that $p(x)$ is a constant plus $S$-coboundary, 
let $m \geq 1$ be the smallest positive integer such that
$S^m A(k) = A(k)$. Because $S$ acts as a cyclic permutation on a set of $m_k$ elements, 
$m=m_k$. Fix $x \in A(k)$. Then $S^m(x) = T^{p(x,m)}(x)$ where
$$p(x,m) = \sum_{i=0}^{m-1} p(S^ix)$$
Since $T^{p(x,m)}A(k) = S^m A(k) = A(k)$, $m$ must divide $p(x,m)$, i.e., $c m = p(x,m)$ for some $c >0$. Because $p$ is constant on elements of $\mathcal{P}(k)$, 
$c m  = p(z,m)$ for all $z \in A(k)$. Fix $N \geq 1$ and write 
$N = q m + r$ for $q > 0$ and $0 \leq r < m$. Then we have
\begin{align*}
\sum_{j=0}^{N-1} pS^j(x) - c N  
& = q c m + \sum_{j=0}^{r} pS^{qm+j}(x) - cN \\
& = \sum_{j=0}^{r} pS^{qm+j}(x) - c r 
\end{align*}
which is uniformly bounded. Therefore by Theorem \ref{GH}, $p(x)-c$ is an $S$-coboundary. 
\end{proof}
	
\begin{remark} \label{remex}
The above does not imply $(X,S)$ is conjugate
to $(X,T^c)$ since $(X,T^c)$ may not be minimal. An example of this is the odometer
$T_{\alpha}$ with $\alpha = \langle 4,3,3,\ldots \rangle$ and the jump function 
$p$ where $$p(x) = \begin{cases}  
2 & {\rm for }\ x \in A(1) \\
2 & {\rm for }\ x \in TA(1) \\
3 & {\rm for }\  x \in T^2A(1) \\
1 & {\rm for }\  x \in T^3A(1) 
\end{cases}
$$
Here $(X_{\alpha},S)$ is minimal with $c = 2$, 
but $(T_{\alpha})^2$ is not a minimal action of $X_{\alpha}$. 

Moreover, this example illustrates the potential difference between 
$T^c$ and a bounded speedup with orbit number $c$. 
\end{remark}

Next we address the question of which odometers $(X_{\alpha},T_{\alpha})$ 
admit a bounded minimal speedup with a particular orbit number $c$. 
	
\subsection{A necessary condition for minimality}

Let $(X,T)$ be an odometer system and suppose $T \pleadsto S$ where 
$S:X \to X$ is a homeomorphism. We wish to write sufficient conditions
so that $S$ is minimal. 

Note that because the function $p$ is continuous and $\mathcal{P}(k)$ separates points,  
for sufficiently large $k$, $p$ is constant on each element of the KR-partition $\mathcal{P}(k)$. 
Further note that since the heights of the KR-partitions $\mathcal{P}(k)$ go to infinity, 
for sufficiently large $k$, $\sup_{x \in X} p(x)$ is smaller than $m_k$, 
the height of the unique column of $\mathcal{P}(k)$. 

We may introduce a \emph{labeling} $\mathcal{L}_k$ of $\mathcal{P}(k)$ 
based on $S$-paths through $\mathcal{P}(k)$. 
Label the base floor $A(k)$ with $0$. 
Label any other floor $F$ in this column with a $0$ 
if $F = S^jA(k)$ and $\sum_{n=0}^{j-1} pS^n(x) < l(k)$ for $x \in A(k)$. 
Now consider an unlabeled floor $T^{j_1}A(k)$ of minimum height $j_1$. 
Label this floor with a $1$. Label any other floor $F$ in this column with a $1$ if 
$F = S^jA(k)$ and $\sum_{n=0}^{j-1} pS^n(x) < l(k)$ for $x \in T^{j_1} A(k)$.
Now label the minimum height unlabeled floor with a $2$ and continue in this 
manner until all floors in the $i$th column are labeled with a label
$0,1,\ldots , c-1$. 

The labeling has the property that if a floor $F$ is the 
$S$-image of a floor $E$ of lower height then $E$ and $F$ have the same labeling.
It is also the case that if $F$ is a floor of $\mathcal{P}(k)$ labeled $\ell$ 
and is not the floor of maximal height with this property then 
$SF$ is equal to the next higher floor which has label $\ell$. 

The sequence of labelings have the property that if $F$ in $\mathcal{P}(k+1)$ is a
floor with $\mathcal{L}_k$-label $\ell$ and height less than $m_k$ then the
floor $E$ in $\mathcal{P}(k)$ with $E \supset F$ has 
$\mathcal{L}_{k}$-label $\ell$ as well. 

For $k$ large enough so that $p$ is constant on each floor of $\mathcal{P}(k)$ and 
$l(k)>\sup p(x)$, we will define a function 
$\pi^{(k)}: \{0,1,\ldots, c-1\} \to \{0,1,\ldots, c-1\}$.
For $0 \leq \ell < c$, 
define $\pi^{(k)}(\ell)$ to be the label of the floor containing $S(x)$ for all 
$x$ in the floor of maximal height in $\mathcal{P}(k)$
\begin{proposition}
	Each $\pi^{(k)}_i$ is a permutation of the set $\{0,1, \ldots ,c-1\}$.
\end{proposition}
\begin{proof}
	If $\pi^{(k)}$ is not injective then there are two distinct points 
	in the same $T$-orbit with the same $S$-image. But $S$ is a homeomorphism, 
	so this is a contradiction. Therefore, $\pi^{(k)}$ is injective, and therefore
	a permutation. 
\end{proof}

Note further the following relation for the $\alpha$-odometer. 
\begin{lemma} \label{permpower}
$$\pi^{(k+1)} =\underbrace{\pi^{(k)}\pi^{(k)}\cdots \pi^{(k)}}_{\alpha_{k+1}} = (\pi^{(k)})^{\alpha_{k+1}}$$
\end{lemma}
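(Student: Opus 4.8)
The plan is to exhibit the column of $\mathcal{P}(k+1)$ as $\alpha_{k+1}$ stacked copies of the column of $\mathcal{P}(k)$ and to follow each $S$-labeling thread as it climbs from one copy to the next, picking up a factor of $\pi^{(k)}$ at every transition. Write $m_{k+1}=\alpha_{k+1}m_k$ and, for $0\le s<\alpha_{k+1}$, call the floors of $\mathcal{P}(k+1)$ of height in $[sm_k,(s+1)m_k)$ the \emph{$s$-th block}; the local height of such a floor is its height modulo $m_k$. Since $\mathcal{P}(k+1)$ refines $\mathcal{P}(k)$ and $p$ is constant on floors of $\mathcal{P}(k)$, the jump value $p$ at a floor depends only on its local height, so inside every block the map $S$ reproduces exactly the $S$-dynamics on the column of $\mathcal{P}(k)$ that was used to define $\mathcal{L}_k$. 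I assign to each floor a \emph{local label}, namely the $\mathcal{L}_k$-label of the $\mathcal{P}(k)$-floor of the same local height, and I keep its \emph{global label} $\mathcal{L}_{k+1}$.

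First I would record two elementary facts about a single thread. Because each jump has size at most $\sup p<m_k$, the maximal-height floor of any global label (the one whose $S$-image wraps around the whole $\mathcal{P}(k+1)$ column) has height $\ge m_{k+1}-m_k$, hence lies in the top block. Using that $\pi^{(k)}$ is a permutation, one checks conversely that every start (a floor with no $S$-preimage inside the column) lies in block $0$; since consecutive floors of a thread differ by less than $m_k$, each thread therefore meets every block. Next I would show that along a thread the local label is constant within a block: while the thread stays in a block it advances to strictly higher floors by the same increments as in $\mathcal{P}(k)$, so the property that $S$ preserves labels along upward moves applies verbatim to the local labels. When a thread leaves block $s$ it does so from the maximal-height floor of its current local label $\ell'$ (local height $r$ with $r+p\ge m_k$) and re-enters block $s+1$ at local height $r+p-m_k<\sup p$, whose local label is $\pi^{(k)}(\ell')$ by the very definition of $\pi^{(k)}$. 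Using the compatibility property quoted above (on block $0$ the global and local labels agree), induction on $s$ then gives: a thread with global label $g$ carries local label $(\pi^{(k)})^{s}(g)$ throughout block $s$.

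Finally I would compute $\pi^{(k+1)}(\ell)$. The maximal-height floor of global label $\ell$ sits in the top block $s=\alpha_{k+1}-1$, where by the previous step its local label is $(\pi^{(k)})^{\alpha_{k+1}-1}(\ell)$ and it is the top floor of that local label. Applying $S$ wraps it around the entire column into block $0$ at local height $<\sup p$, with local label $\pi^{(k)}\big((\pi^{(k)})^{\alpha_{k+1}-1}(\ell)\big)=(\pi^{(k)})^{\alpha_{k+1}}(\ell)$; since this landing floor lies in block $0$, the compatibility property identifies its global label with this local label. By the definition of $\pi^{(k+1)}$ this global label is $\pi^{(k+1)}(\ell)$, yielding $\pi^{(k+1)}=(\pi^{(k)})^{\alpha_{k+1}}$.

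The step I expect to be the main obstacle is the bookkeeping that keeps the global labels $\mathcal{L}_{k+1}$ cleanly separated from the local labels $\mathcal{L}_k$ while they are twisted by successive powers of $\pi^{(k)}$: one must check that each thread genuinely visits every block (so the induction has no gaps), that the within-block dynamics is a faithful copy of the $\mathcal{P}(k)$ dynamics so that $\pi^{(k)}$ transfers at each transition, and that a thread's global label is never altered while its local label is conjugated. Once the block decomposition and the two labeling properties stated before the lemma are in hand, these verifications are routine.
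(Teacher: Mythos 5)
Your proposal is correct and takes essentially the same route as the paper's proof: both realize the single column of $\mathcal{P}(k+1)$ as $\alpha_{k+1}$ stacked copies of the column of $\mathcal{P}(k)$ and follow a label thread upward, picking up one factor of $\pi^{(k)}$ at each passage between consecutive copies and closing the loop using the agreement of $\mathcal{L}_{k+1}$ with $\mathcal{L}_k$ on floors of height less than $m_k$. The paper's version is terser --- it simply tracks the $S$-orbit of a point in the minimal-height floor of label $\ell$ and counts the $\alpha_{k+1}$ traversals of length $l(k)$ needed to exceed $l(k+1)$ --- whereas you spell out the supporting bookkeeping (starts lie in block $0$, maximal floors lie in the top block, local labels are constant within a block) that the paper leaves implicit.
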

\begin{proof}
Suppose $x$ is in a floor of $\mathcal{P}(k+1)$ with label $\ell$ with minimal height. 
Then $x$ is also in a floor of $\mathcal{P}(k)$ with label $\ell$. 
Then if $n_1$ is the minimal integer such that $\sum_{n=0}^{n_1-1} pS^n(x) \geq l(k)$
$x_1=S^{n_1}(x)$ is in a floor with label $\pi^{(k)}(\ell)$. 
If $n_2$ is the minimal integer such that $\sum_{n=0}^{n_2-1} pS^n(x_1) \geq l(k)$
$x_2=S^{n_2}(x_1)$ is in a floor with label $\pi^{(k)}\pi^{(k)}(\ell)$ and so on. 
This process continues until we have $\sum_{i=1}^{I} n_i \geq l(k+1)$ which occurs exactly 
when $I=\alpha_{k+1}$. 
\end{proof}

Given an odometer $(X_{\alpha},T_{\alpha})$ and 
$T \pleadsto S$, a necessary and sufficient condition for $S:X\to X$ to be minimal is given below. 
\begin{lemma} \label{minsuff}
Suppose $(X_{\alpha},T_{\alpha})$ is a minimal odometer system and 
$T \pleadsto S$ where $S:X_{\alpha} \to X_{\alpha}$ is homemorphism. 
The system $(X_{\alpha},S)$ is minimal if and only if for all sufficiently large $k$, 
$\pi^{(k)}$ is cyclic permutation on $\{0,1,\ldots ,c-1\}$ where 
$c$ is the orbit number for $S$. 
\end{lemma}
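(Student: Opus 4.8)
The plan is to connect minimality of $(X_\alpha, S)$ to the dynamics of the permutations $\pi^{(k)}$ on the label set $\{0,1,\ldots,c-1\}$, using the multiplicative relation $\pi^{(k+1)} = (\pi^{(k)})^{\alpha_{k+1}}$ established in Lemma~\ref{permpower}. The key observation is that the labeling $\mathcal{L}_k$ partitions the $m_k$ floors of $\mathcal{P}(k)$ into $c$ classes (one per label), and each such label-class is precisely the intersection of the single $T$-column with one of the $S$-orbits. Thus the floors sharing label $\ell$ are exactly the floors swept out by one $S$-orbit as it passes through the column of $\mathcal{P}(k)$. A point of $X_\alpha$ lies in a fixed $S$-orbit if and only if its $\mathcal{P}(k)$-floor carries the corresponding label for every large $k$, so minimality of $S$ reduces to understanding when the labeling is consistent across levels.

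\textbf{Key steps.} First I would make precise that $S$ permutes the floors of $\mathcal{P}(k)$ cyclically within each label-class: by the labeling construction, if $F$ has label $\ell$ and is not the maximal-height floor with that label, then $SF$ is the next higher floor with label $\ell$; and if $F$ is the maximal-height floor with label $\ell$, then by definition $SF$ lies in the floor with label $\pi^{(k)}(\ell)$. Consequently, one full pass of $S$ through the $c$ label-classes returns to the base of $\mathcal{P}(k)$, and the way the classes are visited is governed exactly by $\pi^{(k)}$. Second, I would argue that $(X_\alpha, S)$ is minimal if and only if the partition into label-classes cannot be refined into a nontrivial clopen $S$-invariant structure. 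Since $X_\alpha = \bigcap_k (\text{floors of } \mathcal{P}(k))$ and the $\mathcal{P}(k)$ separate points, a clopen $S$-invariant proper subset would, for large $k$, be a union of some but not all label-classes that is closed under the action of $\pi^{(k)}$ on labels. Hence $S$ is minimal exactly when $\pi^{(k)}$ acts with a single orbit on $\{0,1,\ldots,c-1\}$ for all large $k$, i.e. $\pi^{(k)}$ is a $c$-cycle.

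\textbf{Consistency across levels.} The direction requiring care is showing the condition must hold for \emph{all} sufficiently large $k$ rather than merely infinitely many. Here Lemma~\ref{permpower} does the work: if $\pi^{(k)}$ is a $c$-cycle then $\pi^{(k+1)} = (\pi^{(k)})^{\alpha_{k+1}}$ is a $c$-cycle if and only if $\gcd(\alpha_{k+1}, c) = 1$, so the cyclicity can fail at a later level even if it holds earlier; conversely a non-cyclic $\pi^{(k)}$ yields a non-cyclic power in certain cases. I would therefore phrase the equivalence as: $S$ is minimal iff $\pi^{(k)}$ is cyclic for all large $k$, and show that non-cyclicity of some $\pi^{(k)}$ (for arbitrarily large $k$) produces a proper closed $S$-invariant set, namely the closure of the union of floors whose labels lie in a proper $\pi^{(k)}$-invariant subset, which is a genuine clopen set separating an $S$-orbit closure from the rest of $X_\alpha$.

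\textbf{Main obstacle.} The principal difficulty I anticipate is the forward direction — extracting a concrete proper closed $S$-invariant set from the failure of cyclicity — because one must verify that a $\pi^{(k)}$-invariant label subset at level $k$ lifts to a genuinely $S$-invariant clopen set in $X_\alpha$, and that this invariance is preserved under the refinement relation between $\mathcal{P}(k)$ and $\mathcal{P}(k+1)$. The compatibility statement preceding the proposition (that a floor of $\mathcal{P}(k+1)$ with label $\ell$ and height less than $m_k$ sits inside a floor of $\mathcal{P}(k)$ with the same label) together with Lemma~\ref{permpower} should guarantee this lift is well-defined and stable, but making the invariance rigorous across all levels simultaneously is the delicate bookkeeping at the heart of the argument.
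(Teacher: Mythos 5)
There is a genuine gap, and it sits in the direction you treat as immediate. You reduce minimality of $(X_{\alpha},S)$ to the nonexistence of a proper clopen $S$-invariant set (``minimal if and only if the partition into label-classes cannot be refined into a nontrivial clopen $S$-invariant structure''). That equivalence is false for general homeomorphisms: minimality is the absence of proper nonempty \emph{closed} invariant sets, and the absence of clopen invariant sets is strictly weaker --- any topologically transitive non-minimal system (a full shift, say) has no proper clopen invariant subset. So your chain ``all $\pi^{(k)}$ cyclic $\Rightarrow$ no clopen invariant set $\Rightarrow$ minimal'' breaks at the last arrow. The repair is available inside your own setup: when $\pi^{(k)}$ is cyclic, $S$ does not merely respect the label classes, it cyclically permutes \emph{all} $m_k$ floors of $\mathcal{P}(k)$. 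Since $p$ is constant on floors, $S$ induces a permutation of the floors; within a class it moves strictly upward, and non-top floors map within their class, so the minimal floor of each class has no preimage except a class top --- injectivity then forces the top of class $\ell$ to map onto the minimal floor of class $\pi^{(k)}(\ell)$. Hence the induced floor permutation is a single $m_k$-cycle exactly when $\pi^{(k)}$ is a $c$-cycle, so every $S$-orbit meets every floor of $\mathcal{P}(k)$ within $m_k$ steps; since the $\mathcal{P}(k)$ separate points, all orbits are dense. Equivalently, once $S$ cyclically permutes each $\mathcal{P}(k)$, Theorem \ref{BKTHM} shows $(X_{\alpha},S)$ is itself conjugate to an odometer, hence minimal --- this is precisely the mechanism the paper exploits in Theorem \ref{sameodom} and in the theorem following Lemma \ref{minsuff} (the paper never writes out a proof of the lemma itself).

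Your other direction is essentially correct and coincides with the only argument the paper records (a sketch embedded in the proof of the subsequent theorem): a non-cyclic $\pi^{(k)}$ contains a cycle $C$ of length less than $c$, and the union of floors with labels in $C$ is a proper clopen $S$-invariant set, so $S$ is not minimal. But two of your framing claims should be corrected. First, the cross-level ``lifting'' you flag as the main obstacle is unnecessary: a single bad level $k$ suffices (the union of floors is already clopen, and invariance is checked at level $k$ alone), and by Lemma \ref{permpower} the cycles of $\pi^{(k+1)}=(\pi^{(k)})^{\alpha_{k+1}}$ refine those of $\pi^{(k)}$, so non-cyclicity automatically propagates to all later levels, reconciling ``some $k$'' with ``all sufficiently large $k$.'' Second, your stated ``key observation'' is literally false: a label class is not the intersection of the column with a single $S$-orbit, and a point's $S$-orbit is not determined by its floor labels ($S$ changes the label at every wrap around the column, and when $S$ is minimal every orbit enters floors of every label). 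The correct $S$-invariant datum is the $\pi^{(k)}$-\emph{cycle} containing the label, which is what your invariant-set construction actually uses. In short: the direction you call delicate is the easy one, and the direction you pass over in a sentence is where the argument needs the floor-cycling (or Theorem \ref{BKTHM}) input.
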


This leads to the following two theorems which discuss which odometers admit minimal bounded speedups (with 
a given orbit number). 

\begin{theorem}
If $(X_{\alpha},T_{\alpha})$ is an odometer system with $\alpha = \langle \alpha_1,\alpha_2,\ldots \rangle$ and 
$c \geq 1$ is an integer such that for some $N\in \Z^+$, $\gcd(c,\alpha_i) =1$ 
for all $i\geq N$, then $(X_{\alpha},T_{\alpha})$ has a bounded speedup $S$ with 
orbit number $c$ and $S:X_{\alpha} \to X_{\alpha}$ minimal. 
\end{theorem}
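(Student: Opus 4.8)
The plan is to build the speedup explicitly on a single Kakutani--Rokhlin tower and then let the coprimality hypothesis do the rest through Lemmas \ref{permpower} and \ref{minsuff}. First I would fix $k_0 \ge N-1$ large enough that $m := m_{k_0}$ exceeds $2c$, and define the jump function $p$ to be constant on the floors of the single column $\mathcal{P}(k_0)$. Identifying the floors with $\{0,1,\dots,m-1\}$ (so that $T$ acts as $i \mapsto i+1 \bmod m$ and $S$ acts as $\sigma\colon i \mapsto i+p(i) \bmod m$), I would set $p(i)=c$ for every floor $i \le m-c-1$, so that these floors map by $i \mapsto i+c$ with no wrap. This forces the labeling $\mathcal{L}_{k_0}$ to consist of exactly the $c$ residue classes modulo $c$, each a single increasing $S$-run, so that the orbit number is $c$ as required.

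The only remaining freedom is the choice of jumps on the top $c$ floors $m-c,\dots,m-1$, and this is where the construction is decided. For $0 \le j < c$ the floor $m-c+j$ is the maximal-height floor in its label class, so $S$ wraps it to floor $(m-c+j)+p(m-c+j)-m$, and this image determines $\pi^{(k_0)}$. Writing $t = m \bmod c$ and $b_j = p(m-c+j)$, a short computation gives $\pi^{(k_0)}\big((t+j)\bmod c\big) = (j+b_j)\bmod c$ and identifies the image floor as $j+b_j-c$. I would then take $b_j$ to be the unique integer in $[\,c-j,\;2c-j\,)$ congruent to $t+1$ modulo $c$. This one prescription accomplishes two things simultaneously: it forces each image floor $j+b_j-c$ into $\{0,\dots,c-1\}$ and makes these run over all of those values as $j$ varies, so that $\sigma$ is a bijection of $\{0,\dots,m-1\}$ (hence $S$ is a homeomorphism and a genuine bounded speedup, as $p$ then takes values in $\{1,\dots,2c-1\}$); and it makes $\pi^{(k_0)}$ equal to the cyclic permutation $\ell \mapsto \ell+1 \bmod c$.

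Finally, since each $\alpha_i$ with $i > k_0 \ge N-1$ is coprime to $c$, Lemma \ref{permpower} gives $\pi^{(k)} = \big(\pi^{(k_0)}\big)^{\alpha_{k_0+1}\cdots\alpha_k}$ with exponent coprime to $c$; as a power of a $c$-cycle by an exponent coprime to $c$ is again a $c$-cycle, $\pi^{(k)}$ is cyclic for every $k \ge k_0$, and Lemma \ref{minsuff} then yields that $(X_\alpha,S)$ is minimal. I expect the main obstacle to be the bookkeeping on the top $c$ floors: one must check that the single residue-and-range prescription for the $b_j$ reconciles bijectivity of $\sigma$ with cyclicity of $\pi^{(k_0)}$, while keeping $p$ strictly positive and bounded. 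The propagation step is then essentially free, resting only on the elementary fact about powers of cycles and on the coprimality hypothesis, which is exactly the property that fails for the bare power $T^c$ when some $\alpha_i$ with $i<N$ shares a factor with $c$.
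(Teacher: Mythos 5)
Your proposal is correct, and it follows the paper's overall strategy---make $p$ constantly equal to $c$ on all but the top few floors of a single tall tower, rig the wrap-around jumps so that $\pi^{(k_0)}$ is a $c$-cycle, propagate cyclicity via Lemma~\ref{permpower} using $\gcd(c,\alpha_i)=1$ for $i\geq N$, and conclude minimality from Lemma~\ref{minsuff}---but your construction genuinely differs in execution. The paper first invokes Lemma~\ref{Smeasure} to replace $T_\alpha$ by a conjugate odometer $T_\beta$ with $\beta=\langle g,\tfrac{m}{g}\alpha_{N+1},\alpha_{N+2},\ldots\rangle$, $g=\gcd(c,m)$, and then writes down a jump function with values in $\{c-g+1,c,c+1\}$ on the second tower of that presentation, leaving the cyclicity of $\pi^{(2)}$ as ``one can check.'' You skip the conjugation entirely: your single congruence-and-range prescription $b_j\in[c-j,2c-j)$ with $b_j\equiv t+1 \pmod{c}$ (where $t=m\bmod c$) forces the images of the top $c$ floors to be exactly $\{0,\ldots,c-1\}$, each hit once---giving bijectivity of $\sigma$, hence that $S$ is a homeomorphism---while simultaneously yielding $\pi^{(k_0)}(\ell)=\ell+1\bmod c$ by a one-line computation, so both facts are verified rather than asserted; the price is the slightly larger jump bound $p\leq 2c-1$ versus the paper's $c+1$. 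I checked the arithmetic: floor $m-c+j$ carries label $(t+j)\bmod c$ and is the maximal floor in its class, its image floor is $j+b_j-c=(t+j+1)\bmod c$, and as a consistency check $\sum_i p(i)=(m-c)c+\sum_j b_j=(m-c)c+\tfrac{c(c+1)}{2}+\tfrac{c(c-1)}{2}=cm$, as the orbit number $c$ requires. The one point you state rather than argue is that the $c$ residue-class labels at level $k_0$ really do give orbit number $c$; but since $\pi^{(k)}$ remains a permutation of $\{0,\ldots,c-1\}$ at every level and the level-$(k+1)$ labeling extends the level-$k$ one, this is immediate, and the paper's own proof is no more explicit on this point.
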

	\begin{proof}
Set $m = \alpha_1\alpha_2 \cdots \alpha_N$ and $g = \gcd(c,m)$. 
Without loss of generality we may assume $m>c$.
By Theorem \ref{Smeasure}, 
$T_{\alpha}$ is topologically conjugate to the odometer
$T_{\beta}$
with $\beta=\langle g, \frac{m}{g}\alpha_{N+1}, \alpha_{N+2}, \alpha_{N+3}, \ldots \rangle$.
It suffices to show that 
$(X,T) = (X_{\beta},T_{\beta})$ has a bounded speedup $S$ with 
orbit number $c$.

Let $\{\mathcal{P}(k)\}$ be the sequence of partitions associated to $(X,T)$.
We will define $p(x)$ to be constant on elements 
$\{T^iA(2) : 0 \leq i < M \}$ of $\mathcal{P}(2)$ where $M = m \alpha_{N+1}$. 
Set 
$$p(x) = \begin{cases} 
c & {\rm for }\ x \in T^iA(2), \ 0 \leq i < M-c, M-c+g \leq i < M \\
c+1 & {\rm for }\ x \in T^iA(2), \ M-c \leq i < M-c+g-1 \\
c-g+1 & {\rm for }\ x \in T^iA(2), \  i = M-c+g-1
\end{cases}$$  
One can check that the permutation $\pi^{(2)}$ gives a cyclic permutation 
of $\{0,1,\ldots,c-1\}$. For $k>2$, Lemma \ref{permpower} and the fact
that $\gcd(c,\alpha_{N+k-1})=1$ imply that $\pi^{(2)}$ also gives a cyclic permutation 
of $\{0,1,\ldots,c-1\}$, which completes the proof in one direction.

For the other, suppose that $\pi^{(k)}$ is a permutation of 
$\{0,1,\ldots, c-1\}$ which is not cyclic. Then it contains a cycle of 
order $< c$. But then for some $x$, the $S$-orbit of $x$ only intersects
sets in $\mathcal{P}(k)$ with $\mathcal{L}_k$-labels in that cycle. 
Therefore, $S$ is not minimal.  
\end{proof}

\begin{theorem}\label{nospeedup}
		If $(X_{\alpha},T_{\alpha})$ is an odometer system with 
		$\alpha = \langle \alpha_1,\alpha_2,\ldots \rangle$ and every prime $p$ divides infinitely many 
		$\alpha_i$, then there is no minimal $S: X_{\alpha} \to X_{\alpha}$ with $T \pleadsto S$
		other than $p\equiv 1$.
	\end{theorem}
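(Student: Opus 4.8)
The plan is to prove the contrapositive structure directly: assuming every prime divides infinitely many $\alpha_i$, I would show that any jump function $p$ producing a minimal speedup $S$ must satisfy $c=1$, and then conclude via Lemma~\ref{minsuff} and the structure of the permutations $\pi^{(k)}$ that $p\equiv 1$. The central tool is Lemma~\ref{permpower}, which tells us that $\pi^{(k+1)}=(\pi^{(k)})^{\alpha_{k+1}}$. For $k$ large enough, $p$ is constant on the floors of $\mathcal{P}(k)$, and each $\pi^{(k)}$ is a permutation of $\{0,1,\ldots,c-1\}$. By Lemma~\ref{minsuff}, minimality of $S$ forces $\pi^{(k)}$ to be a $c$-cycle for all sufficiently large $k$.

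First I would analyze the order of $\pi^{(k)}$ in the symmetric group on $c$ symbols. Since a $c$-cycle has order exactly $c$, minimality requires each $\pi^{(k)}$ (for large $k$) to have order $c$. But Lemma~\ref{permpower} gives $\pi^{(k+1)}=(\pi^{(k)})^{\alpha_{k+1}}$, so the order of $\pi^{(k+1)}$ is $c/\gcd(c,\alpha_{k+1})$ whenever $\pi^{(k)}$ is a $c$-cycle. For $\pi^{(k+1)}$ to again be a $c$-cycle (in particular to have order $c$), we need $\gcd(c,\alpha_{k+1})=1$. Thus minimality forces $\gcd(c,\alpha_i)=1$ for all sufficiently large $i$. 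This is the crux: the hypothesis that every prime divides infinitely many $\alpha_i$ is incompatible with $\gcd(c,\alpha_i)=1$ eventually, unless $c$ has no prime divisors at all, i.e. $c=1$.

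The key step, which I expect to be the main obstacle, is making the divisibility argument watertight: I must verify that raising a $c$-cycle to the power $\alpha_{k+1}$ yields a $c$-cycle \emph{only} when $\gcd(c,\alpha_{k+1})=1$, and translate the ``fails to be a $c$-cycle'' conclusion back into non-minimality via the final paragraph of the proof of the preceding theorem (an $S$-orbit trapped in a proper sub-cycle of labels). Concretely, suppose $c>1$ and let $q$ be any prime dividing $c$. By hypothesis $q$ divides $\alpha_i$ for infinitely many $i$, so for arbitrarily large $k$ we have $\gcd(c,\alpha_{k+1})\ge q>1$, whence $(\pi^{(k)})^{\alpha_{k+1}}$ has order strictly less than $c$ and decomposes into $\gcd(c,\alpha_{k+1})$ disjoint cycles of equal length---so $\pi^{(k+1)}$ is not cyclic.

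Finally I would assemble these pieces. Since $q\mid\alpha_i$ for infinitely many $i$, there is no $N$ beyond which $\pi^{(k)}$ is consistently a $c$-cycle: infinitely often the exponent $\alpha_{k+1}$ shares the factor $q$ with $c$ and destroys cyclicity. By Lemma~\ref{minsuff} this contradicts the minimality of $S$. Hence $c=1$. With $c=1$, the orbit number is $1$, so by Lemma~\ref{Nic Lemma} $p(x)=1+f(x)-fT(x)$ for some $f\in C(X,\mathbb{Z})$; I would then argue that $p\equiv 1$. This last reduction uses that on an odometer the only continuous integer-valued coboundaries realizing a positive-integer-valued $p$ with $c=1$ force $p$ identically $1$---equivalently, with orbit number $c=1$ the single $S$-orbit must coincide with the single $T$-orbit traversed in the same order, and the labeling argument shows $S$ can only advance one $T$-step at a time, giving $p\equiv1$.
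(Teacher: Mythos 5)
Your proposal follows essentially the same route as the paper's proof: apply Lemma~\ref{permpower} to write $\pi^{(n)}=(\pi^{(n-1)})^{\alpha_n}$, observe that $\gcd(c,\alpha_n)>1$ (which the hypothesis guarantees infinitely often whenever $c>1$) prevents this power from being a $c$-cycle, and conclude non-minimality via Lemma~\ref{minsuff}. You actually supply more detail than the paper does---the order computation for powers of a $c$-cycle and the explicit reduction from $c=1$ to $p\equiv 1$---and both additions are correct.
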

	\begin{proof} 
	Consider one of the permutations $\pi^{(k)}$ associated with the speedup
	$S:x \mapsto T^{p(x)}(x)$. If $S$ is a speedup with orbit number $c$ then 
	$\gcd(c,\alpha_n) > 1$ for some $n>k$. By Lemma \ref{permpower}, the permutation 
	$\pi^{(n)}$ cannot be a cyclic permutation of $\{0,1,\ldots, c-1\}$ 
	as it is equal to $(\pi^{(n-1)})^{\alpha_n}$ and $\gcd(c,\alpha_n) > 1$.
	\end{proof}
	
	\begin{remark} We note that it follows from Lemma~\ref{Smeasure} and Theorem~\ref{nospeedup} that the only odometer (up to topological conjugacy) that does not have a nontrivial minimal bounded speedup 
	is the odometer $(X_\alpha, T_\alpha)$ with $\alpha = \langle 2, 3, 4, 5, \ldots \rangle$.
	\end{remark}
	We now conclude our investigation of speedups of odometers by providing the following characterization for when a bounded function $p$ is a valid jump function to define a bounded minimal speedup for a given odometer $(X,T)$.
	
	\begin{theorem}
		Let $(X,T)$ be an odometer with $\alpha = \langle \alpha_1,\alpha_2,\alpha_3,\ldots\rangle$.  Let $m_i = \alpha_1\alpha_2\cdots\alpha_i$ for all $i\in \Z^+$ and suppose $\{\mathcal{P}(k)\}$ is a nested sequence of partitions of $X$ labeled as in Theorem \ref{BKTHM}.  Then 
		$T \pleadsto S$ where $S:X \to X$ is minimal 
		if and only if there exists an $I\in \Z^+$ such that the following hold.
		\begin{enumerate}[(1)]
			\item For each $j=0,1,\ldots, m_I-1$, there exists $q_j\in\Z^+$ such that $p(x) = q_j$ for all $x\in T^jA(k) \in 
			\mathcal{P}(k)$.
			\item The elements of $\mathcal{P}(I)$ are cyclically permuted under $S:x \mapsto T^{p(x)}(x)$.
			\item $\displaystyle \sum_{j=0}^{m_I - 1} q_j = c\cdot m_I$ where $(c,\alpha_k) = 1$ for all $k>I$.
		\end{enumerate}
	\end{theorem}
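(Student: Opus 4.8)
The plan is to prove both implications using the permutation machinery ($\pi^{(k)}$ and the labelings $\mathcal{L}_k$) together with Lemma~\ref{minsuff} and Lemma~\ref{permpower}. Throughout I fix $I$ large enough that two things hold simultaneously: $p$ is constant on each floor of $\mathcal{P}(I)$ (possible because $p$ is continuous and the $\mathcal{P}(k)$ separate points, so by compactness $p$ is eventually constant on floors), and $m_I > \sup_x p(x)$ (possible since $m_k \to \infty$). For such $I$, once $p$ is constant on floors, $S$ sends each floor $T^jA(I)$ to the floor $T^{(j+q_j)\bmod m_I}A(I)$ via the homeomorphism $T^{q_j}$, so $S$ induces a permutation $\sigma_I$ of the $m_I$ floors of $\mathcal{P}(I)$, and likewise at every finer level $k \ge I$.

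For the forward direction (minimality $\Rightarrow$ (1)--(3)): condition (1) is precisely the statement that $p$ is constant on floors of $\mathcal{P}(I)$, which holds by the choice of $I$. For (2), the floors of $\mathcal{P}(I)$ are nonempty clopen sets, so minimality forces every $S$-orbit to be dense and hence to meet every floor; since the floor-visits of a point are governed by the $\sigma_I$-orbit of its floor, $\sigma_I$ must be a single $m_I$-cycle, i.e. the floors are cyclically permuted. For (3), apply $S^{m_I}$ to a point of $A(I)$: going once around the single $m_I$-cycle returns $A(I)$ to itself with total $T$-displacement $\sum_{j=0}^{m_I-1} q_j$, so from $S^{m_I}A(I) = A(I) = T^{\sum q_j}A(I)$ at level $I$ we get $m_I \mid \sum q_j$, say $\sum q_j = c\, m_I$; exactly as in the proof of Theorem~\ref{sameodom}, this $c$ is the orbit number. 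Finally, condition (2) is equivalent to $\pi^{(I)}$ being a cyclic permutation of $\{0,1,\ldots,c-1\}$, so Lemma~\ref{minsuff} gives that $\pi^{(k)}$ is cyclic for all large $k$; by Lemma~\ref{permpower}, $\pi^{(k)} = (\pi^{(I)})^{\alpha_{I+1}\cdots\alpha_k}$, and a power of a $c$-cycle is again a $c$-cycle precisely when the exponent is coprime to $c$. Hence $\gcd(c,\alpha_k) = 1$ for every $k > I$, which is (3).

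For the reverse direction ((1)--(3) $\Rightarrow$ minimality): first I would check that $S$ is a legitimate bounded speedup. By (1), $p$ is locally constant, hence continuous and bounded; by (2), the floor map $\sigma_I$ is a bijection, the images $T^{(j+q_j)\bmod m_I}A(I)$ are disjoint and cover $X$, and $S$ restricts to the injection $T^{q_j}$ on each floor, so $S$ is an injective continuous self-map of the compact space $X$, therefore a homeomorphism with $T \pleadsto S$. Its orbit number is the $c$ of condition (3): running $S^{m_I}$ once around the single $m_I$-cycle shows $S^{m_I}(x) = T^{c m_I}(x)$ on $A(I)$, and the argument of Theorem~\ref{sameodom} identifies this $c$ with the orbit number. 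Now (2) makes $\pi^{(I)}$ a cyclic permutation of $\{0,1,\ldots,c-1\}$, while (3) gives $\gcd(c,\alpha_k) = 1$ for all $k > I$; Lemma~\ref{permpower} then yields that each $\pi^{(k)} = (\pi^{(I)})^{\alpha_{I+1}\cdots\alpha_k}$ is again cyclic, so Lemma~\ref{minsuff} concludes that $S$ is minimal.

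The step needing the most care, and the place where both directions are pinned down, is the equivalence between condition (2) (the $m_I$ floors of $\mathcal{P}(I)$ are cyclically permuted by $S$) and the statement that $\pi^{(I)}$ is a cyclic permutation of the $c$ labels. This requires unwinding the definition of $\mathcal{L}_I$: within a fixed label class $S$ advances floor-by-floor up the column, and only upon reaching the top floor of that class does it pass to another class according to $\pi^{(I)}$; thus the $\sigma_I$-orbit of a floor exhausts its own label class, then the class $\pi^{(I)}(\ell)$, and so on. One must verify that $\sigma_I$ is transitive on all $m_I$ floors exactly when $\pi^{(I)}$ is transitive on the $c$ labels, and that the number of labels $c$ coincides with the $c$ determined by $\sum q_j = c\, m_I$. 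Once this bookkeeping is settled, both implications reduce to the purely combinatorial fact that $(\pi^{(I)})^t$ is a single $c$-cycle iff $\gcd(t,c) = 1$, combined with Lemmas~\ref{permpower} and~\ref{minsuff}.
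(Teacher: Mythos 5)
Your proposal is correct, and the reverse implication is essentially the paper's: both verify that (1) and (2) make $S$ a bijective, hence homeomorphic, bounded speedup and then get minimality from (3) by citing Lemmas~\ref{permpower} and~\ref{minsuff}. Where you genuinely diverge is the forward proof of condition (3). The paper never touches the label permutations there: it defines $c$ by $c\cdot m_I = \sum_j q_j$, assumes $\gcd(c,\alpha_{I+1})=g>1$, and with $N = \alpha_{I+1}/g$ computes directly that $S^{N m_I}A(I+1) = T^{Ncm_I}A(I+1) = A(I+1)$, using that floors of $\mathcal{P}(I+1)$ sit inside floors of $\mathcal{P}(I)$; since $N m_I < m_{I+1}$, the $S$-cycle through $A(I+1)$ misses some floors of $\mathcal{P}(I+1)$, contradicting minimality. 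You instead route through Lemma~\ref{minsuff} ($\pi^{(k)}$ cyclic for large $k$), Lemma~\ref{permpower} ($\pi^{(k)} = (\pi^{(I)})^{\alpha_{I+1}\cdots\alpha_k}$), and the fact that a power of a $c$-cycle is a $c$-cycle iff the exponent is coprime to $c$ --- the same mechanism the paper uses only for Theorem~\ref{nospeedup}. Your route is more uniform (the same machinery drives both implications), but it forces you to identify the number of labels (the orbit number of Lemma~\ref{minsuff}) with the constant $c = \sum_j q_j/m_I$ of condition (3), a bookkeeping step you rightly flag as the delicate point; it does go through, by observing that the $S$-journey from $A(I)$ back to itself takes exactly $m_I$ steps when $\sigma_I$ is an $m_I$-cycle while accumulating total $T$-displacement $\sum_j q_j$, i.e.\ $c$ passes through the column, one label class per pass. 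The paper's direct computation at level $I+1$ buys complete independence from that identification in the forward direction --- no labels, no orbit number, just the nested tower --- at the cost of a less transparent link to the permutation picture; note, though, that the paper's own converse (and yours) still needs the identification implicitly when invoking Lemma~\ref{minsuff}, so your version at least makes explicit a dependence the paper leaves silent.
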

	\begin{proof}
		First suppose $T \pleadsto S$ where $S:X \to X$ is minimal. 
		Then $p:X\to \Z^+$ is a continuous function and we may choose $I\in \Z^+$ such that (1) holds.  As $S$ is a minimal homeomorphism, (2) must also hold.  Further, because $S$ and $T$ cyclically permute the elements of $\mathcal{P}(I)$, we have $$S^{m_I}(T^jA(k)) = 
		T^{q_0+q_1+\cdots+q_{m_I-1}}(T^jA(k)) = T^jA(k) \text{ for all } j=0,1,\ldots,m_I-1$$ and 
		$$\displaystyle m \text{ divides } \sum_{k=0}^{m_I-1}q_k.$$ 
		Set $c\in Z^+$ such that $c\cdot m_I = \displaystyle\sum_{j=0}^{m_I-1} q_j$ and suppose that $(c,\alpha_k) \neq 1$ for some $k>I$. Without loss of generality we may assume $k=I+1$ and set $\displaystyle N = \frac{\alpha_{k}}{(c,\alpha_k)}$. Note that $$S^{m_I}(T^jA(k)) = T^{c\cdot m_I}(T^jA(k)) = T^jA(k) \text{ for all } j=0,1,\ldots, m_I-1.$$ Because $T^l A(I+1) \subset T^j A(I)$ whenever $l\equiv j\mod m_I$, $$S^{m_I}(T^l A(I+1)) = 
		T^{c\cdot m_I}(T^l A(I+1)) \text{ for all } l=0,1,\ldots m_{I+1}-1.$$  Then $$S^{N\cdot m_I}A(I+1) = T^{N\cdot c\cdot m_I} A(I+1) = A(I+1).$$  As $N\cdot m_I < m_{I+1}$, $S$ is not minimal, a contradiction.  Hence (3) must hold.
		
		Conversely, suppose that there exists an $I\in Z^+$ such that (1) - (3) hold.  We show that $S:x \mapsto T^{p(x)}(x)$ is a minimal bijection (and thus a minimal homeomorphism) on $X$.  First, suppose there exist $y,y'\in X$ such that $S(y) = S(y')=x$.  By (2) $y$ and $y'$ must lie in the same element $T^jA(I)$
		of $\mathcal{P}(I)$. By (1), $S(y) = T^{q_j}(y) = x = T^{q_j}(y') = S(y')$. As $T$ is one-to-one, it follows that $y=y'$. Further, given $x\in X$, there exists some $j$ such that 
		$x\in T^jA(I)$ and again by (2) there exists $T^l A(I)$ such that $S(T^l A(I))=T^jA(I)$ and $p(y) = q_l$ for all $y\in T^lA(I)$. As $T$ is onto, there exists a point $y\in T^jA(I)$ such that 
		$T^{q_l}(y)=S(y) = x$, and hence $S$ is an onto function. 
		The map $S$ is minimal by Lemmas \ref{permpower} and \ref{minsuff}. 
	\end{proof}

\section{Substitution Subshifts} \label{sub}
\subsection{Subshifts} \label{subshift}
Let $\mathcal{A}$ denote a finite set which we will refer to as an 
\emph{alphabet}, the elements of which we will refer to as \emph{symbols}. 
Let $\mathcal{A}^*$ denote the set of finite concatenations of symbols in $\mathcal{A}$
which we will refer to as \emph{words}. For a word $w=w_1w_2\cdots w_n$, we let $|w|=n$ denote the length of $w$. 

The set $\mathcal{A}^{\mathbb{Z}}$ is a Cantor space 
with the product of the discrete topology. We will consider the shift map
$T: \mathcal{A}^{\mathbb{Z}} \to \mathcal{A}^{\mathbb{Z}}$ given by 
$T(x)_k= x_{k+1}$. A \emph{subshift} is any closed, shift-invariant 
subset $X$ of such a space
along with the shift map $T$ restricted to $X$. 
It is a well-known theorem that $(X,T)$ is an \emph{expansive}
homeomorphism of a Cantor set if and only if 
$(X,T)$ is conjugate to a subshift, e.g. see \cite{LindMarcus}.

\begin{definition}
	A homeomorphism of a compact metric space $T:(X,d) \to (X,d)$ is 
	\emph{expansive} if there is a $\delta>0$ such that for every $x \neq y \in X$ there is an 
	$n \in \mathbb{Z}$ such that $d(T^nx,T^ny)>\delta$. 
\end{definition}

For $x \in \mathcal{A}^{\mathbb{Z}}$ and $i<j$, we will use the following notations:
$x[i,j]=x_ix_{i+1}\cdots x_j$, $x[i,j)=x_ix_{i+1}\cdots x_{j-1}$ and $x[i]=x_i$. If $w$ is a word in $\mathcal{A}^*$ we will use the same notation to denote subwords of 
$w=w_1w_2\cdots w_n$, e.g. 
$w[i,j] = w_iw_{i+1}\cdots w_j$ if $1\leq i<j\leq |w|$. 
The \emph{language} of a subshift $X$ is the set
of all words $\{x[i,j) :x \in X, i<j\}$. 
A subshift $(X,T)$ is minimal if and only if for every word $w$ in the language of $X$ there
is an $r>0$ such that for every $x \in X$ and every $i \in \mathbb{Z}$, 
$w$ is a subword of $x[i,i+r]$. 

\begin{example} \label{exammeas}
We interject here an example of a bounded speedup $T \pleadsto S$ of a subshift $(X,T)$ 
with orbit number $2$ such that $p(x)-2$ has a non-zero integral for some $S$-invariant Borel 
probability measure. This will follow from the existence of a point
$x_0 \in X$ such that 
$$\limsup_{N \to \infty} 
\frac{1}{N} \sum_{j=0}^{N-1} p(S^jx_0) > 0.$$

First define the space $X \subset \{0,1\}^{\mathbb{Z}}$ upon which
the shift map $T$ acts. 
The increasing sequence of integers $6 < n_2 < n_3 < \cdots $ 
will be recursively defined later.
Set 
$$w_1(0) = 0 0 0 0 0 1 \quad \text{and} \quad w_1(1) = 0 0 0 0 0 1 1$$
and for $k \geq 2$, 
$$w_{k}(0) = w_{k-1}(0)^{n_k+1} w_{k-1}(1)w_{k-1}(0)w_{k-1}(1)$$
and 
$$w_{k}(1) = w_{k-1}(0)^{n_k+1} w_{k-1}(1)w_{k-1}(0)w_{k-1}(1)^2.$$
We define $X$ by saying that the language of $X$ is the set of all words that are 
subwords of $w_k(0)$ or $w_k(1)$ for some $k \geq 1$. One can check that the
system $(X,T)$ is minimal. 

Set $A = \{ x : x[0,6) = 000001 \} \subset X$ and 
define a jump function $p:X \to \mathbb{Z}^+$ as follows. 
Let$$p(x)=\begin{cases}
4 & \text{ if $x\in A$}\\
1 & \text{ if $x \in T^j A$ for $j =1,2$}\\
2 & \text{ otherwise.}
\end{cases}$$

Consider $x_0 \in X$ with $x_0[0,|w_k(0)|) = w_k(0)$ for all 
$k \geq 1$ and let 
$s_k = \min \{N : \sum_{j=0}^{N-1} p(S^jx_0) \geq |w_k(0)|\}$. 

We see 
\begin{align*}
\sum_{j=0}^{s_{1}-1} p(S^jx_0) & = 6 \\
\sum_{j=0}^{s_{k}-1} p(S^jx_0) & = n_k \sum_{j=0}^{s_{k-1}-1} p(S^jx_0) + 2|w_{k-1}(0)|+2|w_{k-1}(1)|.
\end{align*}

In order to complete the example, we need the following recursive formula as well. 
\begin{align*}
s_1 & = 2 \\
s_{k} & = n_k s_{k-1} + |w_{k-1}(0)| + |w_{k-1}(1)|
\end{align*}
The recursion formulae show that by choosing $n_k$ sufficiently large, 
we can make 
$\displaystyle \frac{1}{s_k}  \sum_{j=0}^{s_{k}-1} p(S^jx_0)$ as close to  
$\displaystyle \frac{1}{s_{k-1}}  \sum_{j=0}^{s_{k-1}-1} p(S^jx_0)$ as we like. 

We see $\displaystyle \frac{1}{s_1}  \sum_{j=0}^{s_{1}-1} p(S^jx_0) = 3$. 
Therefore, we may recursively choose $n_2 < n_3 < \cdots$ so that 
$\displaystyle \frac{1}{s_{k}}  \sum_{j=0}^{s_{k}-1} p(S^jx_0) >2$ for all $k \geq 1$. 

\end{example}

If $(X,T)$ is a subshift with alphabet $\mathcal{A}$, 
the \emph{$m$-block presentation of $(X,T)$}
is the shift map acting on the space $X^{[m]}$ where the alphabet is $\mathcal{A}^m$
and a sequence $w = (w_i)_{i \in \mathbb{Z}}$ is in $X^{[m]}$ if and only if 
\begin{enumerate}
	\item $w_i[1,m)=w_{i+1}[0,m-1)$ for all $i$,
	\item the sequence $(w_i[0])_{i \in \mathbb{Z})}$ is in $X$. 
\end{enumerate}
For any $m \geq 1$, the $m$-block presentation of a subshift is conjugate to the 
subshift itself. 
Given a function $f \in C(X,\mathbb{Z})$, we will use 
higher block presentation to assume without loss of generality that 
$f(x)$ depends only on the symbol $x[0]$. 

\begin{lemma} \label{lem:exp}
	A bounded speedup of an expansive map is expansive. 
\end{lemma}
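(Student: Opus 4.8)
The plan is to reduce to the subshift picture, normalize the jump function to depend on a single coordinate, and then prove that the hypothesis ``two $S$-orbits stay close for all time'' forces those orbits to traverse \emph{exactly the same} $T$-exponents, from which equality of the two points is immediate.

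First I would use that expansiveness is a conjugacy invariant together with the theorem recalled in Section~\ref{subshift}: since $(X,T)$ is expansive I may assume it is a subshift with $X\subseteq\mathcal{A}^{\mathbb{Z}}$ and $T$ the shift. By the higher block presentation remark I may further assume $p(x)=\phi(x[0])$ for some $\phi:\mathcal{A}\to\mathbb{Z}^{+}$, and I set $M=\max_a\phi(a)=\sup_X p$. In these coordinates $S^{m}(x)$ is the shift of $x$ by the exponent $e_m(x):=\sum_{j=0}^{m-1}p(S^{j}x)$ (for $m\ge 0$, and by a negative amount for $m<0$). The set $\{e_m(x):m\in\mathbb{Z}\}$ is a strictly increasing bi-infinite sequence through $0$ whose consecutive gaps lie in $[1,M]$, so every integer lies within distance $M$ of some $e_m(x)$. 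Recall finally that, for a subshift, $S$ is expansive if and only if there is an $L$ such that agreement of $S^{m}x$ and $S^{m}y$ on the window $[-L,L]$ for every $m$ forces $x=y$.

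I would fix $L\ge M$ and suppose, toward the expansiveness criterion, that $x\neq y$ while $S^{m}x$ and $S^{m}y$ agree on $[-L,L]$ for all $m$; in the original sequences this reads $x[e_m(x)-L,e_m(x)+L]=y[e_m(y)-L,e_m(y)+L]$ for every $m$. The crux is the \emph{synchronization claim} that $e_m(x)=e_m(y)$ for all $m$, which I would prove by induction outward from $m=0$, where $e_0(x)=e_0(y)=0$. The base hypothesis gives $x[0]=y[0]$, hence $\phi(x[0])=\phi(y[0])$, so $e_1(x)=e_1(y)$. Assuming $e_m(x)=e_m(y)=:e_m$, the hypothesis at index $m$ becomes $x[e_m-L,e_m+L]=y[e_m-L,e_m+L]$; in particular $x[e_m]=y[e_m]$, so $p(S^{m}x)=\phi(x[e_m])=\phi(y[e_m])=p(S^{m}y)$ and therefore $e_{m+1}(x)=e_{m+1}(y)$. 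Running the same step with $S^{-1}$ handles negative indices.

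Once synchronization holds, the common exponents $e_m$ satisfy $x[e_m-L,e_m+L]=y[e_m-L,e_m+L]$ for all $m$; since the $e_m$ have gaps at most $M\le L$ and exhaust $\mathbb{Z}$, every coordinate $k$ lies in some window $[e_m-L,e_m+L]$, whence $x[k]=y[k]$ and $x=y$, contradicting $x\neq y$. Thus this $L$ witnesses expansiveness of $S$. The step I expect to be the main obstacle is precisely the synchronization claim: \emph{a priori} the two $S$-orbits could read their underlying sequences at steadily drifting offsets, and the argument must exploit the fact that $p$ is determined by the very coordinate on which closeness forces agreement, so that equality of exponents propagates one step at a time. (Notably, minimality is never used, so the lemma holds as stated for any expansive $T$.)
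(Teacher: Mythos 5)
Your proposal is correct, but it takes a genuinely different route from the paper's. Both arguments begin identically --- recode so that $(X,T)$ is a subshift and $p(x)$ depends only on $x[0]$ --- but from there the paper gives a one-step coding argument: it exhibits $(X,S)$ as conjugate to the subshift over the finite alphabet $\mathcal{B}=\{x[0,p(x)) : x \in X\}$, coding each $S$-orbit by the variable-length $T$-blocks between consecutive $S$-images, and concludes because every subshift is expansive. You instead verify the expansiveness estimate directly, with an explicit constant (window radius $L \geq M = \sup p$), via the synchronization induction $e_m(x)=e_m(y)$. The two are close in spirit --- your synchronization claim is essentially the verification that the paper's block coding is injective, a point the paper leaves implicit --- but your version is more self-contained and produces a concrete expansiveness constant, while the paper's is shorter and yields an explicit subshift presentation of $(X,S)$ that is reused later (e.g., in the proof that a speedup of an infinite minimal subshift is never conjugate to the original). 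Your parenthetical is also right: once $p$ is continuous, minimality plays no further role in either argument.

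One step of yours deserves more care than the phrase ``running the same step with $S^{-1}$'' suggests: the jump function of $S^{-1}$, namely $q = p \circ S^{-1}$, is \emph{not} determined by the single coordinate $x[0]$, so the backward step is not literally the forward step. It is, however, easily repaired inside your own setup: $q(x)$ is the unique $j \in [1,M]$ with $\phi(x[-j])=j$ (unique because $S$ is injective: two such $j$ would give two $S$-preimages of $x$, or force $\phi$ to take two values on one symbol in the periodic case), so $q(S^m x)$ is determined by the coordinates $(S^m x)[-M,-1]$, which lie inside your window since $L \geq M$. Thus window agreement at index $m$ gives $q(S^m x)=q(S^m y)$, hence $e_{m-1}(x)=e_{m-1}(y)$, and the backward induction goes through exactly as you intended.
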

\begin{proof}
	Let $(X,T)$ be expansive, i.e., a minimal subshift, and 
	suppose $T \pleadsto S$. We may assume that the jump function 
	$p(x)$ depends only on the value of $x_0$. Then $(X,S)$ is 
	conjugate to the subshift with symbols $\mathcal{B}= \{ x[0,p(x)) : x \in X\}$ 
	and where a sequence in these symbols $(w_i)$ is allowed if and only if
	for all $i\in \mathbb{Z}$ and $r>0$ the word concatention 
	$w_iw_{i+1}\cdots w_{i+r}$ is equal to $x[0,m]$ for some 
	$x \in X$ and some $m>0$.
\end{proof}

The following will allow us to show that 
a bounded speedup of a subshift is never conjugate to the original system, 
except possibly when the original system is a periodic action on a finite set.

\begin{lemma} \label{lem:word}
	Let $(X,T)$ be a subshift and let 
	$W_n(X)$ denote the set of words of length $n$ appearing in $X$. 
	If $|W_n(X)| \geq |W_{n+j}(X)|$ for some $n,j > 0$, then $X$ is finite. 
\end{lemma}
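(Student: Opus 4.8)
The plan is to prove the contrapositive: assuming $X$ is infinite, I will show that the word-counting function $n \mapsto |W_n(X)|$ is strictly increasing, so that $|W_n(X)| < |W_{n+j}(X)|$ for all $n, j > 0$. First I would establish that the function is nondecreasing: every word $w$ of length $n$ appearing in $X$ extends to a word of length $n+1$ appearing in $X$, since $w$ occurs as $x[i,i+n)$ for some $x \in X$, and then $x[i,i+n]$ is a word of length $n+1$ whose prefix is $w$. This gives a well-defined map from $W_{n+1}(X)$ onto $W_n(X)$ by truncation, hence $|W_{n+1}(X)| \geq |W_n(X)|$, and more generally $|W_{n+j}(X)| \geq |W_n(X)|$ for all $j > 0$.

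The core of the argument is to upgrade this to strict monotonicity under the assumption that $X$ is infinite. The key observation is that if $|W_{n+1}(X)| = |W_n(X)|$ for some $n$, then the truncation map above is a bijection, meaning every word of length $n$ has a \emph{unique} extension to the right by one symbol. I would argue that this forces right-determinism: once the word $x[i,i+n)$ is known, the symbol $x[i+n]$ is forced. Iterating, the entire forward tail of any $x \in X$ is determined by any single length-$n$ window. Since there are only $|W_n(X)|$ such windows and each determines the forward orbit, the space $X$ would be a finite union of eventually-periodic forward structures; combined with shift-invariance and the fact that a subshift is closed under the two-sided shift, this would make $X$ finite, contradicting our assumption.

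To make the finiteness conclusion clean, I would phrase it as follows. Suppose $|W_{n+1}(X)| = |W_n(X)|$. The unique-right-extension property means there is a well-defined map $\sigma$ on $W_n(X)$ sending a length-$n$ word $u$ to the length-$n$ word obtained by deleting the first symbol of $u$ and appending the forced next symbol. Since $W_n(X)$ is finite, the orbit of any word under $\sigma$ is eventually periodic; but because $\sigma$ is defined on all of $W_n(X)$ and is in fact a bijection (by a symmetric left-extension argument, or because the shift on $X$ is invertible), every $\sigma$-orbit is genuinely periodic. This means every $x \in X$ is a periodic point of $T$, so $X$ is a finite set of periodic sequences. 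Hence $X$ is finite.

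The main obstacle is the step from equality of word counts to global periodicity — specifically, making rigorous the claim that unique right-extension of length-$n$ windows forces every point to be periodic rather than merely eventually periodic. I expect to handle this by invoking the invertibility of the shift on the two-sided subshift $X$: because $T$ is a bijection on the compact set $X$, unique right-extension propagates backward as well, so the map $\sigma$ on $W_n(X)$ is a bijection of a finite set and therefore every orbit is periodic with no pre-periodic tail. Once periodicity of every point is established, finiteness of $X$ is immediate since a subshift in which every point is $T$-periodic with uniformly bounded period (bounded by $|W_n(X)|$) can contain only finitely many sequences.
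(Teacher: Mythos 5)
Your proof is correct and rests on the same pigeonhole mechanism as the paper's: the cardinality hypothesis forces the truncation maps between $W_{n+1}(X)$ and $W_n(X)$ (equivalently the paper's prefix projection $W_{n+j}(X)\to W_n(X)$) to be bijections, yielding unique right and left extensions of every length-$n$ window. Where the paper concludes directly that $x\mapsto x[0,n)$ is injective, so $|X|\le |W_n(X)|$, you repackage the same two-sided determinism as a bijective follower map $\sigma$ on $W_n(X)$ whose finite, purely cyclic orbits make every point periodic with period at most $|W_n(X)|$ --- an equivalent finish (and your reduction to the case $j=1$ via monotonicity matches the paper's direct handling of general $j$).
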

\begin{proof}
	Assume $|W_n(X)| \geq |W_{n+j}(X)|$. Let $\pi : W_{n+j}(X) \to W_n(X)$ denote 
	projection onto the first $n$ letters, 
	$\pi(w_1w_2 \cdots w_{n+j}) = w_1w_2 \cdots w_n$. Since $\pi$ is an onto function, 
	we see $|W_n(X)| = |W_{n+j}(X)|$ and $\pi$ is a bijection. 
	
	Since $\pi$ is a bijection, for every $x \in X$, there is a unique
	word $w \in W_{n+j}(X)$ such that $x[0,n) = \pi(w)$. In other words, 
	there is a unique word $b \in W_j(X)$ such that $x_0x_1\cdots x_{n-1}b$ is in the language of $X$. 
	This means that $x[0,n)$ determines $x[0,n+j)$. Likewise the word $x[j,n+j)$ determines $x[j,n+2j)$, etc. 
	Therefore $x[0,n)$ determines the right infinite word 
	$x_0x_1x_2\cdots$. 
	
	Repeating the argument with $\pi$ replaced by projection onto the \emph{last} $n$ letters, 
	we see that $x[0,n)$ determines the left infinite word 
	$\cdots x_{-2}x_{-1}x_0$ as well. Therefore, $|X| \leq |W_n(X)|$, and in particular, $X$ is finite. 
\end{proof}

\begin{lemma}
	Let $(X,T)$ be a minimal Cantor system. Let $p:X \to \mathbb{Z}^+$ be a continuous function such that 
	$p$ is not the constant function $1$ and let $m$ be given. There is an $N$ such that for any point $x \in X$, 
	$$\sum_{j=0}^{N-1} p(T^jx) > N+m$$
\end{lemma}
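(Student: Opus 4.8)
The plan is to exploit the fact that $p$ takes integer values and is not identically $1$, so the clopen set $U=\{x\in X: p(x)\geq 2\}$ is nonempty. The strategy is to show that every sufficiently long orbit block must enter $U$ a controlled number of times, each visit contributing at least one ``extra'' unit to the Birkhoff sum $\sum_{j} p(T^jx)$ beyond the trivial lower bound of $1$ per term.

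First I would establish uniform recurrence to $U$. Since $U$ is open and nonempty and $(X,T)$ is minimal, every forward orbit is dense, so for each $x\in X$ there is some $j\geq 0$ with $T^jx\in U$; equivalently $x\in T^{-j}U$. Thus $\{T^{-j}U : j\geq 0\}$ is an open cover of the compact space $X$, and extracting a finite subcover yields an $R\in\Z^+$ with $X=\bigcup_{j=0}^{R-1}T^{-j}U$. Consequently, for every $x\in X$ the orbit block $x,Tx,\ldots,T^{R-1}x$ meets $U$ at least once. Since $p\geq 1$ everywhere and $p\geq 2$ at the point lying in $U$, this gives the uniform estimate
$$\sum_{j=0}^{R-1} p(T^jx)\geq (R-1)+2 = R+1\qquad\text{for all }x\in X.$$

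The remaining step is a telescoping iteration. For any $n\in\Z^+$ I would split a block of length $nR$ into $n$ consecutive sub-blocks of length $R$, each starting at $T^{iR}x$ for $0\leq i<n$; applying the previous estimate to each sub-block and summing gives
$$\sum_{j=0}^{nR-1} p(T^jx)\geq n(R+1)=nR+n\qquad\text{for all }x\in X.$$
Choosing $n=m+1$ and setting $N=(m+1)R$ then yields $\sum_{j=0}^{N-1}p(T^jx)\geq N+(m+1)>N+m$ uniformly in $x$, as required.

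I expect the only genuine content to be the uniform recurrence to $U$: the passage from ``each forward orbit eventually enters the open set $U$'' (minimality) to the uniform bound $R$ valid for all $x$ simultaneously (compactness). Everything afterward is an elementary splitting argument. A minor point to state carefully is that minimality of a Cantor homeomorphism indeed forces forward orbits to be dense, which justifies that $\{T^{-j}U\}_{j\geq 0}$ covers $X$; once that is in hand, the finite subcover and the arithmetic are routine.
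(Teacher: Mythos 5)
Your proof is correct and takes essentially the same approach as the paper's: both isolate the clopen set $U=\{x : p(x)\geq 2\}$, use minimality to obtain a uniform return time $r$ (the paper asserts this in one line; you justify it via the open cover $\{T^{-j}U\}_{j\geq 0}$ and compactness, which is the standard argument), and then split the orbit block of length $N$ into $m+1$ consecutive sub-blocks of length $r$, each contributing at least one extra unit. The only differences are cosmetic --- the paper sums the excesses $p(T^jx)-1$ with $N>(m+1)r$, while you take $N=(m+1)R$ exactly --- so there is nothing to correct.
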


\begin{proof}
	Because $p(x) \geq 1$ for all $x \in X$ and $p \not \equiv 1$, there is a clopen set $U$ such that 
	for all $x \in U$, $p(x) \geq 2$. Since $T$ is minimal there is an $r$ such that for any $x \in X$, 
	one of the points $x,Tx,T^2x, \ldots T^{r-1}x$ is in $U$. In other words, 
	$$\sum_{j=0}^{r-1} \left( p(T^jx) -1 \right) \geq 1$$
	for all $x \in X$. 
	
	Choose $N > (m+1)r$ and let $x \in X$. 
	Then 
	$$\sum_{j=0}^{N-1} \left(p(T^jx) -1 \right)  \geq \sum_{k=0}^{m} \sum_{j=0}^{r-1} \left(p(T^{kr+j}x)-1\right)  \geq m+1$$
	Rearranging, we get $\sum_{j=0}^{N-1} p(T^jx) > N+m$
\end{proof}

\begin{theorem}
	Suppose $(X,T)$ is a minimal subshift where $X$ is infinite and
	$T \pleadsto S$. Then $(X,T)$ is not conjugate to $(X,S)$. 
\end{theorem}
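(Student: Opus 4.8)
The plan is to realise both systems as subshifts and compare their word-complexity functions $n\mapsto |W_n(\cdot)|$, using that a nontrivial bounded speedup forces each letter of the sped-up system to encode a strictly longer block of $X$. Before starting I would clear away two trivial cases. If $p\equiv 1$ then $S=T$ and there is nothing to prove, so I assume $p\not\equiv 1$; and if $S$ is not minimal then it cannot be conjugate to the minimal map $T$, so I assume $S$ is minimal as well. By Lemma~\ref{lem:exp}, $(X,S)$ is conjugate to a subshift $Y$ whose alphabet is $\mathcal B=\{x[0,p(x)):x\in X\}$, and after replacing $X$ by a higher block presentation I may assume $p$ depends only on $x[0]$, so that each letter of $Y$ is an $X$-block of length $p\geq 1$. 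Now suppose, for contradiction, that $(X,T)$ is conjugate to $(X,S)$, hence to $Y$. A conjugacy between subshifts is a sliding block code of some finite radius, so there is a constant $k$ for which $|W_n(Y)|\le |W_{n+k}(X)|$ for every $n$; this is the inequality I will eventually violate.

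Next I would pin down $|W_n(Y)|$ exactly and bound it below. Let $\beta$ send a word of $Y$ of length $n$ to the $X$-word obtained by concatenating its $n$ letters. Because $p$ depends only on the first symbol, the letter boundaries of $\beta(w)$ can be recovered by cutting greedily from the left, so $\beta$ is injective; thus $|W_n(Y)|$ equals the number of $X$-words that split evenly into exactly $n$ letters of $\mathcal B$. Each such word has length $\sum_{j=0}^{n-1}p(S^jx)$, and applying the preceding lemma (the one giving $\sum_{j=0}^{N-1}p(S^jx)>N+m$) to the minimal system $(X,S)$ with $m=k+1$ shows that, for all large $n$, every evenly-split $n$-letter word has length strictly greater than $n+k+1$. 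To bound $|W_n(Y)|$ from below I would produce an injection $W_{n+k+1}(X)\hookrightarrow W_n(Y)$: given $w\in W_{n+k+1}(X)$, choose any $x\in X$ with $x[0,n+k+1)=w$ and set $u=x[0,\ell)$, where $\ell=\sum_{j=0}^{n-1}p(S^jx)$ is the total length of the first $n$ letters of $\mathcal B$ read along the $S$-orbit of $x$; since $\ell>n+k+1$, the evenly-split word $u$ has $w$ as a proper prefix, and distinct $w$ therefore yield distinct $Y$-words. Hence $|W_n(Y)|\ge |W_{n+k+1}(X)|$ for all large $n$.

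Combining the two bounds gives, for all large $n$,
$$|W_{n+k+1}(X)|\le |W_n(Y)|\le |W_{n+k}(X)|,$$
so $|W_{n+k+1}(X)|\le |W_{n+k}(X)|$. Since $X$ is infinite, Lemma~\ref{lem:word} forces $n\mapsto |W_n(X)|$ to be strictly increasing, contradicting this inequality. I expect the main obstacle to be the middle step: verifying that $\beta$ is a bijection onto the evenly-split words (so that reading the speedup really does correspond to reading longer $X$-blocks) and constructing the prefix-preserving injection cleanly, since both hinge on the higher-block normalisation of $p$ and on right-extendability of words in the minimal subshift $X$. The rest is bookkeeping once the symbol-length stretching from the preceding lemma is in hand.
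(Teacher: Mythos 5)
Your proposal is correct and takes essentially the same route as the paper's proof: both normalize $p$ to depend only on $x[0]$, present $(X,S)$ as the subshift $Y$ of Lemma~\ref{lem:exp}, compare word counts through the sliding block code together with the lemma giving $\sum_{j=0}^{N-1}p(S^jx)>N+m$, and conclude with Lemma~\ref{lem:word}; the paper merely composes surjections $W_{n+N}(X)\to W_{N+1}(Y)\to W_{n+N+1}(X)$ where you sandwich $|W_n(Y)|$ between $|W_{n+k+1}(X)|$ and $|W_{n+k}(X)|$ using your explicit injection, a purely bookkeeping-level difference. One small caution: if $p\equiv 1$ then $S=T$ and the conclusion is literally false rather than vacuous, so instead of saying ``there is nothing to prove'' you should (as the paper implicitly does) read the theorem as assuming a nontrivial jump function.
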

\begin{proof}
	Without loss of generality, $p(x)$ only depends only on $x[0]$, the zero-th coordinate of $x$. 	Now assume that $(X,T)$ is conjugate to $(Y,S)$ where $(Y,S)$ is the subshift defined in Lemma \ref{lem:exp} via a conjugacy $\phi: X \to Y$. The map 
	$\phi$ is a sliding block code \cite{LindMarcus}. In other words
	$\phi$ is defined by a map 
	$\Phi:W_n(X) \to W_1(Y)$ for some $n \geq 0$. 
	Note that $W_1(Y)$ consists precisely of 
	words of the form $x[0,p(x))$ in $X$.

	For every $N>0$, we may extend $\Phi$ by concatenation to obtain an onto function 
	$\Phi:W_{n+N}(X) \to W_{N+1}(Y)$. Elements of 
	$W_{N+1}(Y)$ naturally project to words of the form 
	$x\left[0,\sum_{j=0}^{N} p(S^j x) \right)$ in $X$. 
	Applying the previous lemma, if $N$ is sufficiently large, we can guarantee that 
	for all $x$, $\sum_{j=0}^{N} p(S^j x) > n+N$. 
	By this inequality every word of length $n+N$ in $X$ 
	is a subword of $x\left[0,\sum_{j=0}^{N} p(S^j x) \right)$ for some $x$. 
	Putting this all together, we obtain an onto function from $W_{n+N}(X)$ to $W_{n+N+1}(X)$.
	Thus $|W_{n+N}(X)| \geq |W_{n+N+1}(X)|$ and $X$ is finite. 
\end{proof}

\subsection{Substitution Subshifts}

Here we consider subshifts generated by a substitution map $\theta$. 
Let $\mathcal{A}^*$ denote the set of finite concatenations of symbols in an alphabet 
$\mathcal{A}$, 
and let $\theta:\mathcal{A} \to \mathcal{A}^*$ be a function which we call 
a \emph{substitution function}.
We may extend $\theta$ to a map from $\mathcal{A}^* \to \mathcal{A}^*$ by 
concatenation and in so doing consider iterations 
$\theta^k:\mathcal{A}^* \to \mathcal{A}^*$. 
Given such a map $\theta$, we may consider the subshift
$X^{\theta}\subset \mathcal{A}^{\mathbb{Z}}$, 
the set of all $x \in \mathcal{A}^{\mathbb{Z}}$ such that for all $i< j$ in 
$\mathbb{Z}$, $x[i,j]$ is a subword of $\theta^k(a)$ for some $k \geq 0$ and
some $a \in \mathcal{A}$. 

We require additional properties of $\theta$ in order to insure that $X^{\theta}$ is a
minimal Cantor system. 

\begin{definition}
	Let $\theta: \mathcal{A} \to \mathcal{A}^*$ be a substitution function. We
	say that $\theta$ is \emph{primitive} if 
	\begin{enumerate}
		\item for any $a,b \in \mathcal{A}$, there is a $k \geq 0$ such that 
		$b \in \theta^k(a)$
		\item for any $a \in \mathcal{A}$, $\lim_{k \to \infty}|\theta^k(a)|=\infty$.
	\end{enumerate}
\end{definition}

\begin{definition}
	Let $\theta: \mathcal{A} \to \mathcal{A}^*$ be a substitution function. We
	say that $\theta$ is \emph{proper} if there exists 
	$\ell, r \in \mathcal{A}$ and a $k \geq 0$ such that 
	for all $a \in \mathcal{A}$, every word $\theta^k(a)$ begins with the symbol $\ell$
	and ends with the symbol $r$. 
\end{definition}

\begin{definition}
	Let $\theta: \mathcal{A} \to \mathcal{A}^*$ be a substitution function. We
	say that $\theta$ is \emph{aperiodic} if the subshift $X^{\theta}$ contains 
	no periodic points. 
\end{definition}

Recall the following theorems about primitive, proper, aperiodic substitutions. We refer the reader to
\cite{DHS} for more details on these results. 

\begin{theorem}
	If $\theta$ is a proper, primitive, aperiodic substitution, then 
	$(X^{\theta},T)$ is a minimal Cantor system. 
\end{theorem}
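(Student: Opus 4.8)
The plan is to verify the three defining conditions of a minimal Cantor system---that $X^{\theta}$ is infinite (hence Cantor), that $T$ acts on it as a homeomorphism, and that every orbit is dense (minimality)---using the primitivity, properness, and aperiodicity hypotheses in turn. First I would confirm that $X^{\theta}$ is a nonempty, closed, shift-invariant subset of $\mathcal{A}^{\mathbb{Z}}$, so that $(X^{\theta},T)$ is genuinely a subshift. Nonemptiness and shift-invariance follow directly from the definition of $X^{\theta}$ as the set of bi-infinite sequences all of whose subwords occur in some $\theta^k(a)$; closedness is immediate because the defining condition is a constraint on each finite window $x[i,j]$ and is therefore preserved under the product topology. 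To see that $X^{\theta}$ is infinite, I would invoke aperiodicity: a subshift with no periodic points cannot be finite, since a finite shift-invariant set consists entirely of periodic orbits. Combined with the fact that $X^{\theta}\subset \mathcal{A}^{\mathbb{Z}}$ is a closed subset of a Cantor space and is perfect (an infinite minimal subshift has no isolated points), this gives that $X^{\theta}$ is a Cantor space, on which the shift $T$ is automatically a homeomorphism.

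The substantive content is minimality, and here I would use the combinatorial criterion stated earlier in the excerpt: a subshift is minimal if and only if every word $w$ in its language is \emph{uniformly recurrent}, meaning there is an $r>0$ such that $w$ is a subword of $x[i,i+r]$ for every $x$ and every $i$. The key step is to show that primitivity forces this uniform recurrence. Given a word $w$ in the language of $X^{\theta}$, it appears as a subword of $\theta^{k_0}(a)$ for some $a$ and some $k_0$. By primitivity condition (1), there is a $K$ such that for every symbol $b\in\mathcal{A}$, the word $\theta^{k_0}(a)$---and hence $w$---appears inside $\theta^{K}(b)$; taking the maximum over the finitely many symbols $b$ gives a single exponent $K$ that works uniformly. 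I would then argue that every $x\in X^{\theta}$ can be viewed, via the structure of iterated substitutions, as a concatenation of blocks of the form $\theta^{K}(b)$, so that $w$ recurs with gaps bounded by $r = 2\max_{b}|\theta^{K}(b)|$, which is finite by primitivity condition (2) ensuring the blocks have well-defined finite lengths. This establishes uniform recurrence of $w$, and since $w$ was arbitrary, minimality follows.

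The main obstacle I anticipate is the claim that each $x\in X^{\theta}$ admits a clean decomposition into $\theta^{K}$-blocks with controlled alignment---the so-called \emph{recognizability} or unique-desubstitution issue. Strictly, a bi-infinite point need not factor tidily, and the alignment of the block boundaries relative to the window $[i,i+r]$ requires care; this is precisely where properness earns its keep, since the common first symbol $\ell$ and last symbol $r$ of all $\theta^{k}(a)$ let me locate block boundaries unambiguously and control the worst-case gap between consecutive occurrences of $w$. I would handle this by arguing that properness lets one recognize where each $\theta^{K}(b)$-block begins and ends within $x$, so that any window of length $r$ spanning at least one full block necessarily contains a copy of $w$. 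The remaining details---verifying that $T$ is a bijection on $X^{\theta}$ and that no isolated points arise once minimality and infinitude are known---are routine consequences of the subshift structure and the general fact that an infinite minimal subshift is a Cantor system.
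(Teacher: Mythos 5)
The paper states this theorem without proof, citing \cite{DHS}, so your argument must stand on its own; its skeleton is the standard one (uniform recurrence via primitivity, aperiodicity for infinitude, infinite minimal subshift implies Cantor), but there is a concrete gap at the central step. From the paper's primitivity condition (1) -- for each pair $a,b$ \emph{some} $k$ with $b$ occurring in $\theta^k(a)$ -- you cannot obtain a single exponent $K$ with $w$ a subword of $\theta^K(b)$ for every $b$ by ``taking the maximum over the finitely many symbols $b$,'' because occurrence is not monotone in the exponent: $a$ occurring in $\theta^{k_b}(b)$ does not imply $a$ occurs in $\theta^{k}(b)$ for $k>k_b$. For instance, for $\theta:0\mapsto 11$, $1\mapsto 00$ one has $0\in\theta^2(0)$ but $0\notin\theta^3(0)=1^8$; condition (1) here is merely irreducibility of the incidence matrix, and irreducible-but-imprimitive substitutions split the alphabet into classes cyclically permuted by $\theta$, which can make $X^\theta$ a disjoint union of two subshifts and hence non-minimal. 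The hypothesis that repairs this is exactly properness, which you never connect to this step: since every $\theta^k(b)$ begins with $\ell$, every $\theta^{j}(b)$ with $j\ge k$ begins with $\theta^{j-k}(\ell)$; applying condition (1) to get $a\in\theta^{m_0}(\ell)$, so that $w$ is a subword of $\theta^{m_0+k_0}(\ell)$, the single exponent $K=k+m_0+k_0$ then works for all $b$ simultaneously.

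Meanwhile you spend properness on a recognizability worry that the proof does not need and that properness alone cannot settle. Unique desubstitution of bi-infinite points is Moss\'e's theorem (a genuinely nontrivial result, which the paper invokes later for a different purpose), and seeing the letter $\ell$ does not locate a block boundary -- precisely because \emph{all} blocks begin with $\ell$. But minimality never requires decomposing the bi-infinite point $x$: by the very definition of $X^\theta$, any window $x[i,i+r]$ is a subword of some $\theta^n(c)=\theta^K\bigl(\theta^{n-K}(c)\bigr)$, which is by construction a concatenation of blocks $\theta^K(b)$ each of length at most $L=\max_b|\theta^K(b)|$; hence any window of length $2L$ contains a complete block and therefore a copy of $w$. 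With these two repairs -- deriving the uniform $K$ from properness rather than a false monotonicity, and replacing the recognizability detour with the finite-window argument -- your proof is correct and becomes the classical argument the paper implicitly invokes through \cite{DHS}.
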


\begin{theorem}
	If $(X^{\theta},T)$ is a minimal Cantor system associated to a
	substitution $\theta$, then there is a proper, primitive, aperiodic
	substitution $\tau$ such that $(X^{\theta},T)$ and $(X^{\tau},T)$
	are topologically conjugate. 
\end{theorem}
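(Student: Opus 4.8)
The plan is to exhibit a proper, primitive, aperiodic substitution generating the \emph{same} subshift (up to a letter-to-letter recoding), so that the conjugacy is essentially built in. I would first dispose of two of the three properties. Since $X^{\theta}$ is a Cantor set it is infinite, and an infinite minimal subshift has no periodic points (a periodic orbit would be a finite, closed, proper, $T$-invariant subset, contradicting minimality); hence \emph{any} substitution generating $X^{\theta}$ is automatically aperiodic. For primitivity, I would delete every letter not occurring in the language of $X^{\theta}$ and work on the surviving alphabet. The minimality criterion recalled above---each word recurs with bounded gaps---forces that for every ordered pair of surviving letters $a,b$ the symbol $b$ occurs in $\theta^{k}(a)$ for all large $k$, and so a suitable power $\theta^{K}$ is primitive. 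Because primitivity guarantees every factor of some $\theta^{k}(a)$ is a factor of $\theta^{Kj}(b)$ for suitable $j$ and $b$, the languages of $X^{\theta^{K}}$ and $X^{\theta}$ coincide and $X^{\theta^{K}}=X^{\theta}$. Thus I may assume $\theta$ is already primitive and aperiodic, and only properness remains.

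To force properness I would recode $X^{\theta}$ by \emph{return words} to a well-chosen marker. By primitivity there is an exponent $N$ and a word $w$ that is a factor of $\theta^{N}(a)$ for every letter $a$; after replacing $\theta$ by $\theta^{N}$ I may assume $w$ appears in the image of each letter, and I would choose $w$ to be \emph{unbordered} (no proper prefix of $w$ is also a suffix), so that distinct occurrences of $w$ never overlap. By bounded-gap recurrence the occurrences of $w$ in any $x\in X^{\theta}$ are syndetic, so the set $\mathcal{R}$ of return words to $w$ is finite; cutting each $x$ at the successive occurrences of $w$ writes $x$ uniquely as a bi-infinite concatenation of elements of $\mathcal{R}$. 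This cutting is realized by a sliding block code with a sliding block inverse, hence gives a topological conjugacy from $(X^{\theta},T)$ onto the subshift $(Y,T)$ over the alphabet $\mathcal{R}$. The structural input I would then invoke is that, since the occurrences of $w$ inside $\theta$-images are dictated by the occurrences of $w$ one level below, the image $\theta(R)$ of each return word decomposes canonically into return words; this defines a \emph{return substitution} $\tau$ on $\mathcal{R}$ with $X^{\tau}=Y$, and $\tau$ inherits primitivity from $\theta$.

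It remains to see that $\tau$ can be made proper, and this is where the choice of $w$ does the real work and where I expect the main difficulty to lie. Because $w$ is unbordered, every return word begins with the full word $w$, so every $\theta(R)$ begins with $\theta(w)$; consequently the first return word appearing in the decomposition of $\theta(R)$ is the same for all $R$, i.e. $\tau$ is left-proper (every $\tau(R)$ starts with a common letter of $\mathcal{R}$). Running the symmetric construction on the right---cutting just \emph{after} each occurrence of $w$, or equivalently marking with a common suffix as well---makes the last return word of each $\theta(R)$ independent of $R$, giving right-properness; combining the two yields a single marker for which $\tau$ is proper. The genuine obstacles are therefore the two bookkeeping claims bundled into the previous paragraph and this one: first, that the return-word decomposition is actually respected by $\theta$, so that $\tau$ is a bona fide substitution with $X^{\tau}=Y$ (this is the content of the return-word / derived-sequence machinery for primitive substitutions); and second, that the marker $w$ can be selected to control \emph{both} ends simultaneously. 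Once $\tau$ is shown to be proper, primitive, and aperiodic, the conjugacy $(X^{\theta},T)\cong(Y,T)=(X^{\tau},T)$ from the recoding step finishes the proof.
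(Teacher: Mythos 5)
Your overall route---recoding by return words to get a proper, primitive, aperiodic substitution generating a conjugate subshift---is the same machinery as in the source the paper cites for this statement (the paper gives no proof of its own, deferring to \cite{DHS}, and the return-word/derived-substitution technique there is due to Durand; see also \cite{CANT}). But your first reduction contains a genuine error. You claim that, after discarding letters absent from the language, minimality forces every surviving letter $b$ to occur in $\theta^{k}(a)$ for all large $k$, so that some power $\theta^{K}$ is primitive. Bounded-gap recurrence only yields that $b$ occurs in every sufficiently long word of the language; to conclude that $b$ occurs in $\theta^k(a)$ you need $|\theta^{k}(a)|\to\infty$, which is exactly condition (2) in the paper's definition of primitivity and which can fail for letters that genuinely occur in the language. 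The non-primitive Chacon substitution $\theta(0)=0010$, $\theta(1)=1$ generates a minimal, aperiodic (Cantor) subshift in which both letters occur, yet $\theta^{k}(1)=1$ for every $k$, so $0$ never appears in $\theta^{k}(1)$ and no power of $\theta$ is primitive. Non-growing letters cannot be cured by powering up or by deleting symbols, and handling them is a substantial part of the content of the theorem for arbitrary $\theta$; in the literature this is done by proving the subshift itself is linearly recurrent and running the return-word construction directly on the subshift, not on $\theta$.

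Even granting primitivity of $\theta$, the steps you describe as bookkeeping are under-specified precisely where the work lies. For $\theta(R)$ to decompose canonically into return words to $w$, the cut points must be respected: $\theta$ must carry the occurrence of $w$ at the start of $R$ to an occurrence of $w$ at position $0$ of $\theta(R)$, and ``$w$ appears in the image of every letter'' provides no such alignment; the standard fix is to choose $w$ a prefix of a fixed point of a power of $\theta$, so that $w$ is a prefix of $\theta(w)$. Similarly, your left-properness claim (that the first return word in the decomposition of $\theta(R)$ is independent of $R$) needs $\theta(w)$ to contain at least two occurrences of $w$---otherwise the second occurrence sits inside $\theta$ of the tail of $R$, which depends on $R$---again fixable only after passing to a further power. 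The two-sided step is the real gap: cutting after occurrences of $w$ is a different recoding over a different return alphabet, and nothing in your sketch shows a single marker can be made compatible on both sides simultaneously; in \cite{DHS} two-sided properness is obtained not by a cleverer marker but via stationary, properly ordered Bratteli--Vershik models, reading a proper substitution off a telescoped stationary diagram. Your aperiodicity argument is correct, and unbordered factors of arbitrarily large length do exist in aperiodic minimal subshifts, but that fact also deserves a citation rather than an assertion.
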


Equipped with these preliminaries, we will proceed by examining examples of bounded speedups of substitutions which will help illuminate our general results. 

\subsection{Powers versus bounded speedups}

In our first example we show that there can be a bounded minimal speedup $T \rightsquigarrow S$ with orbit number $2$ even when $T^2$ is not minimal (see also remark \ref{remex}). 
In so doing, we show that 
studying bounded speedups of substitutions is more general than studying powers of substitutions. Because $T^2$ is not minimal, but $S$ is minimal,  
$S$ cannot be conjugate to $T^2$. It follows from Lemma \ref{lem:word} that $T^k$ is not conjugate to $S$ for $k > 2$. 

\subsubsection{Example} \label{ex-orbnum2}
Consider the substitution below on $\mathcal{A}=\{0,1\}$. 
\begin{center}
	\hspace*{\fill}
	$\theta:0\mapsto 0011\hfill \theta: 1\mapsto 001011$
	\hspace*{\fill}
\end{center}
This is a primitive, proper, aperiodic substitution and 
$(X^{\theta},T)$ is
a minimal Cantor system with respect to the shift map $T$. 
Note here that since the $\theta$-word lengths are all even, 
$T^2$ is not minimal. 

On the other hand, consider $S(x) = T^{p(x)}(x)$ where 
$p:X\rightarrow\Z^{+}$ is defined below. 
Set $A = \{x: x[0,5] = 001011 \}$ and let 
$$p(x)=\begin{cases}
3 & \text{ if $x\in A$}\\
1 & \text{ if $x\in TA$}\\
2 & \text{ otherwise.}
\end{cases}$$
Letting $g(x)$ be the indicator function of $A$, we see that 
$$p(x) = 2 + g(x) - g(T^{-1}x).$$
Below we develop some general theory to show that $S(x)$ is minimal which will also be useful 
in following sections. 

\subsubsection{Kakutani-Rokhlin  Partitions and Substitutions}
We will first introduce Kakutani-Rokhlin partitions as they relate to substitutions. If $\theta$ is a primitive, 
aperiodic substitution, then for every $x \in X$ and every 
$k \geq 1$, there is a decomposition of the sequence 
$x$ into $\theta^k$-words; it follows from Theorems of 
Moss\'{e} that this decomposition is unique \cite{Mosse1,Mosse2}. In other words,
for every $x \in X$ and every $k \geq 1$ 
there exist a unique set of integers
$\cdots < n_{-2} < n_{-1} < 0 \leq n_0 < n_1<n_2<\cdots $ 
and symbols $\{a_j \in \mathcal{A} :  j \in \mathbb{Z}\}$ 
such that for all $j$, $x[n_j,n_{j+1}) = \theta^k(a_j)$.

It will ease our notation to assume the substitution $\theta$ is defined on 
$I = \{1,2,\ldots, |\mathcal{A}|\}$. For each $i\in I$  
let $A_i(k)$ denote the set of points $x\in X$ such that in the decomposition of $x$ 
into $\theta^k$-words, $n_0=0$ and
$x[n_0,n_1)=\theta^k(i)$. For $i \in I$, let $l_i(k) = |\theta^k(i)|$.
Let $\mathcal{P}(k)=
\{ T^j A_i(k) \ : \ i \in I, 0 \leq j < l_i(k)\}$.
\begin{proposition}
	Suppose $\theta$ is a proper, primitive, aperiodic substitution on an alphabet $\mathcal{A}$. 
	With the above notation, 
	\begin{itemize}
		\item each $\mathcal{P}(k)$ is a clopen partition of $X^{\theta}$,
		\item the partitions $\{ \mathcal{P}(k) : k \geq 1 \}$ generate the topology of $X^{\theta}$, 
		\item the set $\cap_{k \geq 1} \cup_{i \in I} A_i(k)$ is a singleton. 
	\end{itemize}
\end{proposition}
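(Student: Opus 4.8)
The plan is to verify the three bullet points by working directly with the uniqueness of the $\theta^k$-decomposition guaranteed by Moss\'{e}'s theorems, which is already invoked just before the statement. The central structural fact I would use is that for a point $x \in T^j A_i(k)$, the floor that $x$ sits in records exactly two pieces of data: the symbol $a_0 = i$ whose $\theta^k$-image currently covers position $0$, and the offset $j$ of position $0$ within that image (so $0 \le j < l_i(k)$). Since the decomposition is unique, each $x$ determines these data unambiguously, and conversely every point lies in exactly one floor; this is precisely the assertion that $\mathcal{P}(k)$ is a partition.

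For the first bullet, I would argue that each floor $T^j A_i(k)$ is clopen. The defining condition ``the $\theta^k$-word covering coordinate $0$ equals $\theta^k(i)$ and coordinate $0$ sits at offset $j$ within it'' can be detected by reading finitely many coordinates of $x$: concretely, whether $x$ belongs to a given floor is determined by the window $x[-L, L]$ for $L$ large enough to see the surrounding $\theta^k$-words (here properness is convenient, since the distinguished left and right symbols $\ell, r$ let one unambiguously locate word boundaries within a bounded window). A condition depending on only finitely many coordinates defines a clopen set in the product topology, so each floor is clopen, and the partition is finite, hence a clopen partition. That $\mathcal{P}(k)$ is genuinely a partition (disjoint floors covering $X^{\theta}$) follows from existence-and-uniqueness of the decomposition.

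For the second bullet, I would show the $\{\mathcal{P}(k)\}$ generate the topology by checking they separate points and refine down to arbitrarily small diameter. Given distinct $x \ne y$, they differ at some coordinate $n$; since $\lim_{k\to\infty}|\theta^k(a)| = \infty$ by primitivity, for large $k$ the floor containing $x$ pins down a long block of coordinates around $0$, and by shifting (applying $T$) one sees that membership in the $\mathcal{P}(k)$-floors determines arbitrarily long central windows of the sequence. Hence for large $k$ the points $x$ and $y$ lie in different floors, so the partitions separate points; combined with each floor being clopen, a standard argument gives that they generate the topology. For the third bullet, $\cap_k \cup_i A_i(k)$ consists of those $x$ for which, at every level $k$, coordinate $0$ is the \emph{left endpoint} of the covering $\theta^k$-word, i.e. a $\theta^k$-word starts at position $0$ for all $k$; I would show this forces the entire right-infinite tail $x_0x_1x_2\cdots$ to be the fixed right-infinite word $\lim_k \theta^k(\ell)$ (using properness, so all $\theta^k(a)$ begin with $\ell$), and that the left tail is likewise determined, pinning $x$ to a single point.

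The main obstacle I expect is the third bullet, and more precisely controlling the \emph{left} half of the sequence. For the nested base sets, requiring $n_0 = 0$ at every level forces the one-sided word $x[0,\infty)$ to be the common prefix fixed point $\theta^{\infty}(\ell)$, which is clean. But $\cap_k \cup_i A_i(k)$ only constrains where the word containing $0$ begins, not where the word to its left ends in a canonical way, so one must check that the compatibility between consecutive levels $k$ and $k+1$ (a $\theta^{k+1}$-word is a concatenation of $\theta^k$-words) forces the backward coordinates $\cdots x_{-2}x_{-1}$ to be determined as well; this is where properness (the fixed right symbol $r$) does the work, making the left tail equal to the fixed left-infinite word ending in $\theta^{\infty}(r)$. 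Verifying that these two one-sided limits exist and are unique, so their concatenation is a single bi-infinite point, is the delicate step, but it is exactly the content of the standard fixed-point construction for proper primitive substitutions, so I would lean on that.
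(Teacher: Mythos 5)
The paper itself supplies no proof of this proposition---it is stated as a standard consequence of the Moss\'{e} recognizability theorems cited in the preceding paragraph---so your outline can only be judged on its own terms; its overall architecture (uniqueness of the $\theta^k$-decomposition for the partition property, finite-window determination for clopenness, growing word lengths plus the fixed-point construction for generation and the singleton) is the standard and correct one. However, one justification is wrong as stated: properness does \emph{not} let you locate word boundaries in a bounded window. The fact that every $\theta^k(a)$ begins with $\ell$ and ends with $r$ only tells you that cuts are flanked by the block $r\ell$; that block can also occur strictly inside some $\theta^k(a)$, so an occurrence of $r\ell$ is not a certificate of a cut. The claim that membership in a floor is determined by $x[-L,L]$ for some finite $L$ is precisely Moss\'{e}'s bilateral recognizability---a theorem for primitive aperiodic substitutions, not a consequence of properness. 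Alternatively you can bypass the uniform statement: the set of points admitting \emph{some} $\theta^k$-decomposition in which a $\theta^k(i)$-word covers coordinate $0$ at offset $j$ is closed (extract a limiting decomposition by compactness), these finitely many sets are pairwise disjoint by uniqueness and cover $X^{\theta}$, and finitely many disjoint closed sets covering a compact space are clopen.

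There is also a gap in your separation-of-points step. Knowing the floor of $\mathcal{P}(k)$ containing $x$ pins down the window $x[-j, l_i(k)-j)$, which is long but may lie almost entirely on one side of coordinate $0$ (if $j = l_i(k)-1$ it determines nothing to the right of $0$), so ``arbitrarily long central windows'' is not automatic. The missing case is when one edge of these nested windows stabilizes: if $x\neq y$ share all floors and the right edges stabilize at some height $R$, then a $\theta^k$-word begins at coordinate $R$ of both points at every level, so $T^R x$ and $T^R y$ both lie in $\bigcap_{k} \bigcup_{i} A_i(k)$, and the third bullet forces $T^R x = T^R y$, i.e.\ $x = y$ (symmetrically if the left edges stabilize; if neither stabilizes the windows exhaust $\mathbb{Z}$ and $x=y$ directly). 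So the third bullet should be proved first and fed into the second. Your treatment of the third bullet itself is correct, and your stated worry there is unfounded: if $n_0 = 0$ at level $k$ then a $\theta^k$-word \emph{ends} at coordinate $-1$, so (after replacing $\theta$ by a power so that properness holds with $k=1$) the left tail is pinned to the suffixes $\theta^k(r)$ and the right tail to the prefixes $\theta^k(\ell)$, giving exactly one candidate point; nonemptiness follows from compactness of the nested sets $\bigcup_i A_i(k)$.
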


\subsubsection{Orbit Block Labeling}
As in the previous section, given a bounded speedup $T \pleadsto S$, 
for sufficiently large $k$, 
we introduce a \emph{labeling} $\mathcal{L}_k$ of $\mathcal{P}(k)$ based on $S$-paths through each column. By selecting $k$ sufficiently large and using the fact $\theta$ is proper, we may assume 
\begin{enumerate}
	\item \label{jumpcont}
	the jump function $p$ is constant on floors of $\mathcal{P}(k)$, 
	\item  \label{tallcol}
	$l_i(k) > \max p$ for all $i$,
	\item \label{lowfloors} the jump function $p$ is constant on 		
	sets of the form 
	$T^n \left( \cup_i A_i(k) \right)$ for $0 \leq n \leq \max p$. 
\end{enumerate}

Our labeling will be a function $\mathcal{L}_k$ from $\mathcal{P}(k)$ to the 
set $\{0,1,\ldots, c-1\}$ where $c$ is the orbit number for $T \rightsquigarrow S$. 
Fix $i$, and consider the floors of the $i$th column of $\mathcal{P}(k)$. 
We define the labeling recursively beginning with the base floor. 

Label the base floor $A_i(k)$ with $0$. 
Label any other floor $F$ in this column with a $0$ 
if $F = S^jA_i(k)$ and $\sum_{k=0}^{j-1} pS^k(x) < l_i(k)$ for $x \in A_i(k)$. 
Now consider the lowest unlabeled floor $T^{j_1}A_i(k)$. Label this floor with a $1$. 
Label any other floor $F$ in this column with a $1$ if $F = S^jA_i(k)$ 
and $\sum_{l=0}^{j-1} pS^l(x) < l_i(k)$ for $x \in T^{j_1} A_i(k)$.
Label the lowest unlabeled floor with a $2$ and continue. 
Continue in this manner until all floors in the $i$th column are labeled with a label
$0,1,\ldots , c-1$. 

The labeling has the property that if a floor $F$ is the 
$S$-image of a lower floor $E$ in the same column, then 
$E$ and $F$ have the same labeling.
It is also the case that if $F$ is a floor of $\mathcal{P}(k)$ labeled $\ell$ 
and is not the floor of maximal height in column $i$ with this property then 
$SF$ is equal to the next higher floor in column $i$ which has label $\ell$. 

Note that item \ref{lowfloors} above guarantees that the labels on any two floors with 
the same height $<\max{p}$ are the same. Therefore, if $F$ is a floor of maximal height in 
column $i$ of $\mathcal{P}(k)$ with label $\ell$ then for all $x,y \in F$ 
then the label of the floor containing
$S(x)$ is the same as the label of the floor containing $S(y)$. 

\subsubsection{Orbit Block Labeling Permutations}
For $k$ large enough so that conditions \ref{jumpcont}, \ref{tallcol} and \ref{lowfloors}
above, define $\pi^{(k)}_i(\ell)$ to be the label of the floor containing $S(x)$ for all 
$x$ in the floor of maximal height in column $i$ labeled $\ell$.
\begin{proposition}
	Each $\pi^{(k)}_i$ is a permutation of the set $\{0,1, \ldots ,c-1\}$.
\end{proposition}
\begin{proof}
	This follows from the fact that $\pi^{(k)}_i$ is injective. If $\pi^{(k)}_i$ 
	is not injective then there are two distinct points in the same $T$-orbit 
	which have the same $S$-image. 
\end{proof}

Let $\pi^{(k)}$ denote the tuple of permutations 
$\langle \pi^{(k)}_1, \pi^{(k)}_2, \ldots \pi^{(k)}_n \rangle$.

\begin{lemma} \label{constperm}
	There exists a $K \geq 1$ such that $\pi_i^{(K)}=\pi_i^{(jK)}$ for all $1 \leq i \leq n$
	and all $j \in \mathbb{N}$. 
\end{lemma}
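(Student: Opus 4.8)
The plan is to show that the tuple $\pi^{(k)} = \langle \pi_1^{(k)}, \ldots, \pi_n^{(k)} \rangle$ evolves according to a single fixed rule as $k$ increases while taking values in a finite set, so that the sequence is eventually periodic; the multiplicative conclusion $\pi_i^{(K)} = \pi_i^{(jK)}$ then follows by taking $K$ to be a suitable multiple of the eventual period.

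First I would establish the substitution analog of Lemma~\ref{permpower}. Fix $k$ large enough that the labelings $\mathcal{L}_k$ and $\mathcal{L}_{k+1}$ are both defined (conditions \ref{jumpcont}--\ref{lowfloors} hold at levels $k$ and $k+1$). Each column $i$ of $\mathcal{P}(k+1)$ decomposes, following the word $\theta(i) = b_1 b_2 \cdots b_m$ with $m = |\theta(i)|$, into consecutive copies of the columns $b_1, b_2, \ldots, b_m$ of $\mathcal{P}(k)$ stacked from the base upward. Placing a point $x$ at a minimal-height floor of column $i$ of $\mathcal{P}(k+1)$ with label $\ell$ and following its $S$-path upward, the label is unchanged until $x$ first exits the top of the copy of column $b_1$, at which point (by the definition of $\pi_{b_1}^{(k)}$ together with the low-floor consistency from condition \ref{lowfloors}) the label becomes $\pi_{b_1}^{(k)}(\ell)$; crossing the top of the copy of $b_2$ sends it to $\pi_{b_2}^{(k)}\pi_{b_1}^{(k)}(\ell)$, and so on. When the path finally exits the top of column $i$ of $\mathcal{P}(k+1)$, which is the top of the copy of $b_m$, the accumulated label is $\pi_{b_m}^{(k)} \cdots \pi_{b_1}^{(k)}(\ell)$, and this is by definition $\pi_i^{(k+1)}(\ell)$. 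Hence
\[
\pi_i^{(k+1)} = \pi_{b_m}^{(k)} \circ \pi_{b_{m-1}}^{(k)} \circ \cdots \circ \pi_{b_1}^{(k)}, \qquad \theta(i) = b_1 b_2 \cdots b_m .
\]
This exhibits a single map $\Psi$ on the set of $n$-tuples of permutations of $\{0,1,\ldots,c-1\}$, depending only on $\theta$, with $\pi^{(k+1)} = \Psi(\pi^{(k)})$ for all sufficiently large $k$.

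Next I would invoke finiteness: the set of $n$-tuples of permutations of $\{0,1,\ldots,c-1\}$ has exactly $(c!)^n$ elements, so the forward $\Psi$-orbit $(\pi^{(k)})_{k \geq k_0}$ must repeat. Thus there exist $k_1 \geq k_0$ and a period $P \geq 1$ with $\pi^{(k)} = \pi^{(k')}$ whenever $k, k' \geq k_1$ and $k \equiv k' \pmod P$. Finally I would take $K$ to be any multiple of $P$ with $K \geq k_1$. Then for every $j \geq 1$ we have $jK \geq K \geq k_1$ and $jK \equiv 0 \equiv K \pmod P$, so $\pi^{(jK)} = \pi^{(K)}$, that is, $\pi_i^{(K)} = \pi_i^{(jK)}$ for all $1 \leq i \leq n$, as required.

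The step I expect to be the main obstacle is the recursion formula, specifically justifying that crossing the top of each level-$k$ sub-column transforms the $\mathcal{L}_{k+1}$-label exactly by the corresponding $\pi_{b_j}^{(k)}$. This requires verifying that $\mathcal{L}_k$ and $\mathcal{L}_{k+1}$ agree on low floors (so the initial label $\ell$ is common to both levels) and that the wraparound behavior recorded after the definition of the labeling transfers correctly from the $\mathcal{P}(k)$-column structure to the decomposition of a $\mathcal{P}(k+1)$-column. Once this bookkeeping is in place the composition is forced, exactly as in the proof of Lemma~\ref{permpower}.
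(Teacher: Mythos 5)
Your proof is correct and takes essentially the same approach as the paper: the recursion $\pi_i^{(k+1)} = \pi_{b_m}^{(k)} \circ \cdots \circ \pi_{b_1}^{(k)}$ for $\theta(i) = b_1 \cdots b_m$ (which the paper states without the column-decomposition bookkeeping you supply), followed by finiteness of the $(c!)^n$ tuples forcing eventual periodicity, and the choice of $K$ as a sufficiently large multiple of the period. Your handling of the final step ($jK \equiv K \bmod P$ with $jK \geq k_1$) is a correct and slightly more careful rendering of the paper's concluding sentence.
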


\begin{proof}
	There is a well-defined function that transforms the vector of 
	permutations $\langle \pi_1^{(k)}, \pi_2^{(k)}, \ldots, \pi_n^{(k)} \rangle$ to 
	$\langle \pi_1^{(k+1)}, \pi_2^{(k+1)}, \ldots, \pi_n^{(k+1)} \rangle$ given by the following:
	if $\theta (a_i) = a_{i_1}a_{i_2}\cdots a_{i_m}$ then 
	$\pi_i^{(k+1)} = \pi_{i_m}^{(k)} \circ \pi_{i_{m-1}}^{(k)}\circ \cdots \circ \pi_{i_1}^{(k)}$. 
	Set $\pi^{(k)} = \langle \pi_1^{(k)}, \pi_2^{(k)}, \ldots, \pi_n^{(k)} \rangle$. 
	Because there are only finitely many possibilities for $\pi^{(k)}$ this 
	transformation is eventually periodic with some period $N$. If $K$ is a sufficiently large
	multiple of $N$ then $\pi^{(K)}= \pi^{(jK)}$ for all $j \in \mathbb{N}$.
\end{proof}

\subsubsection{A sufficient condition for minimality of $S$}
Consider $k$ satisfying the conditions of Lemma \ref{constperm}
and sufficiently large to satisfy 
conditions \ref{jumpcont}, \ref{tallcol} and \ref{lowfloors}.
The minimal system that is generated by the substitution 
$\theta^k$ is the same as that generated by $\theta$. Thus we
may assume without loss of generality that $\theta$ satisfies all 
of these hypotheses with $k=1$ and Lemma \ref{constperm} with $K=1$. 
As such all $\pi^{(k)}$ are equal and 
we will generally drop the superscript going forward. 

Given $x \in X$, then $x$ has a unique decomposition into $\theta$-words, 
$x[n_j,n_{j+1})=\theta(i_j)$, $j \in \mathbb{Z}$. Associated to this decomposition is a
label sequence $(\ell_j)_{j \in \mathbb{Z}}$ in the following way. 
The point $x$ is an element of one of the floors of $\mathcal{P}(1)$, let 
$\ell_0$ be the $\mathcal{L}_1$-label for this floor. 
For $j\geq 0$ set $\ell_{j+1} = \pi_{i_j}(\ell_j)$ and 
$j \leq 0$ set $\ell_{j-1} = \pi_{i_{j-1}}^{-1}(\ell_j)$. In this way, 
if $S^l(x) = T^n(x)$ where $n \in [n_j,n_{j+1})$ then 
$S^l(x)$ belongs to a floor with $\mathcal{L}_1$-label $\ell_j$ in the 
$i_j$ column of $\mathcal{P}(k)$. 
Thus to every $x \in X$, we can associate the sequence
$(i_j, \ell_j)$, which we will refer to as the 
\emph{symbol-label} sequence for $x$.

Now set $C = \{0,1,\ldots, c-1\}$ and consider an alphabet $I \times C$. 
Let $\sigma:I \times C \to (I \times C)^*$ be the function 
defined by $$\sigma(i,\ell) = (i_1,\ell_1)(i_2,\ell_2)\cdots(i_m,\ell_m)$$
where $\theta(i) = i_1 i_2 \cdots i_m$, $\ell_1 = \ell$ and 
$\ell_{k+1}=\pi_{i_k}(\ell_k)$ for $k\geq 1$. 

Note that the language generated by $\sigma$ is precisely the set of 
	symbol-label sequences for $x \in X$. 
Note further that the exact same analysis applies to $\theta^k$ and labeling $\mathcal{L}_k$.
Since $\pi^{(k)}$ is the same for all $k$, we have the same 
substitution map $\sigma$ generating the same symbol-label sequences.  

\begin{lemma}
	Suppose $\theta$ is a proper, primitive, aperiodic substitution and 
	$(X,T)$ is the subshift generated by $\theta$. Further assume
	$T \pleadsto S$ where $S:X \to X$ is a homeomorphism. 
	Let $\sigma$ be the substitution defined as above,
	then $\sigma$ is primitive if and only if $S$ is minimal. 
\end{lemma}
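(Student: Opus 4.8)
The plan is to establish the biconditional by connecting minimality of $S$ to primitivity of $\sigma$ through the symbol-label sequences. The key observation already set up in the excerpt is that the language generated by $\sigma$ is precisely the set of symbol-label sequences for points $x \in X$, and that the $\mathcal{L}_k$-label records which of the $c$ distinct $S$-orbits (within a single $T$-orbit) a point belongs to. Since $\sigma$ is a substitution on the alphabet $I \times C$, I would invoke the standard fact that for a primitive substitution the associated subshift is minimal, and conversely. So the real content is to verify that minimality of $(X,S)$ is equivalent to minimality of the symbol-label subshift generated by $\sigma$.

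First I would fix $k$ as in the reduction already made (so $\theta$ itself satisfies conditions \ref{jumpcont}, \ref{tallcol}, \ref{lowfloors} and Lemma \ref{constperm} with $K=1$), and make precise the correspondence between $S$-orbits in $X$ and orbits in the symbol-label subshift $X^{\sigma}$. The central claim is that two points lie in the same $S$-orbit if and only if their symbol-label sequences lie in the same $T$-orbit (shift-orbit) of $X^{\sigma}$. Concretely, applying $S$ to a point $x$ either keeps it within the current $\theta$-word (incrementing position) or advances to the next $\theta$-word, and in the latter case the label transforms exactly by the permutation $\pi_{i_j}$; this is the content of the symbol-label recursion $\ell_{j+1} = \pi_{i_j}(\ell_j)$. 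Thus tracking an $S$-orbit in $X$ corresponds to reading off a symbol-label sequence, and the labels are precisely the data distinguishing the $c$ distinct $S$-orbits sitting inside one $T$-orbit.

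Next I would argue both directions. For the forward direction, suppose $S$ is minimal. Then every $S$-orbit is dense, which forces every symbol-label word $(i,\ell)$ to appear in the symbol-label sequence of every point; by the characterization of minimality for subshifts recalled in Section \ref{subshift}, this says $X^{\sigma}$ is minimal, and for a substitution subshift minimality is equivalent to primitivity of $\sigma$ (given that $\theta$, hence $\sigma$, has growing word-lengths, so the nontrivial requirement is the first clause of the definition of primitivity). For the converse, suppose $\sigma$ is primitive; then $X^{\sigma}$ is minimal, so every pair $(i,\ell)$ appears in every symbol-label sequence, meaning every label $\ell \in C$ is realized along the $S$-orbit of every point. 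Combined with minimality of the underlying $(X,T)$ and the fact that the labeled floors exhaust each column, this shows each $S$-orbit meets every floor of every $\mathcal{P}(k)$ and is therefore dense, giving minimality of $S$.

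The main obstacle I anticipate is making the orbit correspondence fully rigorous rather than merely suggestive: one must check that the symbol-label sequence really determines the $S$-orbit faithfully and that no collapsing occurs, i.e., that distinct $S$-orbits within a $T$-orbit correspond to genuinely distinct shift-orbits in $X^{\sigma}$. This uses the permutation property (Proposition that each $\pi^{(k)}_i$ is a permutation) to guarantee the label propagation is bijective and reversible in both directions along the $T$-orbit, together with Moss\'{e}'s uniqueness of $\theta$-decompositions to ensure the symbol-label sequence is well-defined. Once the correspondence is pinned down, translating ``every $S$-orbit is dense'' into ``every symbol-label word occurs'' and invoking the subshift minimality criterion is routine.
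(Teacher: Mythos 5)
Your overall architecture matches the paper's more than it differs: both directions reduce to the equivalence between ``every symbol $(i',\ell')$ occurs in $\sigma^r(i,\ell)$ for large $r$'' and ``every $S$-orbit meets every floor of every $\mathcal{P}(k)$ with every label,'' and your treatment of the direction ($\sigma$ primitive $\Rightarrow$ $S$ minimal) is essentially the paper's: a $\theta^{k+r}$-block contains all $\theta^k$-blocks with all labels, so every $S$-orbit is dense. The genuine soft spot is the converse, where you invoke as a ``standard fact'' that minimality of a substitution subshift is equivalent to primitivity. Only the forward implication is standard; the converse is false in general: the substitution $a \mapsto aba$, $b \mapsto b$ generates the minimal (periodic) orbit of $(ab)^{\infty}$ without being primitive. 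Your parenthetical about growing word-lengths identifies the right extra hypotheses, but the implication then still has to be \emph{proved}, and its proof is exactly the content the lemma demands: one needs (i) that every letter $(i,\ell)$ genuinely occurs in the symbol-label sequence of some point (the floors are nonempty), (ii) that $|\sigma^k(i,\ell)| = |\theta^k(i)| \to \infty$ by primitivity of $\theta$, and (iii) uniform recurrence from minimality of $S$ --- by compactness there is a single $N$ such that every orbit block $\mathcal{O}(S,x,N+1)$ meets all floors of $\mathcal{P}(1)$ with all labels --- so that choosing $r$ with $|\sigma^r(i,\ell)| > MN$, where $M = \sup_{x\in X} p(x)$, forces every symbol to appear in every $\sigma^r(i,\ell)$. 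This short estimate is precisely the paper's proof of that direction; as written, your appeal to a false general equivalence is the missing step.

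Two smaller remarks. First, the paper never treats the symbol-label subshift as a dynamical system at all: the shift on symbol-label sequences advances one whole $\theta$-block at a time, so it is not conjugate to $S$ (it is closer to an induced map of $S$ on a labeled base), and the paper sidesteps this bookkeeping entirely by using only the language identity already established in the text (the language generated by $\sigma$ is the set of symbol-label sequences of points of $X$). Your detour through minimality of the symbol-label subshift is viable --- ``every word of the language occurs in every point'' does imply minimality of a subshift by the orbit-closure argument --- but it buys nothing over the direct estimate. Second, the obstacle you anticipate, faithfulness of the orbit correspondence (that distinct $S$-orbits within a $T$-orbit yield distinct shift-orbits), is not needed anywhere in this lemma: both directions are pure density and occurrence statements, so no injectivity of the correspondence is required.
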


\begin{proof}
	Suppose $\sigma$ is primitive and let $x$ be a point in $X$. 
	There is a power $r$ such that for all pairs of symbols $(i_j, \ell_j), (i_j', \ell_j')$
	the symbol $(i_j', \ell_j')$ appears in the word $\sigma^r(i_j,\ell_j)$. 
	Consider the decomposition into $\theta^{k+r}$ words. 
	Within each $\theta^{k+r}$ word are all possible
	$\theta^{k}$ words with all possible labels. Thus the $S$-orbit of $x$ intersects all 
	floors of $\mathcal{P}(k)$. Because this is true for all $k$, the $S$-orbit of $x$ is dense. 
	
	Suppose $S$ is minimal. There is an $N$ such that any $S$-orbit block 
	$\mathcal{O}(S,x,N+1)$ intersects all floors of $\mathcal{P}(1)$ with all labels. 
	Set $M = \sup_{x \in X} p(x)$. 
	Let $r$ be an integer such that $|\sigma^r(i,\ell)|>MN$ for all $(i,\ell)$. Then for all 
	$(i',\ell')$, $(i',\ell')$ must appear in $\sigma^r(i,\ell)$.
\end{proof}

Let us return now to Example \ref{ex-orbnum2}. Here one can check that 
$\pi_0^{(k)} = id$ and $\pi_1^{(k)}$ is the permutation $0 \leftrightarrow 1$ for 
all $k \geq 1$. Thus the substitution $\sigma$ in this case is given by 
\begin{align*}
\sigma:(0,0) & \mapsto (0,0)(0,0)(1,0)(1,1) \\
\sigma:(0,1) & \mapsto (0,1)(0,1)(1,1)(1,0) \\
\sigma:(1,0) & \mapsto (0,0)(0,0)(1,0)(0,1)(1,1)(1,0)\\
\sigma:(1,1) & \mapsto (0,1)(0,1)(1,1)(0,0)(1,0)(1,1).
\end{align*}
Since $\sigma(1,0)$ contains all four symbols and all $\sigma$-words contain the 
symbol $(1,0)$, this substitution is primitive and therefore the speedup $S$ is minimal. 

\subsection{$T$-coboundaries vs. $S$-coboundaries}
Using the above definitions and notation, we are able to give an explicit example of a substitution minimal Cantor system $(X,T)$ and a bounded speedup $T \pleadsto S$ such that
$p(x)-2$ is a $T$-coboundary but not an $S$-coboundary. 

Consider the following substitution example on $\mathcal{A}=\{0,1\}$
\begin{center}
	\hspace*{\fill}
	$\theta:0\mapsto 00011\hfill \theta: 1\mapsto 001$.
	\hspace*{\fill}
\end{center}
This is a primitive, proper, aperiodic substitution and $(X^{\theta},T)$ 
a minimal Cantor system with respect to the shift map $T$. 
Set $A = \{x : x[0,4]=00011 \}$. 
Define $p:X\rightarrow\Z^{+}$ as follows
$$p(x)=\begin{cases}
3 & \text{ if $x \in A$}\\
1 & \text{ if $x \in TA$}\\
2 & \text{ otherwise}
\end{cases}$$
Once again in this case, we see that 
$$p(x) = 2 + g(x) - g(T^{-1}x)$$
where $g(x)$ is the indicator function of $A$. 
Morevoer, $\pi_0^{(k)}=id$ 
and $\pi_1^{(k)}$ is the permutation $0 \leftrightarrow 1$ for 
all $k \geq 1$. The associated substitution $\sigma$ is given 
by 
\begin{align*}
\sigma:(0,0) & \mapsto (0,0)(0,0)(0,0)(1,0)(1,1) \\
\sigma:(0,1) & \mapsto (0,1)(0,1)(0,1)(1,1)(1,0) \\
\sigma:(1,0) & \mapsto (0,0)(0,0)(1,0)\\
\sigma:(1,1) & \mapsto (0,1)(0,1)(1,1).
\end{align*}
One can again check that $\sigma$ is primitive so 
$S:x \mapsto T^{p(x)}(x)$ is minimal. 

We will show that $f(x)=p(x)-2$ is not an $S$-coboundary, i.e., that 
$f(x)$ is not of the form $h(x)-h(Sx)$ where $h \in C(X,\mathbb{Z})$. 
We do so by showing that
$$
\sup_{n}\sum_{i=0}^{n-1}f(S^{i}z)=\infty.
$$
where $z$ is the fixed point of the substitution $\theta$. 
In other words, if for all $k \geq 1$, in the decomposition of $z$ into $\theta^k$-words, 
$n_0=0$ and $z[0,n_1)=\theta^k(0)$.  

Consider the integer sequence $\{l_{k}\}$ where $l_{k}$ is the number of $S$-steps it takes for $z$ to traverse the first $\theta^{k}(0)$-block. 
Formally, $l_{k}$ is the minimum integer such that
$$
\sum_{i=0}^{l_{k}-1}p(S^{i}z)\ge|\theta^{k}(0)|.
$$
One can check: $l_{1}=2,\, l_{2}=9,\, l_{3}=40$. Further, one can check:
\begin{align*}
\sum_{i=0}^{l_{1}-1}f(S^{i}x)&=1\\
\sum_{i=0}^{l_{2}-1}f(S^{i}x)&=3\\
\sum_{i=0}^{l_{3}-1}f(S^{i}x)&=9\\
\end{align*}
Inductively, we would like to see that $\sum_{i=0}^{l_{k}-1}f(S^{i}x)=3^{k-1}$, which would prove the claim. This follows because in order to traverse a $\theta^{k+1}(0)-$block, the point $z$ traverses three $\theta^{k}(0)-$blocks and two $\theta^{k}(1)-$blocks. 
Each of the $\theta^{k}(0)-$blocks contributes $3^{k-1}$ and the $\theta^{k}(1)-$blocks contribute nothing. 

The example above is particularly relevant as it demonstrates an example where 
$p(x) - 2$ is a $T$-coboundary but not an $S$-coboundary. If $p(x)-c$ is an 
$S$-coboundary of the form $h(x)-h(Sx)$, then in fact $T^c$ and $S$ are conjugate via the map 
$T^{h(\cdot)}$. We can further see that in the above example $T^2$ and $S$ are not conjugate
by any map as they have different associated dimension groups. 

\subsection{A minimal 
bounded speedup of a substitution is a substitution}

We will use in this section a characterization of minimal Cantor substitution systems as \emph{expansive} and \emph{self-induced}.

Let $(X,T)$ be a minimal Cantor system and let $A$ be a proper clopen subset 
of $X$. Due to the minimality of $T$ for every $x \in A$ there is a first
return time $r(x) = \min \{n>0 : T^n(x) \in A\}$. We may then consider 
the \emph{induced map} $T_A:A \to A$ defined by $T_A(x) = T^{r(x)}(x)$. 
As it turns out, the induced system $(A,T_A)$ is also a minimal Cantor system. 

\begin{definition}
	We say that a minimal Cantor system $(X,T)$ is \emph{self-induced} if 
	$(X,T)$ is conjugate to $(A,T_A)$ where $A$ is a proper clopen subset of $X$ and
	$T_A:A \to A$ is the induced map on $A$. 
\end{definition}

It is not difficult to see that a minimal substitution system $(X^{\theta},T)$
is self-induced via the extension of the map $\theta$ to sequences in $X^{\theta}$. 
In \cite{DOP}, the converse is proven. 

\begin{theorem}
	Let $(X,T)$ be an expansive, self-induced minimal Cantor system. Then 
	$(X,T)$ is conjugate to a substitution system $(X^{\theta},T)$ where
	$\theta$ is a primitive, aperiodic, proper substitution. 
\end{theorem}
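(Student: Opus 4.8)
The plan is to produce a primitive, aperiodic substitution $\theta$ with $(X,T)$ conjugate to $(X^{\theta},T)$; properness then comes for free, since the earlier theorem upgrades any such $\theta$ to a proper, primitive, aperiodic substitution generating a conjugate system. The first move is to cash in expansiveness through the quoted theorem that an expansive homeomorphism of a Cantor set is conjugate to a subshift: so I may assume outright that $(X,T)$ is a minimal subshift over a finite alphabet $\mathcal{A}$, and that the self-inducing conjugacy is a sliding block code. This is the clean way expansiveness enters, and it is exactly the feature that will ultimately separate this situation from non-expansive self-induced systems such as odometers.

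By self-induction there is a proper clopen $A \subset X$ and a homeomorphism $\phi:X\to A$ with $\phi\circ T = T_A\circ\phi$. Shrinking $A$ to a cylinder if necessary, minimality makes the set $R$ of first-return words to $A$ finite, and coding each point of $A$ by its bi-infinite sequence of return words realizes $(A,T_A)$ as a subshift over the alphabet $R$, the \emph{derived subshift} $\mathcal{D}(X)$. Composing with $\phi$ yields a conjugacy from $(X,T)$ to $\mathcal{D}(X)$ with the shift. The key structural point is \emph{stationarity}: because the derived system is conjugate to the original via the single fixed map $\phi$, iterating the construction applies the same rule at every level, in contrast to the merely linearly recurrent case where one controls only finitely many distinct derived systems. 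Thus a single substitution rule should govern all scales.

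To package this as a genuine substitution on a single alphabet I would arrange $\phi$ to be letter-to-letter. Each return word is literally a word over $\mathcal{A}$, giving $\lambda:R\to\mathcal{A}^*$; and after passing to a higher-block presentation and invoking Moss\'{e}-type recognizability of the subshift, the conjugacy $\phi$ can be taken to identify $\mathcal{A}$ with $R$ symbol-wise. Then $\theta := \lambda\circ\phi:\mathcal{A}\to\mathcal{A}^*$ is a substitution, and by construction every $x\in X$ decomposes at each level into $\theta$-words, whence $X = X^{\theta}$. I expect this recoding to be the main obstacle: one must check that the return-word decomposition can be made compatible, level-by-level, with $\phi$ so that the whole tower collapses to one substitution, and that the resulting $X^{\theta}$ is exactly $X$ rather than a proper subsystem. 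This is precisely where expansiveness is indispensable, for a self-induced system that is not expansive (an odometer, say) produces only a degenerate $\theta$ whose subshift is strictly smaller than $X$, the symbolic name failing to recover the point; expansiveness, through recognizability, is what forces the coding to be injective.

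Finally, primitivity of $\theta$ follows from minimality of $(X,T)$, since every return word recurs in every sufficiently long factor and the lengths $|\theta^{k}(a)|$ tend to infinity, and aperiodicity follows because $X$ is an infinite Cantor set and hence has no periodic points. Applying the earlier theorem to the primitive, aperiodic $\theta$ produces a proper, primitive, aperiodic substitution $\tau$ with $(X^{\theta},T)$, and therefore $(X,T)$, conjugate to $(X^{\tau},T)$, which completes the proof.
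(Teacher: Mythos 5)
The paper itself gives no proof of this statement; it is quoted from \cite{DOP}, so your attempt can only be measured against the argument there, and your skeleton does match it in spirit: use expansiveness to realize $(X,T)$ as a minimal subshift, code the self-induced copy $(A,T_A)$ by return words to get a derived subshift conjugate to $(X,T)$, exploit stationarity of the single conjugacy $\phi$ to get one substitution rule at all levels, and then invoke the quoted recoding theorem of Durand--Host--Skau type to upgrade to a proper substitution (that last reduction, and the primitivity/aperiodicity claims, are fine). One slip before the main issue: you may not ``shrink $A$ to a cylinder,'' because the self-inducing hypothesis is tied to the specific clopen set $A=\phi(X)$ --- the induced map on a smaller clopen set is a different system, generally not conjugate to $(X,T)$. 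Fortunately shrinking is unnecessary: for any clopen $A$ in a minimal subshift the first-return time is continuous, hence bounded, so the set $R$ of return words is automatically finite.

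The genuine gap is at the decisive step, where you collapse the picture to a substitution on one alphabet. Your mechanism --- pass to a higher-block presentation, invoke ``Moss\'e-type recognizability of the subshift'' to make $\phi$ letter-to-letter, and set $\theta=\lambda\circ\phi$ --- is circular and does not type-check. Circular, because Moss\'e's recognizability theorems concern subshifts already known to arise from primitive substitutions, which is precisely what is being proved; nothing licenses applying them to an abstract minimal subshift. And after higher-blocking, the domain alphabet becomes $\mathcal{A}'$ while each $\lambda(r)$ is a word over the original $\mathcal{A}$, so $\lambda\circ\phi:\mathcal{A}'\to\mathcal{A}^*$ is not an endomorphism of a single alphabet; repairing this mismatch is the entire content of the step you defer. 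The correct device, and the one underlying \cite{DOP}, defines $\theta$ on the return-word alphabet $R$ itself by iterating the self-inducing map: set $A_1=\phi(X)$ and $A_{n+1}=\phi(A_n)$, so that $A_{n+1}\subsetneq A_n$, the induced map of an induced map is again the induced map, and $\phi$ carries return words to $A_n$ onto return words to $A_{n+1}$; since $A_{n+1}\subset A_n$, each return word to $A_{n+1}$ spells canonically as a concatenation of return words to $A_n$, and because one fixed $\phi$ is used at every level this rule is level-independent, yielding $\theta:R\to R^*$ with no letter-to-letter recoding at all. Note also that injectivity of the return-word coding (hence $X^{\theta}$ conjugate to $X$ rather than a proper factor) requires no recognizability: concatenating the return words literally reconstructs the point, and this is the one place expansiveness enters --- consistent with your own observation that for an odometer the symbolic name fails to recover the point.
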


We will use the theorem above to show the main theorem in this section, that
a bounded speedup of a substitution system is conjugate to a substitution system. 

\begin{theorem} \label{subspeed}
Suppose $(X^{\theta}, T)$ is a minimal substitution system associated with the proper, primitive substitution $\theta$.  
If $T \pleadsto S$ where $S:X^{\theta} \to X^{\theta}$ is minimal, 
then $(X^{\theta}, S)$ is a substitution system.
\end{theorem}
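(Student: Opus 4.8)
The plan is to apply the characterization of substitution systems as expansive, self-induced minimal Cantor systems, which is the theorem cited immediately before the statement. Since $(X^\theta, S)$ is assumed minimal, it suffices to verify two properties: that $S$ is expansive and that $(X^\theta, S)$ is self-induced. The expansiveness is already handed to us: by Lemma~\ref{lem:exp}, a bounded speedup of an expansive map is expansive, and $(X^\theta, T)$ is expansive because it is conjugate to a subshift. So the entire weight of the proof rests on producing a proper clopen set $A \subset X^\theta$ such that the $S$-induced system $(A, S_A)$ is conjugate to $(X^\theta, S)$ itself.

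First I would exploit the Kakutani-Rokhlin partition machinery developed earlier in Section~\ref{sub}. Recall that for the substitution $\theta$ we have the nested sequence of partitions $\mathcal{P}(k) = \{T^j A_i(k) : i \in I, 0 \leq j < l_i(k)\}$, and that the base set $B(k) = \cup_i A_i(k)$ shrinks to a point. The key structural fact is that $\theta$ itself realizes $(X^\theta, T)$ as self-induced: the return map of $T$ to $B(1)$ reproduces the substitution. The plan is to find an analogous clopen set adapted to $S$. Using the orbit-block labeling $\mathcal{L}_k$ and the associated symbol-label substitution $\sigma$ on the alphabet $I \times C$, I would argue that the $S$-dynamics on $X^\theta$ is itself conjugate to the subshift generated by $\sigma$ (this is essentially the content of the lemma showing $\sigma$ is primitive iff $S$ is minimal, together with the fact that the language generated by $\sigma$ is exactly the set of symbol-label sequences). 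Once $(X^\theta, S)$ is identified with the $\sigma$-subshift, and $\sigma$ is a primitive substitution, we are nearly done provided $\sigma$ can be taken proper or at least aperiodic; the expansive self-induced route sidesteps the need for properness directly.

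The cleanest path, which I would pursue, is to bypass reconstructing $\sigma$ explicitly and instead verify self-inducedness abstractly. Concretely, I would choose a level-$k$ base set $B(k)$ (clopen, proper) and show that the first-return map of $S$ to $B(k)$ is conjugate to the $S$-action on a coarser level. The intuition is that one application of the substitution corresponds to inducing on the base; since the speedup $S$ permutes floors within each column according to the fixed permutations $\pi_i$ (Lemma~\ref{constperm} lets us take all $\pi^{(k)}$ equal), the return map of $S$ to $B(k)$ should again be a copy of $S$ acting on a labeled substitution system with the same $\sigma$. Because $\sigma$ is a substitution with the same combinatorial structure at every level, the induced system on $B(k)$ is conjugate to $(X^\theta, S)$, establishing self-inducedness. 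Combined with expansiveness, the theorem from \cite{DOP} then yields that $(X^\theta, S)$ is conjugate to a substitution system.

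The main obstacle I anticipate is the bookkeeping that certifies that the $S$-return map to the chosen base $B(k)$ is genuinely conjugate to $S$ on the whole space, rather than merely to some induced subsystem — in particular, confirming that the labels and the substitution $\sigma$ are reproduced faithfully under inducing, and that the set $B(k)$ is a \emph{proper} clopen subset (which holds since the base sets shrink to a point). A subtler point is that the natural substitution $\sigma$ governing $(X^\theta, S)$ need not be proper even though $\theta$ is; the expansive-self-induced characterization is precisely what lets us avoid establishing properness by hand, so I would lean on that theorem rather than trying to exhibit a proper substitution directly. The cleanest statement of the argument is therefore: $(X^\theta,S)$ is expansive (by Lemma~\ref{lem:exp}), minimal (by hypothesis), and self-induced (via the return map of $S$ to a KR-base $B(k)$), hence conjugate to a substitution system.
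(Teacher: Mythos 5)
Your overall frame is exactly the paper's: show $(X^{\theta},S)$ is expansive (Lemma \ref{lem:exp}), minimal (hypothesis), and self-induced, then invoke the characterization of expansive self-induced minimal Cantor systems from \cite{DOP}. The expansiveness step is correct. The gap is in the self-induced step, and it sits precisely where you wave at ``bookkeeping.'' First, your intermediate identification of $(X^{\theta},S)$ with the subshift generated by $\sigma$ is false: one shift of a symbol-label sequence corresponds to crossing an entire $\theta$-word, i.e., to many applications of $S$, not to one. The correct subshift model of $(X,S)$ (the one used in the proof of Lemma \ref{lem:exp}) has alphabet $\{x[0,p(x)) : x \in X\}$; the paper uses $\sigma$ only as a minimality detector (primitivity of $\sigma$ if and only if $S$ is minimal), never as a conjugate model of $(X,S)$. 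Were your identification true, the theorem would be immediate, which should itself be a warning sign.

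Second, and more seriously, your inducing set is the wrong one. For $T$, inducing on the base $B(k)=\cup_i A_i(k)$ works because the $T$-return occurs once per column, which is exactly the self-induction by $\theta^k$; but an $S$-orbit meets $B(k)$ only at those column entries that occur at height exactly $0$, and these happen at irregular multi-column intervals governed by the permutations $\pi_i$. In Example \ref{ex-orbnum2}, for instance, an orbit leaving the base of a $1$-column enters each subsequent $0$-column at height $1$ and re-enters $B(1)$ only after the next $1$-column, so the return pattern depends on the desubstituted sequence; tracking this, $S_{B(1)}$ is naturally a bounded speedup of the desubstituted shift with a jump function different from $p$, and nothing forces it to be conjugate to $(X,S)$ --- indeed the paper's discussion of $T$-coboundaries versus $S$-coboundaries exhibits speedups of conjugate systems with the same orbit number that are not conjugate (distinguished by dimension groups). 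This is exactly why the paper does not induce on the base: it builds $U$ as a union of floors of $\mathcal{P}(2)$, choosing for each floor $T^jA_i(1)$ of $\mathcal{P}(1)$ a single floor $T^kA_i(2)$ with $k \in [r_j,r_{j+1})$ whose $\mathcal{L}_2$-label equals the $\mathcal{L}_1$-label of $T^jA_i(1)$, and then verifies $\varphi S = S_U \varphi$ in the two cases $p(x)+j < l_i(1)$ and $p(x)+j \geq l_i(1)$. The label-matching selection is the mechanism that makes the $S$-return dynamics on $U$ replicate the $S$-transitions among floors of $\mathcal{P}(1)$; your proposal supplies no analogue of it, so the self-inducedness --- the entire content of the theorem beyond Lemma \ref{lem:exp} --- remains unproven.
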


It remains to show that a bounded speedup of a substitution system is 
self-induced. That is, we will show that $(X^{\theta},S)$ is topologically 
conjugate to $(U,S_U)$ where $U \subset X^{\theta}$ is
a clopen subset of $X^{\theta}$. 

\begin{theorem}
	Suppose $(X^{\theta},T)$ is a substitution system given by a proper, primitive substitution $\theta$
and that $T \pleadsto S$ where
$S:X^{\theta} \to X^{\theta}$ is minimal. Then $(X^{\theta},S)$ is self-induced. 
\end{theorem}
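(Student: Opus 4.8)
The plan is to exhibit a proper clopen set $U\subset X^\theta$ together with a homeomorphism $\Psi$ conjugating $(X^\theta,S)$ to the induced system $(U,S_U)$, built from the self-similar substitution structure $\sigma$ developed above. The guiding analogy is the already-noted fact that $(X^\theta,T)$ is self-induced via the extension of $\theta$ to sequences; here the role of $\theta$ should be played by $\sigma$, and the crucial leverage is Lemma~\ref{constperm}, which makes the passage from level $k$ to level $k+1$ independent of $k$.

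First I would repackage the labeled partitions into $S$-adapted Kakutani--Rokhlin partitions. Using the labeling $\mathcal{L}_k$ and the fact that, within each column $i$ of $\mathcal{P}(k)$, the label-$\ell$ floors form a single $S$-orbit segment entered at the lowest such floor and exited from the highest, I obtain for each $k$ a KR-partition $\mathcal{Q}(k)$ for $(X^\theta,S)$ whose columns are indexed by the symbol-label pairs $(i,\ell)$ and whose base $\mathcal{B}(k)$ is the union of the lowest label-$\ell$ floors. One checks that $\{\mathcal{Q}(k)\}$ is nested and generates the topology, inherited from the corresponding properties of $\{\mathcal{P}(k)\}$, so that it is a legitimate generating sequence of KR-partitions for $S$. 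The decisive structural point is that, by Lemma~\ref{constperm}, the rule describing how the columns of $\mathcal{Q}(k+1)$ are assembled from those of $\mathcal{Q}(k)$ is exactly $\sigma$, the same at every level; thus the refinement data of $\{\mathcal{Q}(k)\}$ is stationary.

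Next I would use this stationarity to define $U$ and $\Psi$. Since $\theta$, and hence the induced level structure, is recognizable by the theorems of Moss\'e \cite{Mosse1,Mosse2}, every $x$ admits a unique decomposition of its symbol-label sequence into $\sigma$-blocks, and I would take $U$ to be the clopen set of points whose current $S$-position lies in the distinguished initial portion of its enclosing $\sigma$-block; the precise delineation of this portion is part of the construction. The map $\Psi:X^\theta\to U$ is then the $S$-analogue of the self-inducing map of $T$, now decorated with the label bookkeeping supplied by the constant permutations $\pi_i$. The requirement to be verified is that a single step of $S$ upstairs corresponds to the first return $S_U$ downstairs, that is, $\Psi\circ S=S_U\circ\Psi$.

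The step I expect to be the main obstacle is precisely this last verification. Unlike the $T$-case, where desubstitution trivially turns a word of length $l_i$ back into a single symbol, the $S$-dynamics is nontrivial inside each word: the column heights (traversal times) of $\mathcal{Q}(k)$ are generally larger than $1$, so naively inducing $S$ on a word-boundary set, such as $\mathcal{B}(k)$, returns only the symbol-label shift rather than $S$ itself. Reconciling the $S$-phases---equivalently, matching first-return times across the two levels so that $(U,S_U)$ recovers the full tower and not merely its base---is the delicate point, and it is here that I would exploit properness of $\theta$ to pin down the entry and exit behaviour uniformly across columns, via condition~\ref{lowfloors}, together with the constancy of $\sigma$ from Lemma~\ref{constperm}. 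Once the intertwining is established, $U$ is a proper clopen subset and $(X^\theta,S)\cong(U,S_U)$, so $(X^\theta,S)$ is self-induced, as desired.
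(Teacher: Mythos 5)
Your plan identifies the right ingredients---the labeling $\mathcal{L}_k$, the stationarity of the permutations from Lemma~\ref{constperm}, and the symbol-label substitution $\sigma$---and you have correctly diagnosed where the difficulty lies: inducing naively on a word-boundary set such as your base $\mathcal{B}(k)$ recovers only the symbol-label shift, not $S$. But having named the obstacle, you do not overcome it. The set $U$ is never actually defined (``the precise delineation of this portion is part of the construction'' defers exactly the step that constitutes the proof), the map $\Psi$ is never specified beyond being ``the $S$-analogue of the self-inducing map,'' and the intertwining $\Psi\circ S = S_U\circ\Psi$ is stated as a requirement rather than verified. As written, the proposal is a correct strategy outline with the load-bearing construction missing, so it does not yet prove the theorem.

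The paper's resolution of your phase-matching problem is worth internalizing, because it is concrete and short. One defines a floor-selection map $\varphi:\mathcal{P}(1)\to\mathcal{P}(2)$, column by column: in the $i$th column of $\mathcal{P}(2)$ there are exactly $l_i(1)$ floors contained in the base $B(1)=\cup_i A_i(1)$, at heights $0<r_1<\cdots<r_{l_i(1)-1}$; one sets $\varphi(A_i(1))=A_i(2)$ and, for $1\le j<l_i(1)$, chooses $\varphi(T^jA_i(1))$ to be a floor $T^kA_i(2)$ with $k\in[r_j,r_{j+1})$ whose $\mathcal{L}_2$-label equals the $\mathcal{L}_1$-label of $T^jA_i(1)$. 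Then $U$ is the union of the chosen floors---exactly \emph{one} representative per level-one floor, placed inside the window of the level-two column corresponding to the $j$th position of the $\theta$-block. This one-representative-with-matching-label choice is precisely what makes a single $S$-step upstairs correspond to a single $S_U$-return downstairs: within a label class in a column, consecutive $\varphi$-images are consecutive returns to $U$, which handles the case $p(x)+j<l_i(1)$, while the top-of-column case is governed by the permutation $\pi_i$ and the properness of $\theta$ (your condition~\ref{lowfloors}), since the exit floor with label $\ell$ is sent to the lowest floor with label $\pi_i(\ell)$ in the next column at both levels. Extending $\varphi$ to points via expansiveness then yields the conjugacy $(X^{\theta},S)\cong(U,S_U)$. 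Note also that the heavy machinery you invoke---recognizability via Moss\'e and a full generating sequence of $S$-adapted KR-partitions $\mathcal{Q}(k)$---is not needed: two levels, $\mathcal{P}(1)$ and $\mathcal{P}(2)$, suffice once the selection map is in hand.
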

\begin{proof}
	We will define a map  $\varphi: \mathcal{P}(1) \to \mathcal{P}(2)$ and then 
	let $U$ be the union of the elements of $\mathcal{P}(2)$ which are in the range of
	$\varphi$.
	
	Fix $i$ and consider the $i$th column of $\mathcal{P}(2)$, $\{T^j A_i(2) : 0 \leq j < l_i(2)\}$. 
	Let $B(1) = \cup A_i(1)$, the base of the tower partition $\mathcal{P}(1)$. 
	Among these sets in the $i$th column of $\mathcal{P}(2)$, there are precisely 
	$l_i(1)$ which are a subset of $B(1)$. Let 
	$0 < r_1 < r_2 < \cdots < r_{l_i(1)-1}$ be the heights of these floors. 
	
	Set $\varphi(A_i(1))=A_i(2)$. For $j \in [1,l_i(1))$, 
	set $\varphi(T^j A_i(1))$ to be a floor $T^k A_i(2)$ with 
	$k \in [r_j,r_{j+1})$ 
	which has an $\mathcal{L}_2$-label equal to the $\mathcal{L}_1$-label of $T^j A_i(1)$. 
	
	We can extend $\varphi$ to a map on points using the expansiveness of $T$, which
	we also denote by $\varphi$. Let us check that $\varphi S(x)=S_U \varphi(x)$.
	There are two cases depending upon the element $T^jA_i$ of 
	$\mathcal{P}(1)$ that contains $x$. Suppose 
	$x \in T^j A_i(1)$; then either $p(x) + j < l_i(1)$ or otherwise. 
	
	\noindent \underline{$p(x) + j < l_i(1)$}
	Let $F$ denote the floor of $\mathcal{P}(1)$ containing $x$.
	In this case, $SF$ is the next higher floor $E$ in the $i$th column with the same 
	$\mathcal{L}_1$ label as $F$. The floors $\varphi(F)$ and $\varphi(E)$ have the same 
	$\mathcal{L}_2$ labels, and no images $\varphi$ between them have this label. 
	Therefore, $S_U\varphi(F)=\varphi(E)=\varphi S(F)$ and for all $x \in F$, 
	$S_U\varphi(x)=\varphi S(x)$.
	
	\noindent \underline{$p(x) + j \geq l_i(1)$}
	Let $F$ denote the floor of $\mathcal{P}(1)$ containing $x$.
	In this case, if $F$ has label $\ell$ then it is the highest level in the 
	$i$th column of $\mathcal{P}(1)$ with this label. 
	Thus $\varphi(F)$ is the highest level in the $i$th column of
	$\mathcal{P}(2)$ which is a subset of $U$ and is labeled $\ell$. 
	
	Let $x \in F$, with $S(x) \in E$ in the $m$th column of $\mathcal{P}(1)$. 
	The floor $E$ has label $\pi_i(\ell)$ and is the lowest floor in its column
	of $\mathcal{P}(1)$ with this label. The next $S$-entry of $\varphi(x)$ into $U$ 
	is in the first floor from the $m$th column of $\mathcal{P}(2)$ with $\mathcal{L}_2$-label
	$\pi_i(\ell)$, and therefore this is
	the set $E$. Thus $\varphi S (x) = S_U \varphi(x)$. 
\end{proof}

\end{document}